\documentclass[a4paper,reqno]{amsart}  
\usepackage{graphicx}
\usepackage[margin=3.4cm]{geometry}
\usepackage[english]{babel}
\usepackage{amsfonts, color}
\usepackage{amstext, amssymb, amsmath}
\usepackage{amssymb}
\usepackage{algorithm, algorithmic}
\usepackage{soul}
\usepackage{mathrsfs}

\newcommand{\R}{\mathbb{R}}

\theoremstyle{plain}
\begingroup
\newtheorem{theorem}{Theorem}[section]
\newtheorem{lemma}[theorem]{Lemma}
\newtheorem{proposition}[theorem]{Proposition}

\endgroup

\theoremstyle{definition}
\begingroup
\newtheorem{defn*}[theorem]{Definition}
\newtheorem{remark}[theorem]{Remark}
\newtheorem{example}[theorem]{Example}
\endgroup

\begin{document}


\title[A piecewise conservative method for   unconstrained convex optimization]
{A piecewise conservative method for unconstrained convex optimization}

\author[ A. Scagliotti and P. Colli Franzone]{Alessandro Scagliotti and  Piero Colli Franzone}


\address[A.~Scagliotti]{Scuola Internazionale Superiore di Studi Avanzati, Trieste, Italy}
\email{ascaglio@sissa.it}
\address[P.~Colli~Franzone]{Dipartimento di Matematica, Universit\`a di Pavia, Italy}
\email{piero.collifranzone@unipv.it }

\begin{abstract}
We consider a continuous-time optimization method
based on a dynamical system, where 
a massive particle starting at rest moves in the 
conservative force field generated by the objective 
function, without any kind of friction.
We formulate a restart criterion 
based on the mean dissipation of the kinetic
energy, and we prove a global convergence
result for strongly-convex functions. 
Using the Symplectic Euler discretization
scheme, we obtain an iterative optimization 
algorithm. 
We  have considered a discrete mean dissipation
restart scheme, but we have also
 introduced a new restart procedure based on
ensuring at each iteration a decrease of
the objective function greater than the one
achieved by a step of the classical gradient method.
For the discrete conservative algorithm,
this last restart criterion is capable of
guaranteeing a qualitative
 convergence result.
We apply the same restart scheme to the
Nesterov Accelerated Gradient (NAG-C), and we
use this restarted NAG-C as benchmark in the
numerical experiments.
In the smooth convex problems considered,
our method shows a faster convergence rate than
the restarted NAG-C.
We propose an extension of our discrete conservative
algorithm to composite optimization: in
the numerical tests involving non-strongly convex
functions with $\ell^1$-regularization,
 it has better
performances than the well known efficient
Fast Iterative Shrinkage-Thresholding Algorithm,
 accelerated with an adaptive restart scheme.
\keywords{Convex optimization,
accelerated first-order optimization, 
restart strategies, conservative dynamical model.}

\end{abstract}

\maketitle

\begin{section}{Introduction}\label{sec:Int}
Convex optimization is of primary importance
in many fields of Applied Mathematics.
In this paper we are interested in unconstrained
minimization problems of the form
\begin{equation*}
\min_{x \in \R^n}f(x),
\end{equation*}
where $f:\R^n \to \R$ is a smooth convex
function. We will further assume that
$\nabla f$ is Lipschitz-continuous and that
$f$ is strongly convex.
The simplest algorithm for the numerical 
resolution of this minimization problem is the
classical gradient descent.
In the second half
of the last century
other important first-order algorithms were
introduced in order to speed up the
convergence of the gradient descent:
Polyak proposed his
\textit{heavy ball method} 
(see \cite{P64}, \cite{P87}), 
and Nesterov introduced a new class of 
\textit{accelerated gradient descent methods}
(see \cite{N83}, \cite{N18}). 
For a complete introduction to the subject,
we refer the reader to \cite{B14} and \cite{BV04}.

 The approach
of blending the study of optimization
methods with Dynamical Systems considerations
has been fruitfully followed in several
recent works, where Dynamical Systems tools
were employed to study existing optimization
methods and to introduce new ones:
in \cite{SBC} the authors derived an ODE for
modeling the Nesterov Accelerated Gradient
algorithm; in \cite{SJ18} and \cite{SJ19} the authors
studied accelerated methods (Nesterov and Polyak)
through {\it high-resolution} ODEs. Other
contributions in this direction come from
\cite{A16} and \cite{A18}.
Almost all the
ODEs obtained in the aforementioned papers can 
be reduced to the form
\begin{equation} \label{eq:cons_pert}
\ddot{x} + \nabla f(x) = -B(x,t) \dot{x},
\end{equation}where $B(x,t)$ is a symmetric
positive definite matrix, 
possibly depending on $t$. 
The term  $-B(x,t)\dot x$ in \eqref{eq:cons_pert}
represents the contribution of a
generalized viscous friction.
If, for example, the
matrix $B$ does not depend 
on the time $t$, 
then the convergence of any solution
of \eqref{eq:cons_pert} to the minimizer of $f$
is guaranteed by the dissipation of the total
mechanical energy
 $H = \frac12 |\dot{x}|^2 + f(x)$,
which plays the role of Lyapunov function.
Indeed, by differentiation of the energy $H$ 
along any solution of \eqref{eq:cons_pert}, we 
obtain
$
\frac{d}{dt}H(t) = -\dot x^T B(x) \dot x<0,
$
as long as $\dot{x} \neq 0$.
The choice of the matrix $B(x,t)$
is of primary importance
as shown in \cite{A18} and \cite{SBC}.

{
We recall that the Dynamical System approach 
{was} first undertaken in \cite{SBC}, where
{the authors proved that the Nesterov method
for non-strongly convex functions (NAG-C)
can be modeled by considering
the solutions of \eqref{eq:cons_pert},
with the dissipative term of the form 
$-B(x,t)\dot x =  -\frac3t \dot x$
and with initial velocity equal to zero.
Moreover, they proved that the objective
function achieves a decay $O(t^{-2})$ along
these curves. In order to avoid oscillations
of the solutions, which otherwise would slow down
the convergence, the authors introduced an adaptive
restart strategy that consists in resetting the
velocity equal to zero in correspondence of
local maxima of the kinetic energy $E_K:=\frac12 |
\dot x|^2$. In  \cite{SBC} it was shown that this 
adaptively restarted method  has a linear convergence 
rate when the objective function is strongly 
convex.}}
{In this paper we will  focus
on the conservative mechanical
ODE
\begin{equation} \label{eq:cons_ode_intr}
\ddot x + \nabla f (x) =0,
\end{equation}
and we will design a
piecewise conservative method based on
an adaptive restart strategy:
the resulting continuous-time algorithm achieves 
a linear convergence rate when the objective
function is strongly convex. 
As well as the aforementioned restarted method 
proposed in \cite{SBC}, our method
does  not make use of  the constant of strong 
convexity of the objective function. 
This is a relevant point, since in practice
estimating this quantity may be a very challenging
task. 

The presence of
viscosity friction in the 
dynamics studied in \cite{SBC}
yields to dissipate the kinetic energy
from the very beginning of the motion.
In alternative, if we consider 
dynamics \eqref{eq:cons_ode_intr} with initial 
velocity equal to zero
and starting point
$x(0)=x_0$ far from the minimizer $x^*$, it 
could be a good idea to let the 
system evolve without damping (i.e., conservatively)
 for an amount of time $\Delta T$,
so that the  solution
may be free to get closer 
to the minimizer, without being 
decelerated by the viscosity friction.
This is the idea that underlies 
the restarted  method that
we consider in this paper,
based  on the maximization of the
{\it mean dissipation} of the
kinetic energy as a stopping  criterion.

The introduction of this new restart strategy
is motivated by the fact that the maximization of the 
kinetic energy (employed in \cite{SBC})
is not suitable for the conservative
dynamics \eqref{eq:cons_ode_intr}.
Indeed, in \cite{SBC} the proof of the uniform upper
bound for the restart time (which is the cornerstone
of the linear convergence result) heavily relies
on the presence of the viscosity friction.
On the other hand, in the conservative dynamics,
the efficacy of the maximization of the
kinetic energy as restart criterion
 depends on the dimension
of the ambient space. 
In the one-dimensional case the solution
converges to a local minimizer
in a single restart iteration, as shown in
Section~\ref{sec:1d}.
Unfortunately, in the multi-dimensional case
 and for a general $f$, 
it is not possible to prove that a local
maximum of the kinetic energy 
is reached in a finite amount of time.
For this reason in \cite{TPL}, where the
conservative dynamics with maximization of the
kinetic energy was investigated, the authors
proved a linear convergence result for strongly
convex objective functions by assuming {\it a priori} 
the existence of a uniform upper
bound for the restart time.

The original restart strategy
(based on the {\it mean dissipation} of the kinetic
energy) that we develop
in Section~\ref{sec:alt} allows us to manage the
absence of viscosity friction in the system. 
Indeed, we can prove that the restart time is
always finite in the case that the objective function
is coercive.
Moreover, when dealing with strongly convex functions,
we prove that the restart time is uniformly
bounded, and hence we can strengthen the linear 
convergence result of \cite{TPL}. We also show that
the trajectory obtained has finite length.

In Section \ref{sec:discr} we derive a
discrete-time optimization algorithm by
applying the Symplectic Euler scheme
to \eqref{eq:cons_ode_intr}, yielding an update
rule of the form
$x_{k+1}= x_k -\alpha\nabla f(x_k) + (x_k - x_{k-1})$
where  $\alpha>0$ is the step-size. 

We introduce two restart criteria
for the discrete algorithm based
on the maximization of the mean dissipation of the
discrete kinetic energy,
referred as  RCM-mmd (Algorithm~1
and Algorithm~2). 
We also consider the case of restarting the discrete
conservative algorithm when the discrete kinetic
energy is maximized: this yields to the algorithm
investigated in \cite{SI}, where the authors
proved some partial results for quadratic
objective functions.

Finally, we design a restart criterion 
by imposing that, at each iteration, the decrease
of the objective function is greater or equal 
than the per-iteration-decrease achieved by the 
classical gradient descent method
with the same step-size.
We end up obtaining a discrete method referred as
RCM-grad (Algorithm~3), similar to
those described in \cite{TPL}.
Moreover, we observe that this
reasoning holds also for the Nesterov
Accelerated Gradient with the {\it gradient restart}
scheme (NAG-C-restart) proposed in \cite{OC}
and recently employed in \cite{K18}.
In other words, both RCM-grad and NAG-C-restart
achieve {\it at each iteration} an effective
acceleration of the gradient method.
This fact allows 
us, as a by-product, to prove a
{\it qualitative}
global convergence result for  RCM-grad:
{to the best of our knowledge, this
is the first convergence result for a method
based on the discretization of conservative
dynamics}.
This method is suitable both
for strongly and non-strongly convex optimization,
since it does not require an {\it a priori}
estimate of the strong convexity constant
of the objective function.
{We recall that other important 
contributions in this direction
for Nesterov-like restarted methods come from
\cite{N13}, \cite{FQ19}, \cite{FQ20}.}

In Section \ref{sec_num_test} 
{we have planned a quite extensive 
comparison to experimentally evaluate the performance 
of the convergence rate between the different 
discrete-time restart methods and
the  different versions of
the Nesterov Accelerated Gradient.}
In particular, we use as benchmark 
NAG-C-restart, since it was shown to
achieve high performances in both strongly and
non-strongly convex optimization (see \cite{OC}),
and it does not require the knowledge of the
constant of strong convexity.
We also give some insights on possible extensions
of our method for composite optimization problems.
We carry out numerical experiments
in presence of $\ell^1$-regularization and
we compare our method with the restarted FISTA
proposed in \cite{OC}.}
  
\end{section}

\begin{section}{One-dimensional case and
quadratic functions}\label{sec:1d}
{In this Section we introduce the
piecewise conservative method with restart 
strategy based on the maximization of the
kinetic energy $E_K:=\frac12 |\dot x|^2$.
More precisely, given a $C^{1,1}_L$-function
$f:\R^n \to \R$ 
(i.e., a function of class $C^1$ such that its 
gradient $\nabla f$ is Lipschitz-continuous
with constant $L>0$) to be minimized,
and given a starting point $x_0\in \R^n$, we consider
the solution of the ODE
\begin{equation} \label{eq:cons_ode_sec1d}
\ddot x + \nabla f = 0,
\end{equation}
with $x(0)=x_0$ and $\dot x(0) = 0$.
We recall that \eqref{eq:cons_ode_sec1d} preserves
the total mechanical energy $H(x,\dot x) :=
\frac12 |\dot x|^2 + f(x)$, and we observe that
at the initial instant the total mechanical
energy coincides with the potential energy.
Hence, during the motion,
part of the initial potential
energy is transformed into kinetic energy, and, if we
aim at minimizing $f$, a natural strategy could be
to wait until the kinetic energy attains a local
maximum. At this point, we reset the velocity 
equal to zero, and we repeat the whole procedure.
This continuous-time method was 
investigated in \cite{TPL}, where the authors proved
that, when $f$ is coercive and any critical point is
a minimizer, the set of the minimizers of $f$ is
globally asymptotically stable for the trajectories
of the restarted system.
However, the main difficulty in establishing
convergence rates lies in the estimate of the
restart time. As we show in this section, in the
one-dimensional case the system converges to a
local minimizer in a single iteration, i.e.,
in a finite amount of time.  
On the other hand, in the multi-dimensional case
the weakness of this criterion is inherent in
our inability to prove that the restart time is finite
for a generic strongly convex function.
In Section~\ref{sec:alt}
we modify the restart strategy in order to 
overcome this issue.}
{We use $\mathscr{S}^{1,1}_{\mu,L}$
to denote the 
class of functions in $C^{1,1}_L$ that are 
$\mu$-strongly convex, i.e., 
there exists $\mu >0$ such that
$x\mapsto f(x)-\frac{1}{2}\mu |x|^2$
is convex.}

We start by investigating the one-dimensional 
case, {when} $f: \R \to \R$ is a 
 {$C^{1,1}_L$} function. 
We consider the following Cauchy
problem:
\begin{equation} \label{eq:Cau_prob_1d}
\begin{cases}
\ddot{x} + f'(x) = 0, \\
x(0) = x_0, \\
\dot{x}(0) = 0,
\end{cases}
\end{equation}
and we reset the velocity equal to zero whenever the
kinetic energy $E_K = \frac12 |\dot x |^2$
achieves a local maximum.
We prove that this continuous-time method 
arrives to a local minimizer of $f$ at the
first restart.
  
\begin{proposition} \label{prop:1d_finite}
Let $f:\R \to \R$ be a 
{$C^{1,1}_L$} function 
and let us assume that $f$ is coercive.
For every $x_0 \in \R$ such that $f'(x_0) \neq 0$, 
let $x:[0,+\infty)\to \R$ be the solution of 
Cauchy problem \eqref{eq:Cau_prob_1d}.
Then, there exists $\bar{t} \in (0,+\infty)$ such 
that the kinetic energy function
$E_K: t \mapsto \frac12 | \dot{x}(t) |^2$ has a 
local maximum at $\bar{t}$.
Moreover, for every $\bar{t} \in (0,+\infty)$ 
such that $E_K$ has a local maximum at $\bar{t}$, 
the point
$x(\bar{t})$ is a local minimizer of $f$. 
\end{proposition}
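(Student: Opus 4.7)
The plan is to exploit the conservation of the total mechanical energy. Since $\dot x(0)=0$, along the solution of \eqref{eq:Cau_prob_1d} we have
\[
E_K(t)=\tfrac12 \dot x(t)^2 = f(x_0)-f(x(t))\ge 0,
\]
so local maxima of $E_K$ correspond exactly to local minima of $t\mapsto f(x(t))$. Without loss of generality I will assume $f'(x_0)>0$ (the other sign is symmetric). Then $\ddot x(0)<0$, hence $\dot x(t)<0$ and $x(t)<x_0$ for small $t>0$, while coercivity together with the bound $f(x(t))\le f(x_0)$ confines the trajectory to a compact set.

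For the existence of $\bar t$ I would introduce the turning point
\[
a:=\sup\{x<x_0:f(x)\ge f(x_0)\},
\]
which is finite by coercivity and satisfies $f(a)=f(x_0)$ with $f<f(x_0)$ on $(a,x_0)$. Pick a minimizer $x^*$ of $f$ on $[a,x_0]$; since $f(x^*)<f(a)=f(x_0)$, the point $x^*$ lies in the open interval $(a,x_0)$ and is therefore a local minimizer of $f$ on $\R$, with $f'(x^*)=0$. While $x(t)\in(x^*,x_0)$, the energy identity gives $|\dot x(t)|=\sqrt{2(f(x_0)-f(x(t)))}>0$ and, by continuity from $\dot x(0^+)<0$, one obtains $\dot x(t)<0$ throughout; hence the particle reaches $x^*$ at the finite time
\[
t^*=\int_{x^*}^{x_0}\frac{d\xi}{\sqrt{2(f(x_0)-f(\xi))}},
\]
where finiteness follows from $f(x_0)-f(\xi)\sim f'(x_0)(x_0-\xi)$ near $\xi=x_0$, producing only the integrable $(x_0-\xi)^{-1/2}$ singularity. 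Setting $\bar t:=t^*$ and using that $x^*$ is a local minimizer of $f$ on $\R$, a neighborhood of $t^*$ satisfies $f(x(t))\ge f(x^*)=f(x(t^*))$, whence $E_K(t)\le E_K(t^*)$ and $t^*$ is a local maximum of $E_K$.

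For the characterization, let $\bar t\in(0,\infty)$ be any local maximum of $E_K$; by energy conservation $f(x(t))\ge f(x(\bar t))$ for $t$ near $\bar t$. The main obstacle is promoting this along-the-trajectory inequality to genuine local minimality of $f$ at $x(\bar t)$, and this boils down to ruling out the degenerate case $\dot x(\bar t)=0$. If it occurred, then $E_K(\bar t)=0$, and the local-max property combined with $E_K\ge 0$ would force $E_K\equiv 0$ on an open interval around $\bar t$; then $(x(\bar t),0)$ is an equilibrium of the ODE, and Cauchy--Lipschitz uniqueness (applicable because $f\in C^{1,1}_L$ makes the right-hand side $-f'$ Lipschitz) propagates this equilibrium backwards to $t=0$, giving $f'(x_0)=0$, a contradiction. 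Hence $\dot x(\bar t)\ne 0$, so $t\mapsto x(t)$ is a local diffeomorphism around $\bar t$; transporting the inequality $f(x(t))\ge f(x(\bar t))$ through this diffeomorphism yields an open neighborhood of $x(\bar t)$ in $\R$ on which $f\ge f(x(\bar t))$, which completes the argument.
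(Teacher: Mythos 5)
Your proof is correct, and for the existence claim it takes a genuinely different route from the paper. The paper argues by contradiction: if $E_K$ had no local maximum it would be injective, hence strictly increasing, forcing $\dot x$ and $x$ to be monotone and bounded, hence convergent to limits $(x_\infty,v_\infty)$ with $v_\infty=0$, contradicting the strict growth of $E_K$ from $E_K(0)=0$. You instead argue constructively: you locate the turning point $a$, pick a minimizer $x^*$ of $f$ on $[a,x_0]$, show it is an interior (hence genuine) local minimizer, and prove the trajectory reaches it at the finite time $t^*=\int_{x^*}^{x_0}(2(f(x_0)-f(\xi)))^{-1/2}\,d\xi$ — which is exactly the paper's formula \eqref{eq:time_1d}, stated there only after the proposition. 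Your version buys an explicit description of where and when the first arrest occurs and connects directly to the time estimate of Proposition~\ref{prop:est_time_1d}; the paper's soft argument is shorter but non-constructive and works without singling out a direction of motion. For the second claim you follow essentially the paper's route (energy conservation turns a local maximum of $E_K$ into a local minimum of $t\mapsto f(x(t))$, then the inverse/implicit function theorem transports this to $f$), but you supply a justification the paper omits: the paper simply asserts $|\dot x(\bar t)|>0$, whereas your equilibrium-plus-backward-uniqueness argument is precisely what is needed to rule out the degenerate case. The only slightly terse spot is the claim that the particle actually exits $(x^*,x_0]$ rather than stalling inside; this closes easily (if $x(t)$ stayed in $(x^*,x_0)$ for all $t$ it would decrease monotonically to some $x_\infty\in[x^*,x_0)$ while $|\dot x(t)|\to\sqrt{2(f(x_0)-f(x_\infty))}>0$, which is absurd), and the change of variables then identifies the exit time with your integral.
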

The proof of Proposition~\ref{prop:1d_finite}
is postponed in Appendix~\ref{App_dim_1}.
Under the same assumptions and notations of Proposition~\ref{prop:1d_finite},
we can compute an explicit expression for the instant $\bar t$ when the solution of 
\eqref{eq:Cau_prob_1d} visits for the first time
the local minimizer  $x^* = x(\bar t)$.
We may assume 
that $x_0 <x^*$. For every $y \in [x_0, x^*]$ and for $t \in [0,\bar t ]$,
from the conservation of the total mechanical energy it follows that the solution
of \eqref{eq:Cau_prob_1d} visits the point $y$ with velocity 
$v_y = \sqrt{2(f(x_0)-f(y))}$. Thus, we obtain that
\begin{equation} \label{eq:time_1d}
\bar t = \int_{x_0}^{x^*}\frac{1}{\sqrt{2(f(x_0)-f(y))}}dy.
\end{equation} 
We observe that the hypothesis $f'(x_0) \neq 0$ guarantees that the singularity
at $x_0$ in \eqref{eq:time_1d} is integrable,
and thus that $\bar t$ is finite.

When the objective function $f: \R \to \R$ is 
{in $\mathscr{S}^{1,1}_{\mu,L}$}
we can give an upper bound to $\bar t$ that does not 
depend on the initial position $x_0$.
We prove this in the 
following Proposition.

\begin{proposition}\label{prop:est_time_1d}
Let $f:\R \to \R$ be a 
 function {in $\mathscr{S}^{1,1}_{\mu,L}$}.
Let $x^*$ be the unique minimizer of $f$ and let us choose $x_0 \in \R$
such that $x_0 \neq x^*$.
Let $t \mapsto x(t)$ be the solution of the Cauchy problem \eqref{eq:Cau_prob_1d}
and let $\bar t$ be the instant when the solution visits for the first time
the point $x^*$.
Then the following inequality holds:
\begin{equation} \label{eq:est_time}
\bar t \leq \frac{\pi}{2\sqrt{\mu}}.
\end{equation}  
\end{proposition}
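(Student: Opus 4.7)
The plan is to start from the explicit integral formula \eqref{eq:time_1d} for $\bar t$ and compare the conservative dynamics induced by $f$ with that of a reference harmonic oscillator whose potential is $g(y):=\frac{\mu}{2}(y-x^*)^2$. The latter system, starting from rest at $x_0$, reaches $x^*$ exactly at the quarter-period $\pi/(2\sqrt{\mu})$, so it is enough to show that the real trajectory reaches $x^*$ no later than the harmonic one. By the symmetry of the problem I may assume without loss of generality that $x_0<x^*$; the opposite case is analogous.

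The central step is to establish the pointwise lower bound
\begin{equation*}
f(x_0)-f(y)\geq \frac{\mu}{2}\bigl[(x_0-x^*)^2-(y-x^*)^2\bigr]\qquad\text{for every } y\in[x_0,x^*].
\end{equation*}
Since $f\in\mathscr{S}^{1,1}_{\mu,L}$, the strong-monotonicity of $f'$ gives $(f'(s)-f'(x^*))(s-x^*)\geq \mu(s-x^*)^2$, and using $f'(x^*)=0$ this rewrites as $f'(s)\leq \mu(s-x^*)$ for every $s\leq x^*$. Writing $f(x_0)-f(y)=-\int_{x_0}^{y}f'(s)\,ds$ and integrating the inequality $-f'(s)\geq -\mu(s-x^*)=\mu(x^*-s)\geq 0$ on $[x_0,y]\subseteq(-\infty,x^*]$ yields the desired bound.

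Once the comparison is in place, plugging the inequality into \eqref{eq:time_1d} gives
\begin{equation*}
\bar t \leq \int_{x_0}^{x^*}\frac{dy}{\sqrt{\mu\bigl[(x_0-x^*)^2-(y-x^*)^2\bigr]}},
\end{equation*}
and the substitution $y-x^*=(x_0-x^*)\cos\theta$ reduces the right-hand side to $\frac{1}{\sqrt{\mu}}\int_{0}^{\pi/2}d\theta=\frac{\pi}{2\sqrt{\mu}}$, proving the claim. The argument is rather direct: the only delicate point is keeping track of signs when using strong convexity, since one needs a \emph{lower} bound on the difference $f(x_0)-f(y)$ (rather than the more familiar upper bound provided by the strong-convexity lower quadratic model at $x_0$), and this is achieved precisely by integrating the pointwise estimate for $f'$ obtained from the monotonicity of the gradient anchored at $x^*$.
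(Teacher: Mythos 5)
Your proof is correct and follows essentially the same route as the paper: both compare $f$ with the quadratic model $g(y)=\frac{\mu}{2}(y-x^*)^2$, establish the pointwise bound $f(x_0)-f(y)\geq g(x_0)-g(y)$ on the segment between $x_0$ and $x^*$, and substitute into the integral formula \eqref{eq:time_1d} to evaluate the resulting harmonic-oscillator integral as $\pi/(2\sqrt{\mu})$. The only cosmetic difference is that you derive the comparison of potentials from the strong monotonicity of $f'$ anchored at $x^*$, while the paper integrates the sign of $f'-g'$ directly, which is the same underlying fact.
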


The proof of Proposition~\ref{prop:est_time_1d}
is postponed in Appendix~\ref{App_2dim}.
The statement of Proposition~\ref{prop:est_time_1d} is sharp: inequality
\eqref{eq:est_time} is achieved for quadratic functions.
On the other hand, if the function $f$ is not strongly convex,
the visiting time $\bar t$ depends, in general, on the initial position.
As we are going to show in the following example, it may happen that
the closer the starting point is to the minimizer, the longer it takes
to arrive at.

\begin{example}
Let $f: \R \to \R$ be defined as $f(x)=\frac14 x^4$. Clearly, $f$ is strictly
convex (but not strongly) and $x^*=0$ is the unique minimizer. 
Let us choose $x_0>0$.
Then we have that
\begin{align*}
\bar t  = & \int_0^{x_0}\frac{\sqrt{2}}{\sqrt{x_0^4 - y^4}}dy 
=\int_0^{x_0}\frac{\sqrt{2}}{\sqrt{x_0^2 + y^2} \sqrt{x_0^2 - y^2}}dy \\
&\geq  \int_0^{x_0}\frac{1}{x_0 \sqrt{x_0^2 - y^2}}dy
= \frac{\pi}{2x_0}. 
\end{align*}
This shows that, in general, we can not give \textit{a priori}
an upper bound for $\bar t$. However, this does not mean that methods designed
with this approach are not suitable for the optimization of non-strongly
convex functions. Indeed, the visiting time $\bar t$ is finite, and this
guarantees that the continuous-time method converges in a finite amount of time.
This is not true, for example, in the case of the classical gradient flow.
\end{example}

The multidimensional case is much more complicated.
We now focus on quadratic objective functions
and, as we will see,
also in this basic case our global knowledge
is quite unsatisfactory.
On the other hand, the study of quadratic functions 
leads to useful considerations that we try
to apply to more general cases.
Let us consider the Cauchy problem
\begin{equation} \label{eq:Cau_prob_nd}
\begin{cases}
\ddot{x} + \nabla f(x) = 0, \\
x(0) = x_0, \\
\dot{x}(0) = 0.
\end{cases}
\end{equation}
The main difference with respect to the one-dimensional case lies in the fact
that, in general, the solution $t\mapsto x(t)$ of \eqref{eq:Cau_prob_nd} never
visits a local minimizer of $f$. The example below shows this phenomenon.

\begin{example}
Let us consider $f:\R^2 \to \R$ defined as 
$f(x_1, x_2) = \frac{a^2}{2}x_1^2 + \frac{b^2}{2} x_2^2$, where $a,\,b > 0$.
Let us set $ x(0) = (x_{0,1}, \, x_{0,2})^T\in \R^2 $. Then the solution of Cauchy
problem \eqref{eq:Cau_prob_nd} is 
\[
t \mapsto x(t) = (x_{0,1} \cos(at),\, x_{0,2} \cos (bt))^T.
\]
If $x_{0,1}, \, x_{0,2} \neq 0$ and if the ratio $a / b $ is not a rational number,
then $x(t) \neq (0,0)^T$ for every $t\in [0, +\infty )$.
This also shows that, when the dimension is larger than one,
Proposition~\ref{prop:1d_finite} fails. Indeed, it is easy to check that
the kinetic energy function $t \mapsto \frac12 |\dot{x}(t)|^2$ has many 
local maxima, but the solution never visits any local minimizer of $f$.  
\end{example}

When $f: \R^n \to \R$ is a strongly convex quadratic function, we can estimate
the decrease of the objective function after each arrest.

\begin{lemma} \label{lem:est_decr}
Let $f: \R^n \to \R$ be a quadratic function of the form
\[
f(x) = \frac12 x^T A x,
\]
where $A$ is a symmetric and positive definite matrix. Let $x_0 \in \R^n$
be the starting point of Cauchy problem \eqref{eq:Cau_prob_nd}. Let
$0<{ \lambda}_1\leq \ldots \leq { \lambda}_n$ be the eigenvalues of $A$.
Then, the following inequality is satisfied:
\begin{equation} \label{eq:est_decreas}
f\left( x \left(  \frac{\pi}{2\sqrt{{ \lambda}_n}}  \right)  \right) \leq
\cos^2 \left(  \frac{\pi}{2} \sqrt{\frac{{ \lambda}_1}{{ \lambda}_n}}  \right)
f(x_0).  
\end{equation}
\end{lemma}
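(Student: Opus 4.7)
The plan is to exploit the linearity of the problem and diagonalize $A$ to reduce the multidimensional dynamics to a collection of decoupled harmonic oscillators, for which the solution is explicit and the bound follows from simple monotonicity of $\cos^2$ on $[0,\pi/2]$.

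First I would write $A = Q^T D Q$ with $Q$ orthogonal and $D = \mathrm{diag}(\lambda_1,\dots,\lambda_n)$, and introduce the change of variable $y(t) = Q x(t)$, $y_0 = Q x_0$. Since $\nabla f(x) = Ax$, the ODE $\ddot x + Ax = 0$ becomes $\ddot y + D y = 0$ with $y(0) = y_0$ and $\dot y(0)=0$, so the components decouple into
\begin{equation*}
\ddot y_i + \lambda_i y_i = 0, \quad y_i(0) = y_{0,i},\ \dot y_i(0) = 0,
\end{equation*}
and hence $y_i(t) = y_{0,i} \cos(\sqrt{\lambda_i}\, t)$. Using that $Q$ is orthogonal and $Q A Q^T = D$, one rewrites the objective along the trajectory as
\begin{equation*}
f(x(t)) = \tfrac12\, y(t)^T D y(t) = \tfrac12 \sum_{i=1}^n \lambda_i\, y_{0,i}^2 \cos^2(\sqrt{\lambda_i}\, t).
\end{equation*}

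Next I would evaluate at the specific time $t^* = \pi/(2\sqrt{\lambda_n})$. For each index $i$ one gets $\sqrt{\lambda_i}\, t^* = \tfrac{\pi}{2}\sqrt{\lambda_i/\lambda_n} \in [0,\pi/2]$ since $\lambda_i \le \lambda_n$. On this interval $u \mapsto \cos^2 u$ is nonincreasing, and $\sqrt{\lambda_i/\lambda_n} \ge \sqrt{\lambda_1/\lambda_n}$, so
\begin{equation*}
\cos^2(\sqrt{\lambda_i}\, t^*) \le \cos^2\!\Bigl(\tfrac{\pi}{2}\sqrt{\lambda_1/\lambda_n}\Bigr) \quad \text{for every } i=1,\dots,n.
\end{equation*}
Plugging this bound into the expression for $f(x(t^*))$ and factoring out the common constant yields
\begin{equation*}
f(x(t^*)) \le \cos^2\!\Bigl(\tfrac{\pi}{2}\sqrt{\lambda_1/\lambda_n}\Bigr) \cdot \tfrac12 \sum_{i=1}^n \lambda_i\, y_{0,i}^2 = \cos^2\!\Bigl(\tfrac{\pi}{2}\sqrt{\lambda_1/\lambda_n}\Bigr) f(x_0),
\end{equation*}
which is precisely \eqref{eq:est_decreas}.

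I do not expect any real obstacle: the problem is truly one-dimensional in each eigen-coordinate, the choice of $t^*$ is exactly tuned so that the highest-frequency mode is completely annihilated (its cosine factor is $0$), and the remaining modes are damped by factors that are uniformly bounded by the worst case $i=1$. The only mild care needed is to observe that the bound is tight in the scalar direction corresponding to $\lambda_1$, consistent with the sharpness already noted for the quadratic one-dimensional case in Proposition~\ref{prop:est_time_1d}.
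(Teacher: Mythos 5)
Your proof is correct and follows essentially the same route as the paper: diagonalize $A$, decouple into harmonic oscillators $y_i(t)=y_{0,i}\cos(\sqrt{\lambda_i}\,t)$, and compare the modes at $t^*=\pi/(2\sqrt{\lambda_n})$. The only cosmetic difference is that you bound the potential energy directly via the monotonicity of $\cos^2$ on $[0,\pi/2]$, whereas the paper bounds the kinetic energy from below via $\sin^2$ and then invokes conservation of the total mechanical energy --- the two are equivalent by $\sin^2+\cos^2=1$.
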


\begin{remark}
Let us assume that the kinetic energy function has at least one local maximizer
and let  $t_1 \in (0, +\infty)$ be the smallest.
Then Lemma~\ref{lem:est_decr} implies that 
\[
f(x(t_1)) \leq \cos^2 \left(  \frac{\pi}{2} \sqrt{\frac{{ \lambda}_1}{{ \lambda}_n}}  \right)f(x_0).
\]
Indeed, we have that 
$t_1 \geq \frac{\pi}{2\sqrt{{ \lambda}_n}}$, 
since the time derivative of the
kinetic energy function is non-negative at $t=\frac{\pi}{2\sqrt{{ \lambda}_n}}$.  
Hence, if we iterate the evolution-restart procedure $k$ times
(assuming that the kinetic energy function always attains a local maximum)
 and if we
call $x^{(k)}$ the restart point after the $k$-th iteration, we have that
\[
f(x^{(k)}) \leq 
\left[ \cos^2 \left(  \frac{\pi}{2} \sqrt{\frac{{ \lambda}_1}{{ \lambda}_n}}  \right) \right]^k f(x_0).
\]
So, in terms of evolution-restart iterations, we have that the value of the
objective function decreases at exponential rate.
However, since we do not have an upper bound on the 
restart time, we
do not know the rate of decrease in terms of the 
evolution time. As we explain in the next Section,
 we can overcome this problem
by designing alternative restart criteria.
For example, in the particular case of
quadratic functions,
we can keep the free-evolution amount of time
constant and equal to 
$\Delta T = \frac{\pi}{2 \sqrt{{ \lambda}_n}} $.
Let $t\mapsto \tilde x (t)$ be the curve obtained
with this procedure,
then, owing to the proof of 
Lemma~\ref{lem:est_decr}, we
have that
\[
f(\tilde x(t)) \leq \left[ \cos^2 \left(  \frac{\pi}{2} \sqrt{\frac{{ \lambda}_1}{{ \lambda}_n}}  \right) \right]^{\left\lfloor\frac{t}{\Delta T} \right\rfloor} f(x_0),
\] 
where  $\left\lfloor \cdot \right\rfloor$ 
denotes the integer part. 
\end{remark}

\end{section}

\begin{section}{An alternative restart criterion} \label{sec:alt}
The restart criterion that we have considered so far consists in waiting
until the kinetic energy reaches a local maximum.
This idea has already been introduced in
\cite{SBC} in order to improve the convergence
rate of the solutions of the ODE modeling
the Nesterov method.
{Indeed, the solutions of the ODE 
considered in \cite{SBC} exhibit 
undesirable oscillations, that can be avoided
by means of this adaptive restart strategy.
As a matter of fact, in \cite{SBC} it is proved
that the restarted Nesteov ODE achieves a linear
convergence rate when the objective function is
strongly convex.
 The proof of this result consists
basically of two main steps (i.e., the estimate
of the decrease of the objective function at each
evolution-restart iteration, and the uniform
upper bound for the restart time), and in both of them
the presence of the viscosity friction plays
a crucial role. On the other hand, in the conservative
ODE that we study in the present paper there is no
viscosity term, hence we cannot adapt to our case
the arguments employed in \cite{SBC}.  In order to
manage the absence of friction, in this section we
introduce an original restart strategy, alternative
to the one considered so far.
We prove that the resulting continuous-time method
achieves linear convergence rate when the objective
function is strongly convex, and we show that
the curve obtained has finite length. 
It is important to recall that our method, as well
as the restarted method studied in \cite{SBC},
does not make use of the constant of strong
convexity of the objective function.}

In this section we propose a restart criterion based 
on the maximization of the mean dissipation.
If we arrest the conservative evolution at the instant 
$t>0$, then the
value of the kinetic energy $E_K(t)$ at the instant 
$t$ equals the decrease of the objective function. 
The idea behind this alternative restart criterion 
is that we arrest the conservative evolution
of the system when the mean dissipation
$t\mapsto E_K(t)/t$ reaches a local maximum.
{We use the notation $C^{2,1}_L$ to
denote the functions in $C^2$ whose 
gradient is Lipschitz with constant $L>0$.
The symbol $\mathscr{S}^{2,1}_{\mu,L}$ is used to 
indicate the functions in $C^{2,1}_L$ that are
strongly convex with constant $\mu>0$.}

Let $f:\R^n \to \R$ be a 
{$C^{1,1}_L$} function and 
let us consider the Cauchy
problem
\begin{equation} \label{eq:Cau_prob_nd_alt}
\begin{cases}
\ddot{x} + \nabla f(x) = 0, \\
x(0) = x_0, \\
\dot{x}(0) = 0.
\end{cases}
\end{equation}
Let us define the function $r:[0,+\infty)\to [0,+\infty)$ as
\begin{equation} \label{eq:Mean_Diss}
r(t) = \begin{cases} 0 & t=0, \\ \frac{E_K(t)}{t} & t>0,   \end{cases}
\end{equation}
where $E_K$ is the kinetic energy function relative to the solution of
Cauchy problem \eqref{eq:Cau_prob_nd_alt}.
We observe that $r$ is differentiable at $t=0$, since we have that
\begin{equation}\label{eq:expans_E_K}
E_K(t) = \frac12|\nabla f (x_0)|^2t^2 + o(t^2), 
\end{equation}
as $t\to0$. 
If we take the derivative  of $r$ with respect to the time, we obtain that
\begin{equation} \label{eq:der_mean_diss}
\frac{d}{dt}r = \frac{t\dot{E}_K(t) - E_K(t)}{t^2}
\end{equation}
for every $t>0$. With a simple computation, we can check that the derivative of $r$
can be continuously extended at $t=0$. We have that
\begin{equation*} 
\frac{d}{dt}r = 
\begin{cases} 
\frac12|\nabla f (x_0)|^2 & t=0, \\
\frac{t\dot{E}_K(t) - E_K(t)}{t^2} & t>0.
\end{cases}
\end{equation*}
We observe that the derivative of $r$ at $t=0$ is positive, hence
it remains non-negative in an interval $[0,\varepsilon)$.
The Maximum Mean Dissipation criterion consists of restarting the evolution when
the function $t\mapsto r(t)$ reaches a local maximum.
The restart time is 
\begin{equation} \label{eq:MMD_time}
 t_a = \inf \{    t:\,  t\dot{E}_K(t) - E_K(t)<0  \}.
\end{equation}
We observe that, if $\bar t \in (0, +\infty)$ is a local maximizer of the kinetic 
energy, then we have that
\[
 \bar t \dot{E}_K(\bar t) - E_K(\bar t) = -E_K(\bar t) <0. 
\]
This means that a local maximizer of the kinetic energy can not be a maximizer
of the mean dissipation $r$. This fact is described in  Figure~\ref{fig:MMD}.
\begin{center}
\begin{figure}
\includegraphics[scale=0.22]{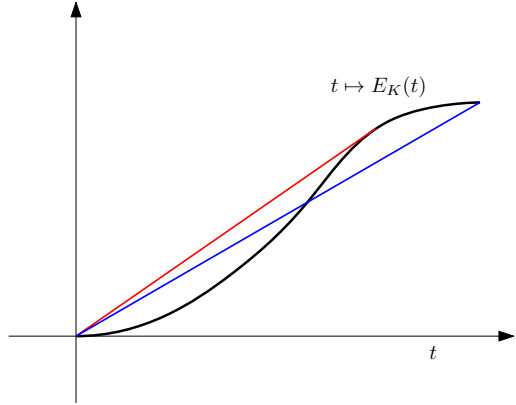}
\caption{Mean dissipation. The black graph represents a typical profile of the
kinetic energy function, in the case it attains a local maximum.
The slope of the segments represents the mean dissipation that we obtain
when we stop the evolution in a given instant. The picture shows that
stopping the evolution in correspondence of a local maximum of the 
kinetic energy function does not guarantee the highest mean dissipation.}
\label{fig:MMD}
\end{figure}
\end{center}
We can prove that the restart time $t_a$
is finite. We remark that the following 
result holds even if the function $f$ is not
convex.
\begin{lemma} \label{lem:MAD_time}
Let $f: \R^n \to \R$ be a
{$C^{1,1}_L$} coercive function 
and let us take $x_0 \in \R^n$.
Let $t\mapsto E_K(t)$ be the kinetic energy function of the solution of
Cauchy problem \eqref{eq:Cau_prob_nd_alt}. Then there exists 
$\hat{t} \in (0,+\infty)$ such that
\[
\hat{t}\dot{E}_K(\hat{t}) - E_K(\hat{t}) <0.
\]
\end{lemma}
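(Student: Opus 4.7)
The plan is to exploit conservation of energy together with the coercivity of $f$ to show that the mean dissipation $r(t) = E_K(t)/t$ tends to zero at infinity, while being strictly positive for small $t>0$; then a mean-value argument yields a point where $r'$ is strictly negative, which is exactly the desired inequality.

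First I would note that, along the solution of \eqref{eq:Cau_prob_nd_alt}, the total mechanical energy is conserved, so
\begin{equation*}
E_K(t) = f(x_0) - f(x(t)).
\end{equation*}
Coercivity of $f$ implies both that $f$ attains a global minimum $f^* := \min_{\R^n} f$ and that the sublevel set $\{f \leq f(x_0)\}$ is compact; the trajectory $x(t)$ remains inside this set for all $t \geq 0$, and in particular
\begin{equation*}
E_K(t) \leq f(x_0) - f^* =: M < +\infty
\end{equation*}
for all $t \geq 0$. Therefore $r(t) \leq M/t \to 0$ as $t \to +\infty$.

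Next I would use the expansion \eqref{eq:expans_E_K} together with the formula for $r'(0)$ in the excerpt: assuming the nondegenerate case $\nabla f(x_0) \neq 0$ (otherwise $x(t) \equiv x_0$ and there is nothing to prove, and this is the only case of interest consistent with the assumption $f'(x_0)\neq 0$ used earlier), we have $r'(0) = \tfrac12 |\nabla f(x_0)|^2 > 0$, so $r$ is continuous on $[0,+\infty)$, vanishes at $0$, and becomes strictly positive immediately afterwards. In particular there exists $t_1 > 0$ with $r(t_1) > 0$, and since $r(t) \to 0$ we can choose $t_2 > t_1$ with $r(t_2) < r(t_1)$.

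Finally I would apply the Mean Value Theorem to $r$ on $[t_1, t_2]$: this produces some $\hat t \in (t_1, t_2)$ such that
\begin{equation*}
r'(\hat t) = \frac{r(t_2) - r(t_1)}{t_2 - t_1} < 0.
\end{equation*}
In view of \eqref{eq:der_mean_diss}, this is equivalent to $\hat t \dot E_K(\hat t) - E_K(\hat t) < 0$, which concludes the argument. The only delicate point is the degenerate case $\nabla f(x_0) = 0$, which must either be excluded in the statement (as it already is, implicitly, in Proposition~\ref{prop:1d_finite}) or handled by a separate trivial observation; otherwise all steps are routine consequences of energy conservation and coercivity.
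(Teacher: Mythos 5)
Your argument is correct and rests on exactly the same two ingredients as the paper's proof: coercivity plus energy conservation bound $E_K$ (so the mean dissipation $r(t)=E_K(t)/t$ cannot stay above a positive linear-in-$t$ lower bound), and the positivity of $r$ just after $t=0$ when $\nabla f(x_0)\neq 0$. The paper merely packages this as a contradiction (a non-decreasing $r$ would force $E_K(t)\geq \tfrac{E_K(t_1)}{t_1}t$, which is unbounded) where you use the Mean Value Theorem directly; your explicit remark that the degenerate case $\nabla f(x_0)=0$ must be excluded is a fair observation, since in that case $E_K\equiv 0$ and the paper's proof silently assumes some $t_1$ with $E_K(t_1)>0$.
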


\begin{proof}
We argue by contradiction. Let us assume that
\begin{equation} \label{eq:abs_ineq}
t\dot{E}_K(t) - E_K(t) \geq0,
\end{equation}
for every $t\geq 0$.
Using \eqref{eq:der_mean_diss}, we deduce that
the mean dissipation $t \mapsto r(t)$
is non-decreasing for every $t\geq 0$.
Let $t_1>0$ be any instant such that the kinetic 
energy is positive, i.e., $E_K(t_1)>0$.
Then we have that
\begin{equation} \label{eq:est_ener_time}
E_K(t) \geq \frac{E_K(t_1)}{t_1}t,
\end{equation}
for every $t>t_1$. This is impossible since the kinetic energy is
always bounded from above if the
function $f$ is coercive.  
\end{proof}

Using the idea of the proof of Lemma~\ref{lem:MAD_time},
we can estimate from above the restart time of the Maximum Mean Dissipation.
Indeed, the derivative of the mean dissipation is positive at $t=0$, and then
it remains non-negative in the interval $[0, t_a]$. Then, using the same notations
as in the proof above,
for every $t \in [t_1, t_a]$
the kinetic energy function $E_K$ satisfies inequality \eqref{eq:est_ener_time}.
On the other hand, from the conservation of the energy 
{it} follows that
\[
f(x_0) - f^* \geq E_K(t),
\]
where $f^*$ is the minimum value of
 the objective function $f$.
This implies that
\begin{equation*}
f(x_0) - f^* \geq \frac{E_K(t_1)}{t_1}t_a,
\end{equation*}
that can be rewritten as
\begin{equation} \label{eq:time_est_provv_1}
t_a \leq \frac{t_1}{E_K(t_1)}(f(x_0) - f^*).
\end{equation}
{
\begin{remark}
Inequality \eqref{eq:time_est_provv_1} implies that
the restart condition of the Maximum Mean Dissipation
is met after a finite amount of time, as soon as
$f$ is a coercive function in $C^{1,1}_L$.
On the other hand, to the best of our knowledge,
it is not possible to exclude that the kinetic
energy  $E_K$ could grow monotonically, without 
assuming maximum. This means that in general,
when dealing with conservative system
\eqref{eq:Cau_prob_nd_alt} together with the
 restart strategy based on the
maximization of the kinetic energy, it
is not possible to prove an upper bound for the
restart time. 
For this reason, in \cite{TPL}
the linear convergence result for strongly convex 
objective functions is proved under the assumption
of the {\it a priori} existence of a uniform upper 
bound for the restart time.
On the other hand, using the Maximum Mean Dissipation,
when the objective function is in 
$\mathscr{S}^{1,1}_{\mu,L}$ we can bound the
restart time with a quantity that depends only on
the constants $\mu$ and  $L$.      
\end{remark}}
In the case of a strongly convex function, we can give 
{ uniform}
estimates for the restart time $t_a$.

\begin{proposition} \label{prop:stop_time_low_up}
Let $f:\R^n \to \R$ be a  
function {  in $\mathscr{S}^{1,1}_{\mu,L}$}.
For every $x_0\in \R^n$, let us consider Cauchy 
Problem 
\eqref{eq:Cau_prob_nd_alt} with starting point $x_0$ 
and let
$t\mapsto E_K(t)$ be the kinetic energy function of 
the solution.
Let $t_a$ be the stopping time defined in 
\eqref{eq:MMD_time}.
Then the following estimate{ s} hold:
\begin{equation}\label{eq:stop_time_low}
t_a > \frac{\sqrt{\mu}}{8{ L}},
\end{equation}
{ and
\begin{equation} \label{eq:stop_time_up}
t_a \leq \mathcal{T}_{R} := 32 \frac{L}{\mu\sqrt{\mu}}.
\end{equation}}
\end{proposition}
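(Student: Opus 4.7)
The plan is to split the argument into the two inequalities and to base both on a quantitative expansion of the trajectory near $t=0$, combined with the Polyak-Lojasiewicz inequality $|\nabla f(x)|^2 \geq 2\mu(f(x)-f^*)$, which follows from $\mu$-strong convexity.

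\emph{Lower bound.} The strategy is to show directly that $h(t):=t\dot E_K(t) - E_K(t)$ is strictly positive on $(0, \sqrt{\mu}/(8L)]$. Using $\ddot x = -\nabla f(x)$, one rewrites
\[
h(t) = -t\,\dot x(t)\cdot \nabla f(x(t)) - \tfrac{1}{2}|\dot x(t)|^2,
\]
and expresses $\dot x(t) = -\int_0^t \nabla f(x(s))\,ds$. The Lipschitz bound $|\nabla f(x(s))-\nabla f(x_0)|\le L|x(s)-x_0|$ together with the crude estimate $|x(s)-x_0|\le (s^2/2)\sup_{[0,s]}|\nabla f(x(\cdot))|$ yields, via a standard bootstrap, that for $t\le \sqrt{\mu}/(8L)$ one has $Lt^2 \leq 1/64$ and hence $\sup_{[0,t]}|\nabla f(x(\cdot))|$ is within a small multiplicative factor of $|\nabla f(x_0)|$. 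Plugging these estimates into the expression for $h(t)$ shows that the leading contribution $\tfrac{1}{2}t^2|\nabla f(x_0)|^2$ dominates all perturbations, so $h(t) > 0$ on the whole interval.

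\emph{Upper bound.} By definition of $t_a$, the mean dissipation $r(t)=E_K(t)/t$ is non-decreasing on $[0, t_a]$, so for every $t_1 \in (0, t_a]$ conservation of the total mechanical energy gives
\[
t_a \leq t_1\,\frac{E_K(t_a)}{E_K(t_1)} \leq t_1\,\frac{f(x_0)-f^*}{E_K(t_1)}.
\]
Once \eqref{eq:stop_time_low} is established, the choice $t_1 := \sqrt{\mu}/(8L)$ is admissible, and the same perturbation estimate used for the lower bound also supplies, with room to spare, $E_K(t_1) \geq \tfrac{1}{4}\,t_1^2|\nabla f(x_0)|^2$. Combining this with the Polyak-Lojasiewicz inequality $|\nabla f(x_0)|^2 \geq 2\mu(f(x_0)-f^*)$ and simplifying yields
\[
t_a \leq \frac{2}{\mu t_1} = \frac{16\,L}{\mu\sqrt{\mu}} \leq 32\,\frac{L}{\mu\sqrt{\mu}} = \mathcal{T}_{R}.
\]

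The main obstacle lies in the quantitative perturbation estimate underlying the lower bound: since $f$ is only of class $C^{1,1}_L$, all bounds must be obtained in integral form from $\dot x(t)=-\int_0^t\nabla f(x(s))\,ds$ via a bootstrap on $\sup_{[0,t]}|\nabla f(x(\cdot))|$ and repeated use of Lipschitz continuity. Once this estimate is available, both inequalities follow from the same ingredients and elementary algebra, and in fact the constant $32$ appearing in \eqref{eq:stop_time_up} turns out to be conservative.
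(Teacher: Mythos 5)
Your proposal is correct and follows essentially the same route as the paper: the lower bound by showing $t\dot E_K(t)-E_K(t)>0$ on $(0,\sqrt{\mu}/(8L)]$ via a small-time expansion of the trajectory, and the upper bound by combining the monotonicity of $r$ on $[0,t_a]$, conservation of energy, and the Polyak--Lojasiewicz inequality. The only difference is technical: you obtain the small-time control of $|\nabla f(x(t))-\nabla f(x_0)|$ by a self-contained bootstrap on $\sup_{[0,t]}|\nabla f(x(\cdot))|$, whereas the paper (Lemma~\ref{lem:kin_en_small_time}) derives it from energy conservation plus \eqref{eq:grad_vs_f_SC}; both yield the stated constants.
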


We postpone the proof of 
Proposition~\ref{prop:stop_time_low_up}
since we need some technical lemmas.
In the following lemma we recall the
Polyak-Lojasiewicz inequality for strongly-convex
functions (see \cite{P63}{ , and
\cite[Theorem~2.1.10]{N18} for its proof}).

\begin{lemma} \label{lem:est_f_SC}
Let $f:\R^n \to \R$ be a
{ $C^1$} function { 
and let us assume that $f$ is $\mu$-strongly convex,
with $\mu>0$}. 
Let $x^*$ be the unique 
minimizer of $f$.
Then, for every $x \in \R^n$,  the following 
Polyak-Lojasiewicz inequality holds:
\begin{equation} \label{eq:grad_vs_f_SC}
f(x)-f(x^*)\leq \frac{1}{2\mu}|\nabla f (x)|^2.
\end{equation}
\end{lemma}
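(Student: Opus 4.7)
The plan is to derive the Polyak--{\L}ojasiewicz inequality directly from the standard quadratic lower bound furnished by $\mu$-strong convexity. Since $f \in C^1$ is $\mu$-strongly convex, I would first record the well-known characterization
\[
f(y) \geq f(x) + \nabla f(x)^T (y-x) + \frac{\mu}{2}|y-x|^2 \qquad \text{for all } x,y \in \R^n,
\]
which follows by applying convexity to $x \mapsto f(x) - \frac{\mu}{2}|x|^2$ and rearranging.

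Next, I would fix an arbitrary $x \in \R^n$ and view the right-hand side as a function $q_x(y)$ of $y$. This $q_x$ is a strictly convex quadratic, so its global minimum is attained at the unique critical point $y_x = x - \frac{1}{\mu}\nabla f(x)$, with minimum value
\[
\min_{y \in \R^n} q_x(y) = q_x(y_x) = f(x) - \frac{1}{2\mu}|\nabla f(x)|^2.
\]
Since the pointwise inequality $f(y) \geq q_x(y)$ holds for every $y$, it holds in particular at $y = x^*$, and more strongly we may take the infimum on both sides over $y$, which yields
\[
f(x^*) = \min_{y \in \R^n} f(y) \geq \min_{y \in \R^n} q_x(y) = f(x) - \frac{1}{2\mu}|\nabla f(x)|^2.
\]
Rearranging gives precisely \eqref{eq:grad_vs_f_SC}.

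There is no real obstacle here; the only subtlety worth emphasizing is the step of passing from the pointwise bound $f(y) \geq q_x(y)$ to the bound on the infima, which is immediate but is what gives the PL inequality its ``gradient-dominates-suboptimality'' flavor without ever needing to know $x^*$ explicitly. Note also that the Lipschitz constant $L$ of $\nabla f$ plays no role in this estimate, so the assumption $f \in C^{1,1}_L$ is stronger than needed; the proof works whenever $f \in C^1$ and $f$ is $\mu$-strongly convex, matching the hypotheses stated in the lemma.
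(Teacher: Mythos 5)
Your proof is correct and is essentially the standard argument the paper itself points to (it gives no proof of its own, citing \cite[Theorem~2.1.10]{N18}): minimize the strong-convexity quadratic lower bound over $y$ and evaluate at the infimum. The computation of the minimizer $y_x = x - \frac{1}{\mu}\nabla f(x)$ and the resulting bound are accurate, and your remark that only $C^1$ plus $\mu$-strong convexity is needed matches the lemma's hypotheses.
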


In the following lemma, we give an estimate of the growth of the
kinetic energy function $t\mapsto E_K(t)$ when $t$ is small.

\begin{lemma} \label{lem:kin_en_small_time}
Let $f:\R^n \to \R$ be a 
function in { $\mathscr{S}^{1,1}_{\mu,L}$}.
For every $x_0\in \R^n$, let us consider
 Cauchy Problem 
\eqref{eq:Cau_prob_nd_alt} with starting point
 $x_0$ and let $t\mapsto E_K(t)$ be the 
 kinetic energy function of the solution.
Then, for every 
$0\leq t\leq \sqrt{\mu} / (2{ L})$,
 the following inequality holds:
\begin{equation}\label{eq:E_Kin_small_time}
\frac{1}{8}| \nabla f (x_0) |^2 t^2 \leq E_K(t)
 \leq \frac{25}{32}| \nabla f (x_0) |^2 t^2.
\end{equation}
\end{lemma}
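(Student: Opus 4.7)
I start from the integrated form of the ODE. Since $\dot x(0)=0$, integrating $\ddot x=-\nabla f(x)$ once gives
\[
\dot x(t)=-\int_0^t \nabla f(x(s))\,ds,
\]
so that $E_K(t)=\frac12\bigl|\int_0^t \nabla f(x(s))\,ds\bigr|^2$. The strategy is to show that on the short interval $[0,\sqrt{\mu}/(2L)]$ the integrand stays close to the constant value $\nabla f(x_0)$, then to control $E_K(t)$ by the triangle inequality applied inside the norm.

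\textbf{A priori bound on the displacement.} Conservation of the total mechanical energy gives $|\dot x(t)|^2=2(f(x_0)-f(x(t)))\leq 2(f(x_0)-f^*)$, and the Polyak–{\L}ojasiewicz inequality of Lemma~\ref{lem:est_f_SC} (applicable since $f\in\mathscr{S}^{1,1}_{\mu,L}$) further yields
\[
|\dot x(t)|\leq \tfrac{1}{\sqrt{\mu}}|\nabla f(x_0)|\qquad\text{for all }t\geq 0.
\]
Integrating in time, $|x(s)-x_0|\leq \frac{s}{\sqrt{\mu}}|\nabla f(x_0)|$, and by the Lipschitz continuity of $\nabla f$,
\[
|\nabla f(x(s))-\nabla f(x_0)|\leq \tfrac{Ls}{\sqrt{\mu}}|\nabla f(x_0)|.
\]

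\textbf{Triangle inequality inside the integral.} Writing
\[
\int_0^t \nabla f(x(s))\,ds = t\,\nabla f(x_0)+\int_0^t\bigl(\nabla f(x(s))-\nabla f(x_0)\bigr)\,ds,
\]
the previous estimate bounds the error term in norm by $\frac{L t^2}{2\sqrt{\mu}}|\nabla f(x_0)|$. Restricting to $t\leq \sqrt{\mu}/(2L)$, this error is at most $\frac{t}{4}|\nabla f(x_0)|$, so the triangle inequality yields
\[
\tfrac{3t}{4}|\nabla f(x_0)|\leq \Bigl|\int_0^t \nabla f(x(s))\,ds\Bigr|\leq \tfrac{5t}{4}|\nabla f(x_0)|.
\]
Squaring and dividing by $2$ gives $\frac{9}{32}|\nabla f(x_0)|^2 t^2\leq E_K(t)\leq \frac{25}{32}|\nabla f(x_0)|^2 t^2$, and since $\frac{9}{32}\geq \frac{1}{8}$ the claimed inequality \eqref{eq:E_Kin_small_time} follows.

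\textbf{Expected obstacle.} The only subtle point is producing the pointwise velocity bound $|\dot x(t)|\leq \mu^{-1/2}|\nabla f(x_0)|$ that is uniform in $t$ (and does not degrade near $t=0$); everything else is bookkeeping. This is precisely what the Polyak–{\L}ojasiewicz inequality supplies, after coupling it with energy conservation, and it is the reason the estimate uses the strong convexity constant $\mu$ rather than merely Lipschitzness of $\nabla f$.
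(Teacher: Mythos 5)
Your proof is correct and follows essentially the same route as the paper's: decompose $\dot x(t)$ as $-t\nabla f(x_0)$ plus an error term, bound that error by combining energy conservation, the Polyak--{\L}ojasiewicz inequality, and the Lipschitz continuity of $\nabla f$, then conclude with the triangle inequality. The only (immaterial) difference is that you apply the restriction $t\leq\sqrt{\mu}/(2L)$ to both bounds and obtain the slightly sharper lower constant $9/32$, whereas the paper uses the weaker threshold $t\leq\sqrt{\mu}/L$ for the lower bound and states $1/8$; since $9/32\geq 1/8$, your conclusion implies the stated one.
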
 
\begin{proof}
We recall that ${ L}$ is a Lipschitz 
constant for $x\mapsto \nabla f(x)$.
Using the conservation of the mechanical energy,
we deduce that
\begin{align*}
|\nabla f (x(t)) - \nabla f(x_0)| &\leq 
{ L} |x(t) - x_0|
\leq { L} \int_0^t|\dot{x}(u)| \,du \\
&\leq { L}
 \int_0^t \sqrt{2}\sqrt{f(x_0) -f^*} \,du 
=  { L}\sqrt{2}\sqrt{f(x_0) -f^*} \,t.
\end{align*}
Owing to \eqref{eq:grad_vs_f_SC}, we
obtain that
\begin{equation}\label{eq:grad_small_time}
|\nabla f (x(t)) - \nabla f(x_0)| \leq 
\frac{{ L}}{\sqrt{\mu}} |\nabla f(x_0)|t.
\end{equation}
Using this fact, we deduce that
\begin{equation*}
|\dot{x}(t) + t\nabla f(x_0) |=
\left| \int_0^t (-\nabla f (x(s)) + \nabla f(x_0))
 \,ds \right|
\leq  
\int_0^t \frac{{ L}}{\sqrt{\mu}}
|\nabla f(x_0)|s \,ds.
\end{equation*}
Hence we have that
\begin{equation} \label{eq:ineq_techn_lem_1}
|\dot{x}(t) +t\nabla f(x_0) | \leq 
\frac{{ L}}{2\sqrt{\mu}} |\nabla f(x_0)|t^2.
\end{equation}
Using \eqref{eq:ineq_techn_lem_1} and the 
triangular inequality, we obtain that
\begin{equation*}
| \nabla f(x_0) |t - \frac{{ L}}{2\sqrt{\mu}}
| \nabla f(x_0) |
t^2 \leq
 |\dot{x}(t)| \leq | \nabla f(x_0) |t 
 + \frac{{ L}}{2\sqrt{\mu}}|\nabla f(x_0)|t^2.
\end{equation*}
Therefore, if $t\leq \sqrt{\mu} / { L}$,
we have that
\[
\frac{1}{2} | \nabla f(x_0) |t \leq |\dot{x}(t)|.
\]
On the other hand, if 
$t\leq\sqrt{\mu} / (2{ L})$,
we have that
\[
|\dot{x}(t)| \leq \frac{5}{4} | \nabla f(x_0) |t.
\]
This concludes the proof.  
\end{proof}

We now prove 
Proposition~\ref{prop:stop_time_low_up}.

\begin{proof}[Proof of Proposition~\ref{prop:stop_time_low_up}]
The proof of inequality 
\eqref{eq:stop_time_low}
 is based on the study of the sign of the quantity 
$t\mapsto t\dot{E}_K(t)-E_K(t)$.
First of all, we observe that 
\begin{equation}
\dot{x}(t) = -t\nabla f(x_0) -\int_0^t\left(\nabla f(x(s))- \nabla f(x_0)  \right)\, ds.
\end{equation}
Therefore, we deduce that
\begin{align*}
\dot{E}_K(t) &= \ddot{x}(t)\cdot\dot{x}(t) = -\nabla f(x(t))\cdot\dot{x}(t) \\
&= \nabla f(x(t)) 
\cdot
\left( t\nabla f(x_0) 
+\int_0^t\left(\nabla f(x(s))- \nabla f(x_0)  \right)\, ds \right)\\
&=(\nabla f(x(t))-\nabla f(x_0))\cdot
\left( t\nabla f(x_0) 
+\int_0^t\left(\nabla f(x(s))- \nabla f(x_0)  \right)\, ds \right)\\
& \,\,\,\,\,\,\,\, +|\nabla f(x_0)|^2t + \nabla f(x_0) \cdot  
\int_0^t\left(\nabla f(x(s))- \nabla f(x_0)  \right)\, ds.
\end{align*}
Owing to \eqref{eq:grad_small_time}, we obtain that:
\begin{equation*}
\dot{E}_K(t) \geq  |\nabla f(x_0)|^2t
-\frac{3}{2}\frac{{ L}}{\sqrt{\mu}}
|\nabla f(x_0)|^2t^2 
-\frac12 
\frac{{ L}^2}{\mu}|\nabla f(x_0)|^2t^3.
\end{equation*}
Using inequality \eqref{eq:E_Kin_small_time}, we have that
\[
t\dot{E}_K(t)-E_K(t) \geq |\nabla f(x_0)|^2t^2
\left( 
\frac{7}{32} 
-\frac{3}{2}\frac{{ L}}{\sqrt{\mu}}t 
-\frac12 \frac{{ L}^2}{\mu}t^2
 \right),
\]
for $t\leq {\sqrt{\mu}}/({2{ L}})$.
With a simple computation, we obtain that $t\dot{E}_K(t)-E_K(t)>0$
when $t\leq \tilde{t}$, where
\begin{equation} \label{eq:t_tilda}
\tilde{t}:= \frac{\sqrt{\mu}}{8{ L}}.
\end{equation}
By the definition of the stopping time $t_a$, we deduce that 
$t_a> \tilde{t}$.
This proves { 
\eqref{eq:stop_time_low}.

We now prove \eqref{eq:stop_time_up}.
Using \eqref{eq:stop_time_low} 
and the definition of the
stopping time $t_a$, we have that
\[
\frac{E_K(t_a)}{t_a}\geq\frac{E_K(\tilde{t})}{\tilde{t}}, 
\]
where $\tilde{t}$ is defined in \eqref{eq:t_tilda}.
The last inequality can be rewritten as
\begin{equation} \label{eq:t_stop_est_0}
t_a \leq \frac{E_K(t_a)}{E_K(\tilde{t})} {\tilde{t}}.
\end{equation}
Using the conservation of the energy and inequality
\eqref{eq:E_Kin_small_time},
we obtain that
\begin{equation} \label{eq:est_E_Kin_ratio}
\frac{E_K(t_a)}{E_K(\tilde{t})} \leq 
\frac{8(f(x_0)-f^*)}{|\nabla f(x_0)|^2\tilde{t}^2}
\leq \frac{4}{\mu}\frac{1}{\tilde{t}^2},
\end{equation}
where in the second inequality we used Lemma \ref{lem:est_f_SC}.
Combining \eqref{eq:t_stop_est_0} and \eqref{eq:est_E_Kin_ratio},
and using the definition of $\tilde{t}$ given in \eqref{eq:t_tilda},
we obtain that $t_a \leq \mathcal{T}_{R}$, where
we set
\[
\mathcal{T}_{R} := 32 \frac{L}{\mu\sqrt{\mu}}. 
\]
This concludes the proof.}  
\end{proof}

{ The framework for analyzing our 
restarted scheme is similar to that employed
in \cite{SBC}, however
we point out that the proofs are different
since the ODE considered in \cite{SBC} and the 
conservative system \eqref{eq:Cau_prob_nd_alt}
are structurally different.}

For this stopping criterion we have proved that
 the restart time is uniformly bounded by 
 $\mathcal{T}_{R}$.
In the following result, we provide an estimate
about the value of the kinetic energy at the
restart instant. 

\begin{lemma} \label{lem:E_K_stop}
Let $f:\R^n \to \R$ be a 
{ $C^{2,1}_L$}  function.
For every $x_0\in \R^n$, let us consider Cauchy 
Problem 
\eqref{eq:Cau_prob_nd_alt} with starting point 
$x_0$ and let
$t\mapsto E_K(t)$ be the kinetic energy function 
of the solution.
Then the following inequality holds:
\begin{equation}
E_K(t_a) \geq \frac{1}{2{ L}}
| \nabla f (x(t_a))|^2,
\end{equation}
where $t_a$ is the stopping time defined in 
\eqref{eq:MMD_time}.
\end{lemma}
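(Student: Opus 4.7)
The plan is to translate the restart condition at $t_a$ into a second-order inequality for $E_K$ and then invoke the bound $\|\nabla^2 f\|\le L$ built into the hypothesis $f\in C^{2,1}_L$. I set $N(t) := t\dot E_K(t) - E_K(t)$, so that by \eqref{eq:MMD_time} one has $t_a = \inf\{t>0 : N(t) < 0\}$. The expansion \eqref{eq:expans_E_K} gives $N(t) = \tfrac12|\nabla f(x_0)|^2 t^2 + o(t^2)$ as $t\to 0^+$, so $N>0$ on a right-neighborhood of the origin and $t_a > 0$ (the case $\nabla f(x_0)=0$ being trivial since the trajectory is stationary). Continuity of $N$ and the infimum property force $N(t_a) = 0$; moreover, in every right-neighborhood of $t_a$ there must be points where $N<0$, which rules out $\dot N(t_a)>0$ and therefore yields $\dot N(t_a)\le 0$.

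Next I differentiate $N$: a direct computation gives $\dot N(t) = \dot E_K(t) + t\ddot E_K(t) - \dot E_K(t) = t\,\ddot E_K(t)$. Since $t_a>0$, the condition $\dot N(t_a)\le 0$ becomes
\[
\ddot E_K(t_a)\le 0,
\]
which is the only information about $t_a$ I will use in the rest of the argument.

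I now compute $\ddot E_K$ along the conservative flow. Using $\ddot x = -\nabla f(x)$, one has $\dot E_K(t) = \dot x(t)\cdot\ddot x(t) = -\dot x(t)\cdot\nabla f(x(t))$ and hence
\[
\ddot E_K(t) = |\nabla f(x(t))|^2 - \dot x(t)^T \nabla^2 f(x(t))\,\dot x(t).
\]
Since $f\in C^{2,1}_L$, the operator norm of $\nabla^2 f$ is everywhere bounded by $L$, so $\dot x^T \nabla^2 f(x)\,\dot x \le L|\dot x|^2 = 2L\,E_K$. Evaluating at $t=t_a$ and using $\ddot E_K(t_a)\le 0$ yields
\[
0 \;\ge\; \ddot E_K(t_a) \;\ge\; |\nabla f(x(t_a))|^2 - 2L\, E_K(t_a),
\]
which rearranges to $E_K(t_a)\ge \tfrac{1}{2L}|\nabla f(x(t_a))|^2$.

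The most delicate step is the passage from the infimum definition of $t_a$ to $\dot N(t_a)\le 0$; once that is secured, the rest reduces to the clean cancellation $\dot N = t\,\ddot E_K$ together with the classical spectral bound on $\nabla^2 f$, and it is worth noting that convexity of $f$ is not used anywhere in the argument.
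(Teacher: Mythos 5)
Your proof is correct and follows essentially the same route as the paper: both arguments reduce the restart condition to the second-order inequality $\ddot E_K(t_a)\le 0$ (the paper via $\tfrac{d^2}{dt^2}r(t_a)=\ddot E_K(t_a)/t_a\le 0$ at the local maximizer of $r$, you via $\dot N(t_a)=t_a\ddot E_K(t_a)\le 0$ for $N=t^2\dot r$), and then combine the identity $\ddot E_K=|\nabla f(x)|^2-\dot x^T\nabla^2 f(x)\dot x$ with the spectral bound $\nabla^2 f\preceq L\,\mathrm{Id}$. Your derivation of the sign condition directly from the infimum definition of $t_a$ is, if anything, slightly more careful than the paper's appeal to $t_a$ being a local maximizer of $r$.
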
 
\begin{proof}
Owing to the definition, we have that $t_a$ is a local 
maximizer for the function $t\mapsto r(t)$, where 
$r:[0,+\infty) \to [0,+\infty)$ is the Mean Dissipation 
function defined in \eqref{eq:Mean_Diss}.
Recalling that $t_a\dot{E}_K(t_a)=E_K(t_a)$,
 we have that
\[
\frac{d^2}{dt^2}r(t_a)= \frac{\ddot{E}_K(t_a)}{t_a} \leq 0.
\]
On the other hand, we have
\begin{equation}\label{eq:ineq_ddEk}
\ddot{E}_K(t_a) = |\nabla f (x(t_a))|^2 
- \dot{x}(t_a)^T \nabla ^2 f (x(t_a))\dot{x}(t_a) \leq 0. 
\end{equation}
By the hypothesis, the matrix 
${ L}\mbox{Id} - \nabla^2f(x)$
is positive definite
for every $x\in \mathbb{R}^n$.
Using this fact in \eqref{eq:ineq_ddEk},
we obtain that
\[
2{ L}E_K(t_a) - |\nabla f(x(t_a))|^2\geq 0,
\]
and this concludes the proof.  
\end{proof}

We conclude this section providing an estimate
about the decrease of the objective function
with the convergence result.
{  Moreover, we prove that the
piecewise conservative method
produces a curve that has finite length.}

\begin{theorem} \label{thm:conv_cont}
Let $f:\R^n \to \R$ be a 
function {  in $\mathscr{S}^{2,1}_{\mu,L}$}. 
Let $x^* \in \R^n$ be the unique minimizer of $f$,
and  let $x_0 \in \R^n$ be the starting point.
Let $t\mapsto \tilde{x}(t)$ be the curve obtained 
applying the following iterative procedure:
\begin{itemize}
\item set $t_0 = 0$ and consider the forward
solution of 
$\ddot{\tilde{x}} + \nabla f(\tilde{x}) = 0$, with
$\tilde{x}(t_0) = x_0$ and $\dot{\tilde{x}}(t_0) =0$;
\item for every $k\geq 1$, let $t_k$ be the instant
when the mean dissipation
\[
t \mapsto \frac{|\dot{\tilde{x}}(t)|^2}{2(t-t_{k-1})},
 \,\,\,\,\, t>t_{k-1}
\]
attains a local maximum for the first time,
and set $x_k=\tilde{x}(t_k)$.
Then consider the forward
solution of 
$\ddot{\tilde{x}} + \nabla f(\tilde{x}) = 0$, with
$\tilde{x}(t_k) = x_k$ and $\dot{\tilde{x}}(t_k) =0$.  
\end{itemize}
Then for every $t\geq 0$ 
 the following inequality is satisfied:
\begin{equation}\label{eq:conv_cont_thm}
f(\tilde{x}(t)) - f(x^*) \leq
\left(  1 + \frac{\mu}{{ L}}
 \right)^{-\left\lfloor \frac{t}{\mathcal{T}_R} 
 \right\rfloor} (f(x_0)-f(x^*)),
\end{equation}
where $\mathcal{T}_R$ is defined in 
\eqref{eq:stop_time_up}.
Moreover, we can prove the following
upper bound for the length of the curve 
$t\mapsto \tilde x(t)$:
\begin{equation} \label{eq:length_bound}
\int_0^\infty |\dot{\tilde x}(t)| \, dt
\leq 
4\sqrt{2}  \frac{L}{\mu}\mathcal{T}_R
\sqrt{f(x_0)-f(x^*)}.
\end{equation}
\end{theorem}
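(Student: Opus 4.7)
The approach has two stages: first establish a geometric decrease at the sequence of restart points $\{x_k = \tilde{x}(t_k)\}$, then use conservation of the mechanical energy on each piece both to pass from $x_k$ to arbitrary times $t$ and to bound the length of the trajectory.

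For the per-iteration decrease, note that on the $k$-th piece $[t_k, t_{k+1}]$ the trajectory solves Cauchy problem \eqref{eq:Cau_prob_nd_alt} with starting point $x_k$ and zero initial velocity; hence Proposition~\ref{prop:stop_time_low_up} applies to each piece and gives $\tau_k := t_{k+1}-t_k \leq \mathcal{T}_R$, while Lemma~\ref{lem:E_K_stop} yields $E_K(\tau_k) \geq \frac{1}{2L}|\nabla f(x_{k+1})|^2$, where $E_K$ denotes the kinetic energy along the piece. Conservation of the total mechanical energy and the vanishing of the velocity at $t_k$ give $f(x_k)-f(x_{k+1}) = E_K(\tau_k)$. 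Combining these with the Polyak--Lojasiewicz inequality of Lemma~\ref{lem:est_f_SC} applied at $x_{k+1}$, I would obtain
\[
f(x_k) - f(x_{k+1}) \geq \frac{\mu}{L}\bigl(f(x_{k+1})-f(x^*)\bigr),
\]
which rearranges into the contraction $f(x_{k+1})-f(x^*) \leq (1+\mu/L)^{-1}(f(x_k)-f(x^*))$. A straightforward induction then yields $f(x_k)-f(x^*) \leq (1+\mu/L)^{-k}(f(x_0)-f(x^*))$.

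To extend the estimate to an arbitrary $t\geq 0$, I would set $k := \lfloor t/\mathcal{T}_R\rfloor$; since $\tau_j \leq \mathcal{T}_R$ for every $j$, this ensures $t_k \leq k\mathcal{T}_R \leq t$. Energy conservation on the current piece together with $E_K \geq 0$ gives $f(\tilde{x}(t)) \leq f(x_k)$, and inequality \eqref{eq:conv_cont_thm} follows. For the length bound, on each piece $[t_k, t_{k+1}]$ energy conservation gives $|\dot{\tilde{x}}(t)| = \sqrt{2(f(x_k)-f(\tilde{x}(t)))} \leq \sqrt{2(f(x_k)-f(x^*))}$, and integrating over the piece (whose duration is at most $\mathcal{T}_R$) yields the upper bound $\mathcal{T}_R\sqrt{2}\,(1+\mu/L)^{-k/2}\sqrt{f(x_0)-f(x^*)}$ by the contraction above. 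Summing over $k$, the geometric series $\sum_{k\geq 0}(1+\mu/L)^{-k/2} = [1-(1+\mu/L)^{-1/2}]^{-1}$ must be bounded by $4L/\mu$: this follows from the elementary identity $1-(1+x)^{-1/2} = x/[\sqrt{1+x}(\sqrt{1+x}+1)]$ together with $\mu/L \leq 1$, which bounds the denominator by $4$. Putting the pieces together gives \eqref{eq:length_bound}.

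\textbf{Main obstacle.} The key technical ingredient is the per-iteration contraction, which relies in an essential way on Lemma~\ref{lem:E_K_stop} (whose proof uses the $C^{2,1}_L$ regularity via positive semidefiniteness of $L\,\mathrm{Id} - \nabla^2 f$). Beyond that, one must verify that Proposition~\ref{prop:stop_time_low_up} and Lemma~\ref{lem:E_K_stop} apply to every piece of the trajectory (which is immediate from the restart construction: each piece starts at rest), and carry out the geometric-sum estimate with the correct constant $4L/\mu$. The remaining steps are a routine chaining of inequalities.
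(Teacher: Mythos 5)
Your proposal is correct and follows essentially the same route as the paper's proof: the per-iteration contraction via Lemma~\ref{lem:E_K_stop}, energy conservation, and the Polyak--Lojasiewicz inequality; the uniform restart-time bound from Proposition~\ref{prop:stop_time_low_up} to count iterations; and the piecewise length estimate summed as a geometric series. Your explicit verification that $\sum_{k\geq 0}(1+\mu/L)^{-k/2}\leq 4L/\mu$ (using $\mu\leq L$) is a detail the paper leaves implicit, but it is the same argument.
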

\begin{proof}
We begin by proving \eqref{eq:conv_cont_thm}.
Let $t_1 >0$ be the first stopping instant.
Owing to the conservation of the total mechanical energy,
we have that
\[
f(x_0) - f(x(t_1)) = E_K(t_1).
\]
On the other hand, combining Lemma~\ref{lem:E_K_stop} and
Lemma~\ref{lem:est_f_SC}, we obtain that
\[
E_K(t_1) \geq \frac{1}{2{ L}}
|\nabla f(x(t_1))|^2 \geq \frac{\mu}{{ L}}
\left( f(x(t_1))-f(x^*) \right).
\]
Therefore, we deduce that
\begin{equation} \label{eq:dec_per_iter}
f(x(t_1))-f(x^*) \leq \left(  1 + 
\frac{\mu}{{ L}} \right)^{-1}
(f(x_0)-f(x^*)).
\end{equation}
The last inequality gives an estimate of the decrease-per-iteration
of the objective function.
Owing to Proposition~\ref{prop:stop_time_low_up},
 we have that the stopping
time is always bounded by $\mathcal{T}_R$.
This means that in the interval $[0,t]$
the number $k$ of restart iterations is greater or equal than
$\left\lfloor \frac{t}{\mathcal{T}_R} \right\rfloor$, where $\left\lfloor 
\cdot \right\rfloor$ denotes the integer part.
Using inequality \eqref{eq:dec_per_iter}, we obtain that 
\begin{align*}
f(\tilde{x}(t)) -f(x^*) 
&\leq \left(  1 + \frac{\mu}{{ L}}
 \right)^{-k}(f(x_0)-f(x^*))\\
& \leq \left(  1 + \frac{\mu}{{ L}}
 \right)^{-\left\lfloor \frac{t}{\mathcal{T}_R} \right\rfloor}
(f(x_0)-f(x^*)).
\end{align*}
{  This proves \eqref{eq:conv_cont_thm}.

We now study the length of the curve $t\mapsto
\tilde x(t)$ at each evolution interval 
$[t_k,t_{k+1}]$ for $k\geq 0$.
Using the conservation of the total mechanical 
energy and Proposition~\ref{prop:stop_time_low_up},
 we have that
\begin{equation} \label{eq:lenght_piece}
\int_{t_k}^{t_{k+1}}|\dot{\tilde{x}}(t)| \,dt
\leq \mathcal{T}_R \sqrt{2(f(\tilde x(t_k)
-f(x^*)}.
\end{equation}
Moreover, owing to \eqref{eq:dec_per_iter}, we obtain
that
\begin{equation} \label{eq:dec_it_k}
f(\tilde x(t_k)
-f(x^*) \leq 
\left(  1 + \frac{\mu}{L} \right)^{-k}
(f(x_0)-f(x^*))
\end{equation}
for every $k\geq 0$.
Combining \eqref{eq:lenght_piece} and
\eqref{eq:dec_it_k}, we have that
\begin{equation*}
\int_0^\infty |\dot{\tilde{x}}(t)|\,dt =
\sum_{k=0}^\infty \int_{t_k}^{t_{k+1}} |\dot{\tilde{x}}(t)|\,dt
 \leq 
4\sqrt{2}  \frac{L}{\mu}\mathcal{T}_R
\sqrt{f(x_0)-f(x^*)}.
\end{equation*}} 
This concludes the proof.  
\end{proof}

\begin{remark} In the case of quadratic functions,
we can compare our convergence result with the one
proved in \cite{TPL}.
Let $f:\R^n \to \R$ be of the form
\[
f(x) = \frac12 x^TAx,
\]
where $A\in \R^{n\times n}$ is symmetric and 
positive definite, and let
$0<\lambda_1 \leq \ldots \leq \lambda_n $ be the
eigenvalues of $A$.
Let $x_0 \in \R^n$ be the starting point and
let $t \mapsto \bar x(t)$ the curve obtained
following the construction proposed in \cite{TPL}.
Owing to Theorem~2 and Lemma~4 in \cite{TPL},
the following estimate holds:
\[
f(\bar x (t)) \leq 
\left(  1 + \frac{{ \lambda_1}}{
{ \lambda_n}} \right)^{-\left\lfloor \frac{t}{\mathcal{T}_R'} \right\rfloor}
(f(x_0)-f(x^*))
\]
where
\[
\mathcal{T}_R' = \frac{2n\pi}{\sqrt{\lambda_1}}.
\]
On the other hand, if we consider the curve 
$t \mapsto \tilde x (t)$ obtained with our
restart procedure, inequality
\eqref{eq:conv_cont_thm} holds with
\[
\mathcal{T}_R =32 \frac{ \lambda_n}{\lambda_1 \sqrt{\lambda_1}}.
\]
Hence we observe that 
{ $\mathcal{T}_R'$} is affected by the
dimension of the problem, while 
{ $\mathcal{T}_R$} is sensitive
to the condition number of the matrix $A$. 
\end{remark}

{ 
\begin{remark}
It is important to observe that
the estimate expressed in \eqref{eq:length_bound}
is invariant if we multiply 
the objective function $f$ 
by a factor $\nu>0$. This is not the case for
the estimate in \eqref{eq:conv_cont_thm},
because the multiplication of $f$ by
a factor $\nu>0$ does affect the parametrization
of the curve produced by the method, while
the trajectory remains unchanged.
For these reasons, we believe that,
when dealing with continuous-time optimization
methods,  the study of the
length of trajectories may be a useful tool.
\end{remark}}

\end{section}

\begin{section}{Discrete version of the method} \label{sec:discr}
In this section we develop a discrete version of the 
continuous-time
algorithm that we have described so far.
The basic idea is to rewrite the second order ODE
\[
\ddot{x}+\nabla f (x) =0, 
{ \,\,\,x(0) =x_0,
\,\,\, \dot x(0)=0,}
\]
as a first order ODE, by doubling the variables:
\begin{equation}
\begin{cases} \label{eq:ODE_cont}
\dot{x}=v, &{   x(0)=x_0,} \\
\dot{v}=-\nabla f (x), &{   v(0)=0}.
\end{cases}
\end{equation}
The differential equation \eqref{eq:ODE_cont} 
is a time-independent Hamiltonian system
and for its discretization we use
the Symplectic Euler scheme, due to its well-known
suitability (see e.g. \cite{Hairer,SJ19}), 
yielding to the following recurrence sequence
{  for $k\geq0$ given $(x_0,v_0)$}:
\begin{equation}
\begin{cases} \label{alg:cons}
v_{k+1}= v_k  -h\,\nabla f (x_k), \\
x_{k+1}= x_k +h\,v_{k+1},
\end{cases}
\end{equation}
where $h>0$ is the discretization step, and where $x_0$ 
is the starting point and $v_0=0$.
We recall that,
in general, the Symplectic Euler scheme for
time-independent Hamiltonian systems leads to
implicit discrete systems. However, for the
particular Hamiltonian function 
$\mathcal{H}(x,v)= \frac12 |v|^2 + f(x)$,
the discrete system \eqref{alg:cons} is
explicit.
Combining the equations of \eqref{alg:cons},
we have that {  for $k\geq 0$}
\begin{equation} \label{eq:upd_rule}
x_{k+1}= x_k - h^2 \nabla f (x_k) + hv_k.
\end{equation}
This shows that the sequence defined in 
\eqref{alg:cons} consists of an iteration
 of the classical gradient descent method with 
step $h^2$, plus the momentum term $hv_k$.
\begin{remark}
We observe that we can
rewrite \eqref{eq:upd_rule} as follow{ s}
for  { $k\geq 0$}:
\begin{equation} \label{eq:upd_rule_x}
x_{k+1}
= x_k - h^2 \nabla f (x_k) + (x_k -x_{k-1}),
\end{equation}
with {  $x_{-1}=x_0 - hv_0$}.

\noindent
The last expression is very similar to the
update rule of the heavy-ball method:
\[
x_{k+1}
= x_k - \alpha  \nabla f (x_k) + \beta(x_k -x_{k-1}).
\]
It is important to recall that the 
local-convergence result
for the heavy-ball method proved in 
\cite{P87} requires that $0\leq \beta<1$.
This means that we can not 
apply the aforementioned theorem to
the sequence obtained using \eqref{eq:upd_rule}.
However, this is not an issue, since,
as well as in the continuous-time case, the
convergence of our method relies
on a proper restart scheme.  
\end{remark}

{ 
\begin{remark}
We point out that the discretization of 
conservative system \eqref{eq:ODE_cont}
should not be understood as a method for providing
an accurate approximation of the continuous-time
solution. Nevertheless, it is natural to ask what
is the structure of the update rule obtained with
an higher-order symplectic scheme.
For instance, if we apply the symplectic second-order
St\"ormer-Verlet scheme
(see \cite[Chapter VI]{Hairer})
to the conservative system \eqref{eq:ODE_cont},
a simple computation shows that the sequence $(x_k)_k$
produced  satisfies the so called 
{\it leapfrog scheme}, i.e., the same recurrence
scheme as in \eqref{eq:upd_rule_x}, with
initialization 
$x_{-1} = x_0 - hv_0 - \frac{h^2}{2}\nabla
f(x_0)$.
In other words, we obtain that the sequence
$(x_k)_k$ produced by the St\"ormer-Verlet 
method satisfies the same recursive relation
as the one produced by the Symplectic Euler scheme,
but with a different trigger. 
\end{remark}}

{ In order to design a discrete restart
procedure, a first natural attempt is to formulate
a discrete-time version of the Maximum Mean
Dissipation. We recall that in the continuous-time
setting this procedure consists in restarting
the evolution as soon as the quantity
\begin{equation} \label{eq:Mean_Diss_sec_discr}
r(t):=\frac{|v(t)|^2}{2t} 
\end{equation}
attains a local maximum. Hence we can restart the
discrete system \eqref{alg:cons} as soon as the
following condition is met:
\begin{equation}\label{eq:discr_mmd_n}
\frac{|v_k|^2}{k-l} > \frac{|v_{k+1}|^2}{k+1-l},
\end{equation}
where $l$ is $0$ or it is the index when the latest
restart has occurred. On the other hand, a local 
maximum can be characterized by a change of sign
of the first derivative
\[
\dot r(t) = -\frac{|v(t)|^2+2t\nabla f(x(t))\cdot
v(t)}{2t},
\]
thus we can restart the discrete evolution when
\begin{equation}\label{eq:discr_mmd_n_2}
|v_{k+1}|^2 + 2(k+1-l)\nabla f(x_{k+1})\cdot
v_{k+1}>0,
\end{equation}
where $l$ is $0$ or it is the index when the latest
restart has occurred.
We call {\it Restart-Conservative Method with 
maximum mean dissipation} (RCM-mmd-r)
the procedure given by \eqref{alg:cons} with
restart condition \eqref{eq:discr_mmd_n}.
We call {\it Restart-Conservative Method with 
differential maximum mean dissipation} (RCM-mmd-dr)
the procedure given by \eqref{alg:cons} with
restart condition \eqref{eq:discr_mmd_n_2}.
These methods are described
respectively in Algorithm~\ref{Alg_mmd_1}
and Algorithm~\ref{Alg_mmd_2}.
\begin{algorithm}
\caption{Restart-Conservative Method 
with maximum mean dissipation (RCM-mmd-r)}
\begin{algorithmic}[1]
  \STATE $x \gets x_0 - h^2 \nabla f(x_0)$
  \STATE $v \gets -h\nabla f(x_0)$
  \STATE $i \gets 1$
  \STATE $l\gets i-1$
  \WHILE{$i \leq max\_iter$}
  \STATE $i \gets i+1$
  \STATE $v' \gets v - h\nabla f(x)$
  \STATE $x' \gets x + hv'$
  \IF{$|v'|^2/(i-l)<|v|^2/(i-1-l)$}
  \STATE $x \gets x -h^2 \nabla f(x)$
  \STATE $v \gets -h \nabla f(x)$
  \STATE $l\gets i-1$
  \ELSE
  \STATE $x \gets x'$
  \STATE $v \gets v' $
  \ENDIF  
  \ENDWHILE
\end{algorithmic}
\label{Alg_mmd_1}
\end{algorithm}

\begin{algorithm}
\caption{Restart-Conservative Method 
with differential maximum mean dissipation 
(RCM-mmd-dr)}
\begin{algorithmic}[1]
  \STATE $x \gets x_0 - h^2 \nabla f(x_0)$
  \STATE $v \gets -h\nabla f(x_0)$
  \STATE $i \gets 1$
  \STATE $l\gets i-1$
  \WHILE{$i \leq max\_iter$}
  \STATE $i \gets i+1$
  \STATE $v' \gets v - h\nabla f(x)$
  \STATE $x' \gets x + hv'$
  \IF{$|v'|^2 + 2(i-l)\nabla f(x')\cdot v'>0$}
  \STATE $x \gets x -h^2 \nabla f(x)$
  \STATE $v \gets -h \nabla f(x)$
  \STATE $l\gets i-1$
  \ELSE
  \STATE $x \gets x'$
  \STATE $v \gets v -h \nabla f(x) $
  \ENDIF
  \ENDWHILE
\end{algorithmic}
\label{Alg_mmd_2}
\end{algorithm}

Now we consider an alternative restart strategy,
for which we can prove a {\it qualitative}
global convergence result. Indeed a}
 natural request for 
{ our discrete}
algorithm is that, at { each} iteration, the
decrease of the objective function is greater
or equal than the decrease achieved by the
gradient descent method with the same step.
Let $f:\R^n \to \R$ be a $C^1$-convex function
and let us define 
$z_{k+1}= x_k - h^2 \nabla f (x_k)$, then,
owing to the convexity of $f$, we have that
\[
f(z_{k+1}) \geq f(z_{k+1} + hv_k) -
\nabla f(z_{k+1} + hv_k) \cdot hv_k.
\]
Recalling that $x_{k+1}=z_{k+1} + hv_k$, we deduce
that as long as the following inequality holds
\begin{equation} \label{eq:discr_rest}
\nabla f(x_{k+1}) \cdot v_k \leq 0,
\end{equation}
then we have that
\[
f(x_k - h^2 \nabla f(x_k)) \geq f(x_{k+1}).
\]
We can use inequality \eqref{eq:discr_rest} to
design a restart criterion for the 
sequence defined in \eqref{alg:cons}: when
\eqref{eq:discr_rest} is violated, i.e.,
\[
\nabla f(x_{k+1}) \cdot v_k > 0,
\] 
then we set
\[
x_{k+1}= x_k - h^2 \nabla f(x_k), \,\,\,\,\,\,
v_{k+1}= -h\nabla f(x_k).
\]
We call this procedure 
{\it Restart-Conservative Method with
gradient restart} (RCM-grad) and
we present its implementation in 
Algorithm~\ref{Alg_grad}.
This method coincides with the one described
in \cite{TPL}.
\begin{algorithm}
\caption{Restart-Conservative Method 
with gradient restart (RCM-grad)}
\begin{algorithmic}[1]
  \STATE $x \gets x_0$
  \STATE $v \gets 0$
  \WHILE{$i \leq max\_iter$}
  \STATE $x' \gets x -h^2 \nabla f(x) +hv$
  \IF{$\nabla f(x') \cdot v > 0$}
  \STATE $x \gets x -h^2 \nabla f(x)$
  \STATE $v \gets -h \nabla f(x)$
  \ELSE
  \STATE $x \gets x'$
  \STATE $v \gets v -h \nabla f(x) $
  \ENDIF
  \STATE $i \gets i+1$
  \ENDWHILE
\end{algorithmic}
\label{Alg_grad}
\end{algorithm}

\begin{remark}
It is interesting to observe
that the discrete restart condition
\begin{equation} \label{eq:ineq_rest_discr}
\nabla f(y_k +hv_k)\cdot v_k > 0
\end{equation}
is the discrete-time analogue of the inequality
\[
\nabla f(x(t))\cdot \dot{x}(t)\geq 0,
\]
which is satisfied as soon as the kinetic energy
function $E(t)=\frac12 |\dot{x}(t)|^2$ stops
growing.
{  This fact suggests that we may also 
consider a discrete restart strategy based on the
maximization of the kinetic energy. Namely, we
can restart the evolution of the discrete
system \eqref{alg:cons} as soon as 
\begin{equation} \label{eq:disc_kin_max_n}
\frac12 |v_k|^2 > \frac12 |v_{k+1}|^2.
\end{equation}
This restarted algorithm was proposed in \cite{SI},
where the authors proved some partial results
when dealing with quadratic objectives.
}
\end{remark}
The convergence of  RCM-grad
for { strictly}
convex functions {  in $C^{1,1}_L$} 
descends
directly from the convergence of the gradient
method, as shown in the following result.

\begin{theorem} \label{thm:conv_discr}
Let $f:\R^n \to \R$ be a 
{ $C^{1,1}_L$ strictly} convex
function 
{  that admits a unique minimizer}
$x^*\in \R^n$.
Let $(x_k)_{k\geq 0} \subset \R^n$ be the
sequence produced by RCM-grad with
time-step 
{ $0<h<\frac{\sqrt2}{\sqrt{L}}$}.
{  Then the sequence converges to the
minimizer $x^*$.}
\end{theorem}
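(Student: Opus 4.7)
The strategy is to reduce the convergence analysis to that of plain gradient descent with step $\alpha=h^2$, exploiting the very property that motivates the restart rule of Algorithm~\ref{Alg_grad}. First I would establish the \emph{gradient-dominance} estimate
\[
f(x_{k+1}) \le f\bigl(x_k - h^2 \nabla f(x_k)\bigr) \qquad \text{for every } k\ge 0.
\]
When the restart is triggered the algorithm explicitly sets $x_{k+1} := x_k - h^2\nabla f(x_k)$, making the inequality an equality. When the restart is not triggered, the convex inequality derived just before \eqref{eq:discr_rest} yields
\[
f\bigl(x_k - h^2\nabla f(x_k)\bigr) \ge f(x_{k+1}) - h\,\nabla f(x_{k+1})\cdot v_k \ge f(x_{k+1}),
\]
because in that case $\nabla f(x_{k+1})\cdot v_k \le 0$. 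Combining this with the standard descent lemma for $L$-smooth functions, namely $f\bigl(x - h^2\nabla f(x)\bigr) \le f(x) - h^2\bigl(1-\tfrac{L}{2}h^2\bigr)|\nabla f(x)|^2$, I obtain the key inequality
\[
f(x_{k+1}) \le f(x_k) - c\,|\nabla f(x_k)|^2,\qquad c := h^2\Bigl(1 - \tfrac{L}{2}h^2\Bigr) > 0,
\]
the positivity of $c$ being guaranteed by the hypothesis $h<\sqrt{2}/\sqrt{L}$.

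From this I would extract three consequences. First, $\bigl(f(x_k)\bigr)_k$ is non-increasing, so the iterates remain in the sublevel set $S := \{x\in\R^n : f(x)\le f(x_0)\}$. Second, $S$ is bounded: if not, one can extract a sequence $y_n\in S$ with $|y_n-x^*|\to\infty$ and $(y_n-x^*)/|y_n-x^*|\to v$, $|v|=1$; convexity on the segment from $x^*$ to $y_n$ gives, in the limit, $f(x^*+sv) \le f(x^*)$ for every $s>0$, which is incompatible with strict convexity of $t\mapsto f(x^*+tv)$ together with the uniqueness of the minimizer. Third, telescoping the descent inequality gives
\[
\sum_{k=0}^{\infty} |\nabla f(x_k)|^2 \le \frac{f(x_0)-f(x^*)}{c} < \infty,
\]
so in particular $|\nabla f(x_k)|\to 0$.

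Finally, boundedness of $(x_k)_k$ together with $|\nabla f(x_k)|\to 0$ and continuity of $\nabla f$ forces every accumulation point $\bar x$ of $(x_k)_k$ to satisfy $\nabla f(\bar x)=0$. Strict convexity makes $x^*$ the unique critical point, so every accumulation point equals $x^*$, which upgrades by the standard compactness argument to convergence of the full sequence: $x_k\to x^*$. The main subtlety is the boundedness of $S$: coercivity is not assumed, and strict convexity combined with uniqueness of the minimizer is just strong enough to rule out escape to infinity along sublevel sets; everything else is a routine adaptation of the classical gradient-descent convergence proof.
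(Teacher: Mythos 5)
Your proposal is correct and follows essentially the same route as the paper: the restart rule guarantees $f(x_{k+1})\leq f(x_k-h^2\nabla f(x_k))$ at every iteration, the descent lemma then gives the per-step decrease $-h^2(1-\tfrac{L}{2}h^2)|\nabla f(x_k)|^2$, and convergence follows from boundedness of the sublevel set together with a standard gradient-descent argument. The only difference is cosmetic: you spell out the coercivity of $f$ and the telescoping/accumulation-point step explicitly, whereas the paper asserts the former and delegates the latter to \cite[Theorem~2.1.14]{N18}.
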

\begin{proof}
{  Owing to the construction of
RCM-grad, we have that, for every $k\geq 0$
the following inequality holds:
\[
f(x_{k+1}) -f(x_k)\leq f(x_k - h^2 \nabla f(x_k))
-f(x_k).
\]
By the fact that $f$ is a convex function in
$C^{1,1}_L$, we deduce
that $f(x+\nu)\leq 
f(x) + \langle \nabla f(x),\nu \rangle
+ \frac{L}{2}|\nu|^2$ for every $x,\nu \in \R^n$
(see, e.g., 
\cite[Theorem~2.1.5]{N18}).
This implies that
\begin{equation} \label{eq:estimate}
f(x_{k+1}) - f(x_k) \leq 
-\omega (h)|\nabla f(x_k)|^2,
\end{equation}
where
\[
\omega(h) = h^2\left(1-\frac{L}{2}h^2\right).
\]
Since we have that $\omega(h)>0$ when 
$0<h<\frac{\sqrt2}{\sqrt{L}}$, then
$f(x_{k+1})\leq f(x_k)$ for every $k\geq0$.
Moreover, we observe that the function $f$ is 
coercive, since it is assumed to be strictly convex
and to admit a minimizer. Therefore, the sequence
$(x_k)_{k\geq 0}\subset \{ x\in \R^n:\, f(x)
\leq f(x_0) \}$ is bounded.
Hence we can repeat the argument of 
\cite[Theorem~2.1.14]{N18} to deduce that
$f(x_k) \to f(x^*)$ as $k\to \infty$.
Using again the boundedness of the sequence
$(x_k)_{k\geq0}$ and the fact that $f$ admits
a unique minimizer, we obtain that 
$x_k \to x^*$ as $k\to\infty$.}  
\end{proof}

{ 
\begin{remark}
If we define  $e_k:=f(x_k)- f(x^*)$
for every $k\geq0$, and if we assume   
$f \in \mathscr{S}^{1,1}_{\mu,L}$, then, combining  
\eqref{eq:grad_vs_f_SC} with  \eqref{eq:estimate},
we obtain $e_{k+1} \le e_k (1 - 2\mu \omega(h))$ for 
$0<h<\sqrt{{2}/{L}}$. 
Choosing $\bar{h}={1}/{\sqrt{L}}$ we have
as a byproduct the linear convergence estimate
$ e_{k+1}\le e_k ( 1-\frac{\mu}{L})$.
We point out that Theorem~\ref{thm:conv_discr} and the 
previous estimate
should be understood as a {\it qualitative}
global convergence result.
Indeed, on one hand, we may deduce that the 
convergence rate of RCM-grad is {\it at least}
as fast as the convergence rate of the classical 
gradient descent. On the other hand,
this estimate of the performances of RCM-grad is
very pessimistic, as shown in the numerical
experiments of Section~\ref{sec_num_test}.
To the best of our knowledge,
Theorem~\ref{thm:conv_discr} is the first 
convergence result for an 
optimization algorithm based on
the discretization of the conservative dynamics, 
although it does not provide
the convergence rate observed in 
the numerical experiments developed in the paper. 
\end{remark}
}

\begin{subsection}{Choice of the time-step}
The proof of the convergence of RCM-grad 
holds true for any choice of the time-step
$h$ such that the gradient method with step-size
$h^2$ is convergent.
In this subsection, we provide considerations
about the choice of time-step $h$ by studying 
the one-dimensional quadratic case.
Let us fix $a>0$ and let us consider 
$f(x) = \frac12 a x^2$. 
In this case the 
sequences $(x_k)_{k\geq 0}$ and $(v_k)_{k\geq 0}$ 
are recursively defined as
\begin{equation} \label{eq:seq_quad}
\begin{cases} 
v_{k+1}= v_k  -ha\, x_k, \\
x_{k+1}= x_k +h\,v_{k+1}.
\end{cases}
\end{equation}
It is easy to check that the following discrete conservation holds:
\begin{equation*}
\frac{1}{2}v_k^2 + \frac{a}{2}x_k^2 - \frac{1}{2}ah\, x_kv_k = \frac{a}{2}x_0^2.
\end{equation*}
This implies that the sequence of points $(x_k,\,v_k)_{k\geq 0} \in \R^2$ lies on 
the following
conic curve in the $(x,v)$-plane:
\begin{equation} \label{eq:conic_seq}
\frac{1}{2}v^2 + \frac{a}{2}x^2 + \frac{1}{2}ah\, xv = c.
\end{equation}
It is natural to set $h$ such that
 the curve defined by \eqref{eq:conic_seq} is
compact. Using the characterization
 of conic curves in the plane,
we obtain that
\begin{equation} \label{eq:h_bound_1}
h<\frac{2}{ \sqrt{a}}.
\end{equation}
Another natural request is to impose that 
$|x_1|<|x_0|$ and that $x_0 x_1 \geq 0$.
Using \eqref{eq:seq_quad}, we have that 
$x_1 = (1 -ah^2)x_0$, so we impose that
$1>1 -ah^2>0$, and we deduce that
\begin{equation} \label{eq:h_bound_2}
h<\frac{1}{\sqrt{a}}.
\end{equation}
For a generic
{ $C^{1,1}_L$} convex
function $f: \R^n \to \R$, an heuristic rule for 
designing $h$ could be to use \eqref{eq:h_bound_2}, 
where $a$ is a constant that bounds from above 
{  the Lipschitz 
constant of $\nabla f$}.
\end{subsection}

\begin{subsection}{Nesterov Accelerated Gradient 
methods with restart} \label{subsec:NAG-restart}
We recall that the most efficient algorithms
for convex optimization problems
belong to the  the family of the
Nesterov Accelerated Gradient methods 
(see \cite{N83}, \cite{N18}).
We use the acronym NAG to refer to this family.
Namely, when the problem consists in minimizing a
function 
{ in $\mathscr{S}^{1,1}_{\mu,L}$
whose constant $\mu$ is known},
the most performing 
algorithm is called NAG-SC and it is defined as
\begin{equation} \label{alg:NAG-sc}
\begin{cases}
y_{k+1} = x_k -s\nabla f (x_k), \\
x_{k+1} = y_{k+1} + \frac{1 - \sqrt{\mu s}}{1 + \sqrt{\mu s}}(y_{k+1} - y_k), 
\end{cases}
\end{equation} 
with $0<s\leq \frac{1}{ L}$.

\noindent
On the other hand, when dealing with a 
non-strongly convex function 
{ in $C^1_L$},
the most suitable algorithm is 
NAG-C, whose update rule is
\begin{equation} \label{alg:NAG-c}
\begin{cases}
y_{k+1} = x_k -s\nabla f (x_k), \\
x_{k+1} = y_{k+1} + \frac{k}{k+3}(y_{k+1} - y_k), 
\end{cases}
\end{equation} 
where, as above, $0<s\leq\frac{1}{ L}$.

\noindent
In order to boost the convergence of NAG-C
via adaptive restart,
in \cite{OC} O'Donoghue and Cand\`es suggest
some schemes reproducing in a discrete form
the requirement that 
$f(x(t))$ is monotone decreasing along 
the curve $t\mapsto x(t)$, solution of an ODE
with suitable friction term. More precisely, they
proposed to restart \eqref{alg:NAG-c} as
soon as $f(y_{k+1})>f(y_k)$ ({\it function
scheme}), or as soon as 
$\nabla f(x_{k+1}) \cdot (y_{k+1} - y_k)>0$
({\it gradient scheme}). 
The intuitive idea that lies behind the 
latter scheme is to restart the evolution
when the momentum and 
the negative direction of the gradient form
an obtuse angle.
We recall that the update of $y_{k+1}$
coincides with a step of the gradient method,
namely $y_{k+1}=x_k -s\nabla f (x_k)$.
Hence we have that the step of NAG-C is given
by
\[
x_{k+1} = y_{k+1} + w_k,
\]
where
\[
w_k = \beta_k (y_{k+1} - y_{k}) \,\,\,
\mbox{and }\,\,
\beta_k= \frac{k}{k+3}.
\]
Using these facts, the gradient restart scheme
for NAG-C can be better motivated. Indeed,
as done before for the conservative algorithm,
we can impose that
each iteration of NAG-C achieves a greater 
decrease of the objective function
than a step of the gradient method:
\begin{equation}\label{eq:NAG_vs_grad}
f(y_{k+1}) \geq f(y_{k+1} + w_k).
\end{equation}
If we apply {\it verbatim} the
reasoning done { before}
about the restart of the conservative
method, then for every $C^1$-convex function
we { have that
\begin{equation*}
f(y_{k+1}) \geq f(y_{k+1} + w_k)
- \nabla f(y_{k+1}+ w_k) \cdot w_k.
\end{equation*}
Recalling that $x_{k+1}=y_{k+1}+ w_k$
and that $w_k= \beta_k(y_{k+1}-y_k)$, we deduce that
\eqref{eq:NAG_vs_grad} is satisfied
as long as
the following condition holds:
\begin{equation} \label{eq:grad_rest_NAG}
\nabla f(x_{k+1})\cdot(y_{k+1}-y_k)\leq 0.
\end{equation}
If we restart the method as soon as 
\eqref{eq:grad_rest_NAG} is violated, we recover}
the gradient restart
scheme proposed by O'Donoghue and Cand\`es
in \cite{OC}.
In conclusion, this proves that the NAG-C
with the gradient restart scheme
achieves, at {\it { each} iteration},
an effective acceleration
with respect to the classical gradient descent.
In the experiments reported in \cite{OC},
the authors show  that the 
gradient restart scheme
has better performances than the function 
restart scheme.
For these reasons, in the numerical 
tests reported in Section~\ref{sec_num_test}
we used NAG-C with gradient
restart as benchmark for the convergence
rate. From now on, we refer to this method
as  NAG-C-restart.
An important feature of NAG-C-restart
is that it is suitable for strongly convex
minimization when the strong convexity parameter of
the objective is not available, as observed 
in \cite{N13} in the framework of composite 
optimization. This aspect was further studied in
\cite{FQ19}, where the authors proved linear
convergence results for restarted accelerated
methods when the objective function
satisfies a {\it local quadratic growth condition}.
In \cite{FQ20} this was extended to accelerated
coordinate descent methods.
Finally,
we observe that the gradient restart
scheme has been recently used in \cite{K18}
in order to accelerate the convergence of the
Optimized Gradient Method introduced in \cite{K17}.

\end{subsection}

\end{section}

\begin{section}{Numerical tests} \label{sec_num_test}

In this section we describe the numerical 
experiments that we used to test the
efficiency of our method. We used
different variants of NAG as comparison:
in particular, NAG-C-restart 
(see Subsection~\ref{subsec:NAG-restart})
is the benchmark
of our numerical tests.

\begin{subsection}{Quadratic function}
\label{subsec:quad}
We considered a quadratic function $f: \R^n \to \R$ of the form
\[
f(x) = \frac12 x^TAx + b^Tx
\]
where $n=1000$, $A$ is a symmetric positive
definite matrix and $b \in \R^n$ is sampled
using $\mathcal{N}(0,1)$.
The eigenvalues of $A$ are  randomly chosen using an uniform distribution
over $[0.03, 15]$.
 The Lipschitz constant of $\nabla f$ is $
 \lambda_{\max}$, the largest eigenvalue of $A$. 
The function $f$ is $\mu$-strongly convex
for every $0< \mu \leq \lambda_{\min}$, 
where $\lambda_{\min}$ is the minimum
eigenvalue of $A$. 
For each experiment, we run the following algorithms:
\begin{itemize}
\item NAG-SC with $s= \frac{1}{\lambda_{\max}}$
and $\mu = \lambda_{\min}$. This is the sharpest possible setting of the
parameters for the given problem;
\item NAG-SC with $s= \frac{1}{\lambda_{\max}}$
and $\mu = \frac{\lambda_{\min}}{3}$. This simulates an underestimation of the
strongly-convexity constant;
\item NAG-C-restart
 with
$s= \frac{1}{\lambda_{\max}}$;
\item RCM-grad with
 $h=\frac{1}{\sqrt{\lambda_{\max}}}$;
 \item RCM-mmd-dr
 with
 $h=\frac{1}{\sqrt{\lambda_{\max}}}$;
\item RCM-mmd-r
 with
 $h=\frac{1}{\sqrt{\lambda_{\max}}}$;
 \item RCM-kin
 with
 $h=\frac{1}{\sqrt{\lambda_{\max}}}$.
\end{itemize}
The results are described in Figure
\ref{fig:quadratic}.
This test shows that, among the Restart-Conservative
Methods, RCM-grad and RCM-mmd-dr are
the most performing.
Moreover, RCM-grad and RCM-mmd-dr achieve a faster
convergence rate than NAG-SC-2 when
a sharp estimate of the strongly-convexity constant 
is not available.
We also observe that both RCM-grad and
RCM-mmd-dr
have on average slightly better  performances 
than NAG-C-restart.
However, when the strongly-convexity constant is 
known, NAG-SC has better performances
than the other algorithms.
Finally,
RCM-mmd-r and RCM-kin
have a slower convergence rate 
than other Restart-Conservative methods.
\begin{center}
\begin{figure}
\includegraphics[width=5.9cm]{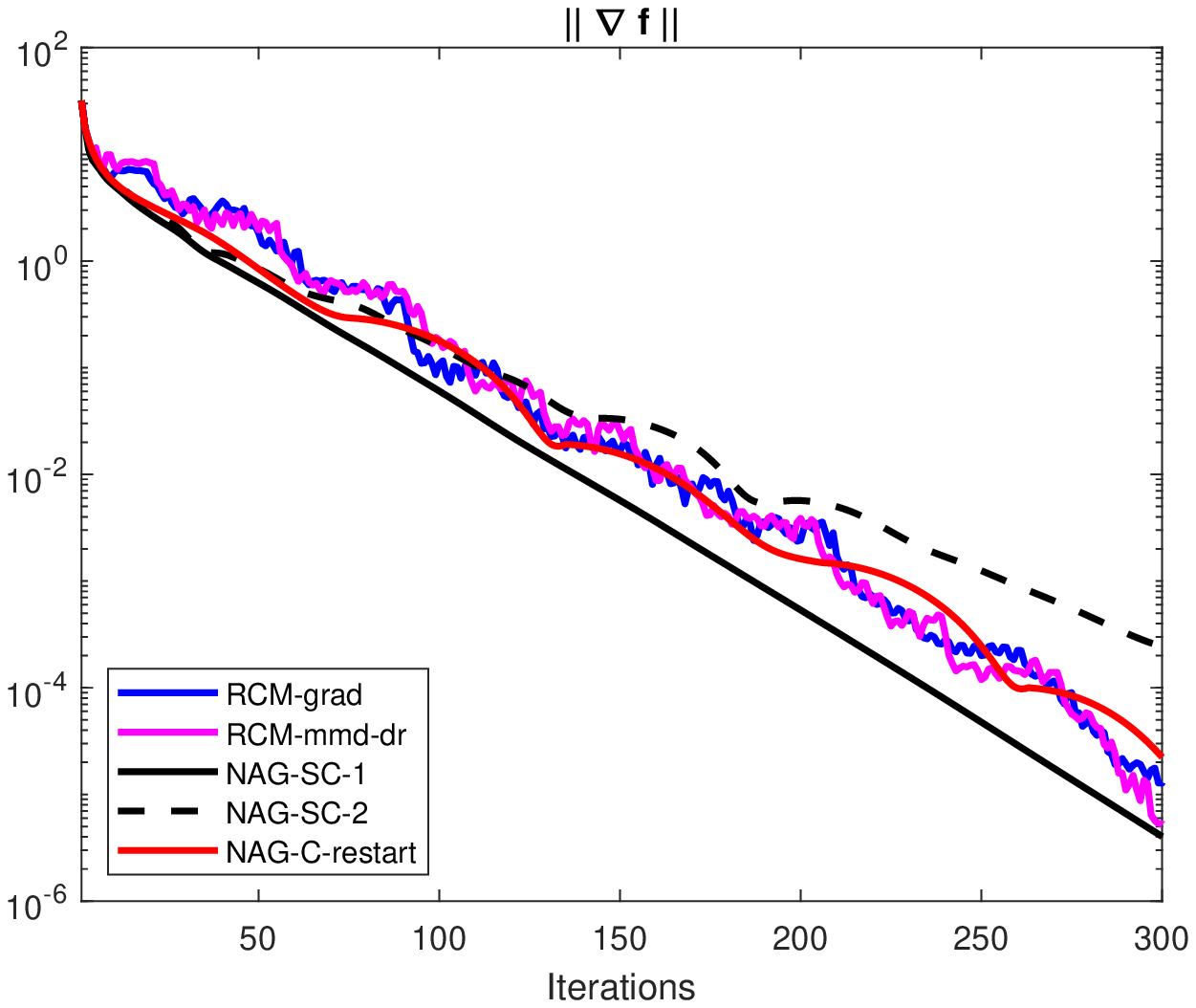}
\includegraphics[width=5.9cm]{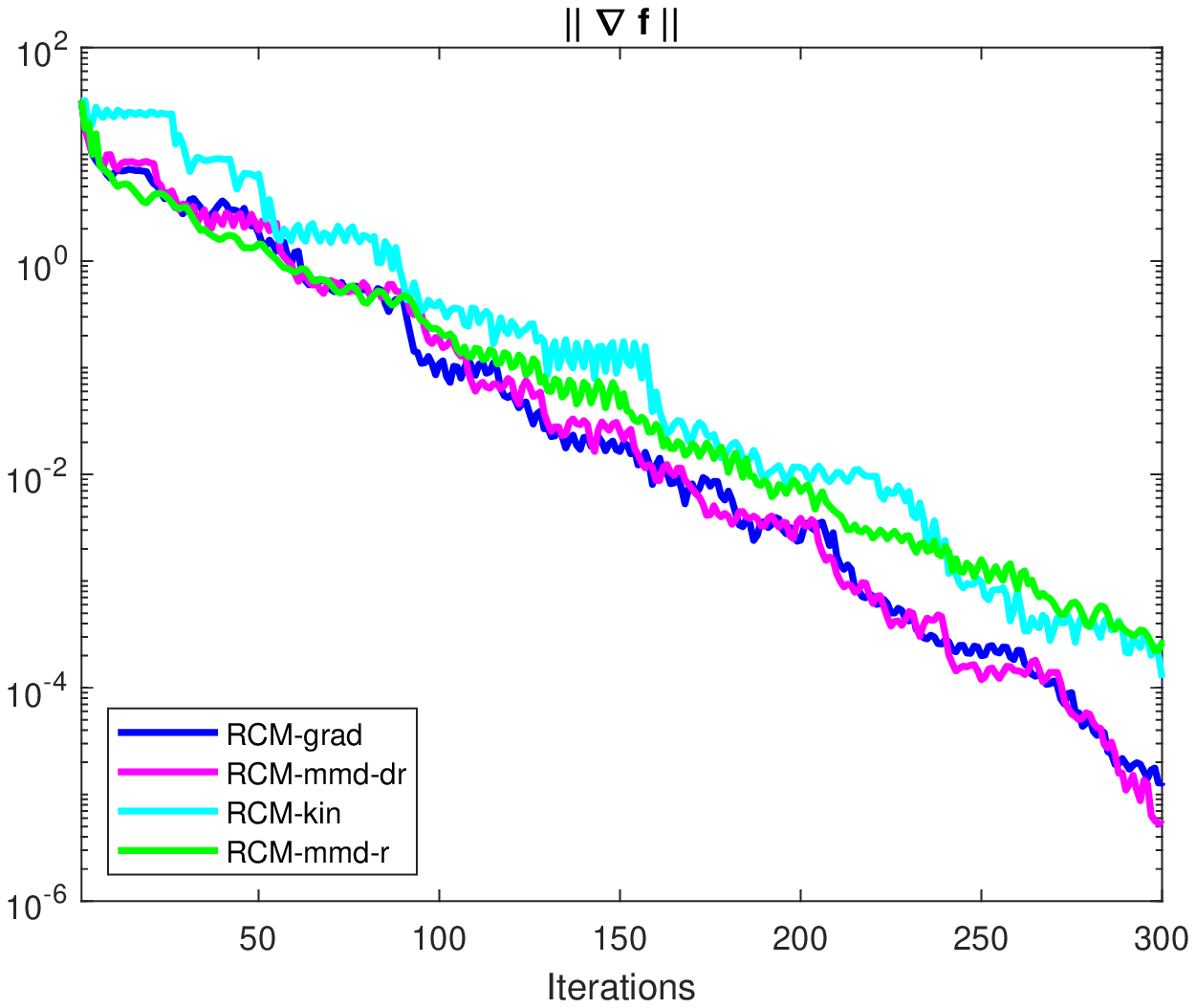}\\
\includegraphics[width=5.9cm]{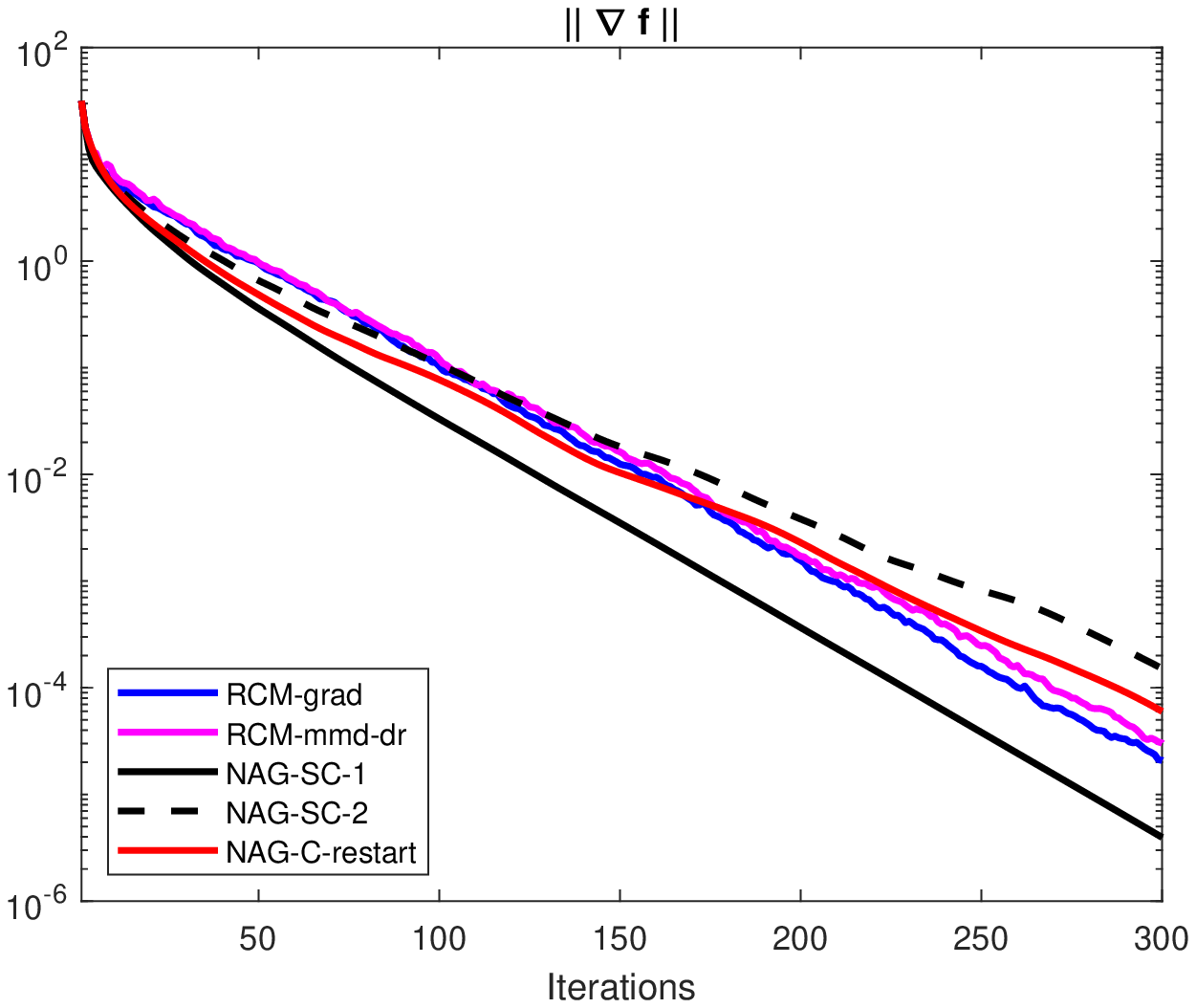}
\includegraphics[width=5.9cm]{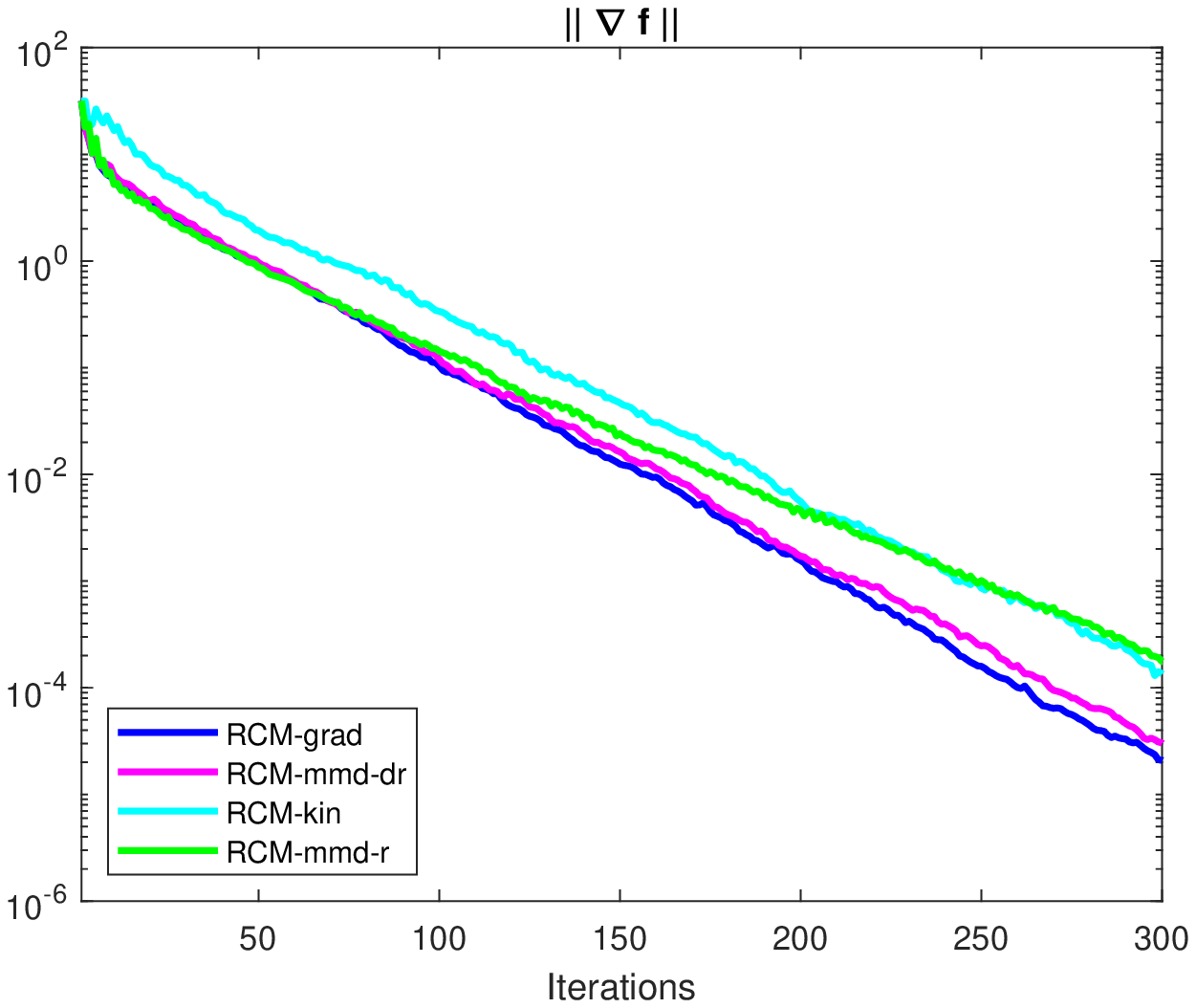}
\caption{Quadratic case. 
At the top we report the result of a single experiment,
at the bottom the average over 50 repetitions of the
experiment.
The plots at
left-hand side shows the
decay of the objective function achieved by
RCM-grad (blue), RCM-mmd-dr (magenta),
 NAG-SC with exact
strongly-convexity constant (NAG-SC-1, black), 
NAG-SC with underestimated
strongly-convexity constant (NAG-SC-2, dashed), and
NAG-C-restart (red).
At right-hand side we compare the convergence rate
of the Restart-Conservative method
 with different restart schemes.
We observe that on average
 RCM-grad and RCM-mmd-dr
 have a slightly better performances  than the benchmark NAG-C-restart.}
\label{fig:quadratic}
\end{figure}
\end{center}
\end{subsection}

\begin{subsection}{Logistic regression}
\label{subsec:logistic}
We considered a typical logistic regression
problem.
First of all, 
we randomly generated the vector 
$x_0 \in \R^n$
using $\mathcal{N}(0, 0.01)$. Then we
independently
sampled the entries of the vector 
$y =\left( y_1, \ldots , y_m \right)^T
\in \{0, 1 \}^m$ using the law
$$\mathbb{P}(Y_i = 1) 
= \frac{1}{1+e^{-a_i^Tx_0}}, $$
where $A = \left( a_1, \ldots , a_n \right)$
was a $n \times m$ matrix with i.i.d. entries
generated with the $\mathcal{N}(0,1)$
distribution. Supposing that $y$ and $A$
were known, we tried to recover $x_0$
using the log-likelyhood maximization.
This is equivalent to the
minimization of the function
\begin{equation} \label{eq:logistic}
f(x) = \sum_{i=1}^m
\left( 
(1-y_i) a_i^Tx + \log \left( 1+ e^{-a_i^Tx}  
\right)
\right)
\end{equation}
We set $n=100$ and $m=500$.
Let $L$
 be the Lipschitz constant of the function
$\nabla f$.
We recall
that function \eqref{eq:logistic} is
convex but not strongly convex.
We minimized
the right-hand-side of \eqref{eq:logistic}
using the following algorithms:
\begin{itemize}
\item Classical gradient descent method with
step-size $s = \frac{1}{L}$;
\item NAG-C-restart and
$s= \frac{1}{L}$;
\item RCM-grad with $h=\frac{1}{\sqrt{L}}$;
\item RCM-mmd-dr with $h=\frac{1}{\sqrt{L}}$;
\item RCM-mmd-r with $h=\frac{1}{\sqrt{{ L}}}$;
\item RCM-kin with $h=\frac{1}{\sqrt{{ L}}}$;
\end{itemize}
The results of the experiment are presented in 
Figure \ref{fig:Logistic}.
We observe that the most performing methods
are RCM-grad and RCM-mmd-dr  and they show  
similar behavior in both single and  average runs.
Moreover, RCM-grad and RCM-mmd-dr   have  faster convergence rates than 
NAG-C-restart. Among the RCM methods the  RCM-kin and RCM-mmd-r are the  slowest.
\begin{center}
\begin{figure}
\includegraphics[width=5.9cm]{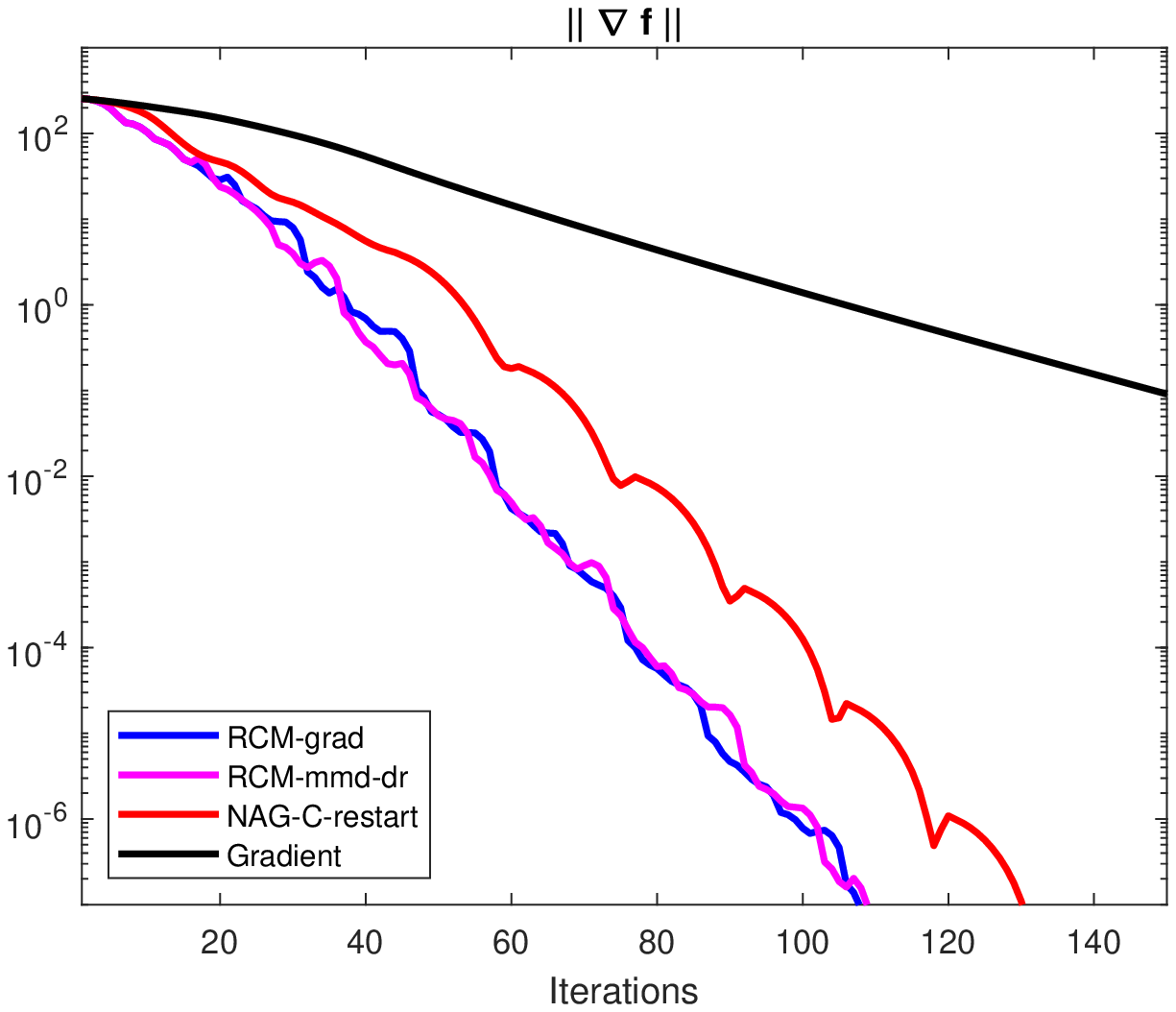}
\includegraphics[width=5.9cm]{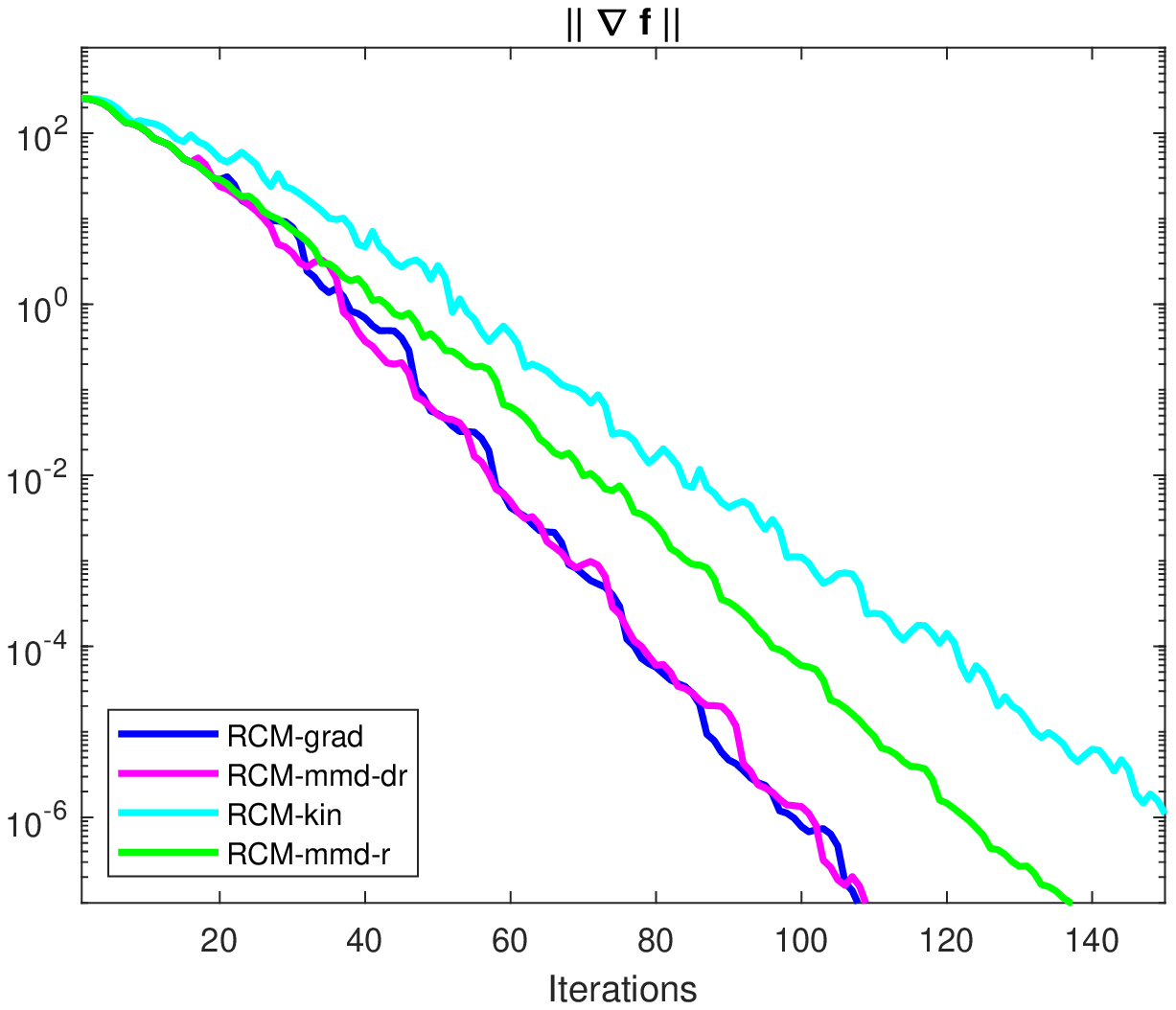}\\
\includegraphics[width=5.9cm]{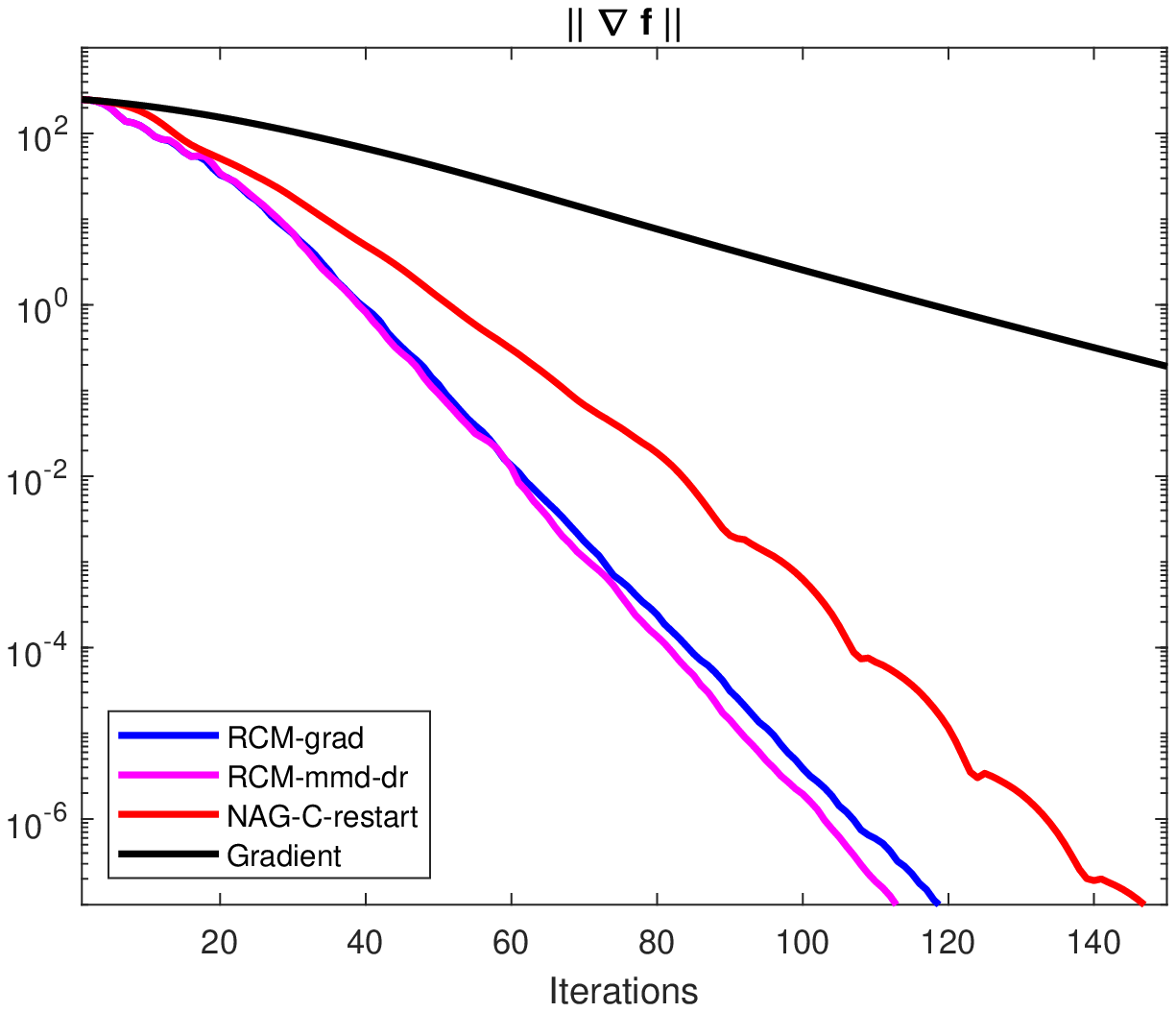}
\includegraphics[width=5.9cm]{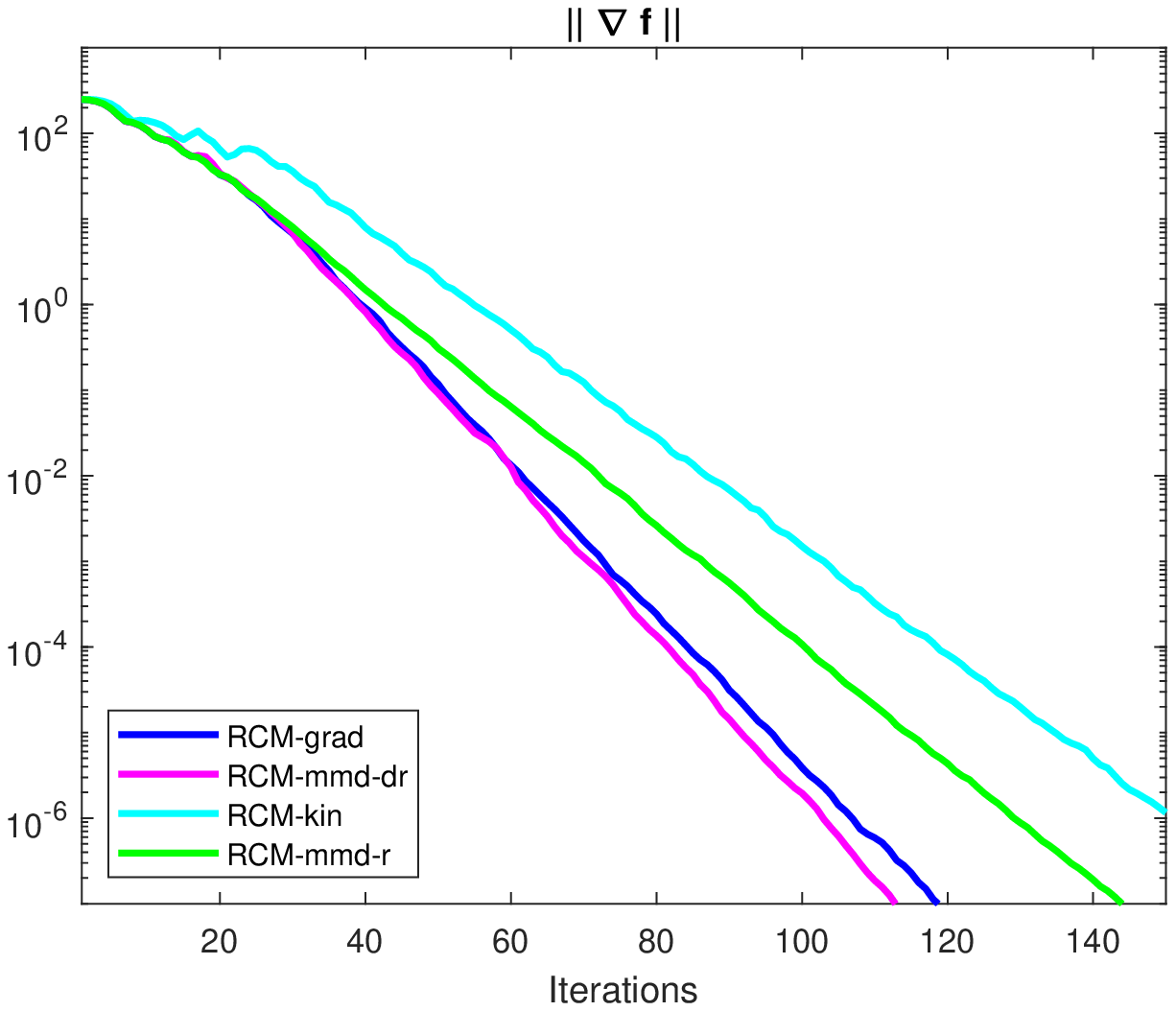}

\caption{Logistic regression. 
At the top we report the result of a single experiment,
at the bottom the average over 50 repetitions of the
experiment.
The plots
at left-hand side shows
the decay of the norm of the gradient of the 
objective function achieved 
by RCM-grad (blue), RCM-mmd (magenta), 
NAG-C-restart (red), and
the classical gradient descent (black).
At right-hand side we compare the convergence rate
of RCM with different restart schemes.
RCM-grad and RCM-mmd-dr seem 
to have faster convergence rate
than the benchmark NAG-C-restart.}
\label{fig:Logistic}
\end{figure}
\end{center}
\end{subsection}

\begin{subsection}{LogSumExp}
\label{subsec:logsumexp}
We considered the
non-strongly convex function $f:\R^n \to \R$
defined as 
\begin{equation} \label{eq:LogSumExp}
f(x) = \rho \log \left(
\sum_{i=1}^m\exp \left( 
\frac{a_i^Tx - b_i }{\rho}\right)  
\right),
\end{equation}
where $A = (a_1, \ldots, a_m)$ was a $n\times m$
matrix whose entries were independently
generated using the normal distribution 
$\mathcal{N}(0,1)$. The vector $b\in \R^m$
was sampled using $\mathcal{N}(0,1)$. We set
$n=50$, $m=200$ and $\rho = 1$.
Let ${ L}$ be the Lipschitz constant
of the function
$\nabla f$.
We minimized $f$
using the following algorithms:
\begin{itemize}
\item Classical gradient descent method with
step-size $s = \frac{1}{{ L}}$;
\item NAG-C-restart with
$s= \frac{1}{{ L}}$;
\item RCM-grad with  
$h=\frac{1}{\sqrt{{ L}}}$;
\item RCM-mmd-dr with  
$h=\frac{1}{\sqrt{{ L}}}$;
\item RCM-mmd-r with  
$h=\frac{1}{\sqrt{{ L}}}$;
\item RCM-kin with  
$h=\frac{1}{\sqrt{{ L}}}$;
\end{itemize}
The results are shown in Figure \ref{fig:LogSumExp}. 
 We observe that in the presented  single run the RCM-grad 
shows the best performance. 
In the  average
RCM-grad and RCM-mmd-dr 
exhibit very similar behaviors
 and have a slightly 
better performances than the benchmark
NAG-C-restart.
\begin{center}
\begin{figure}
\includegraphics[width=5.9cm]{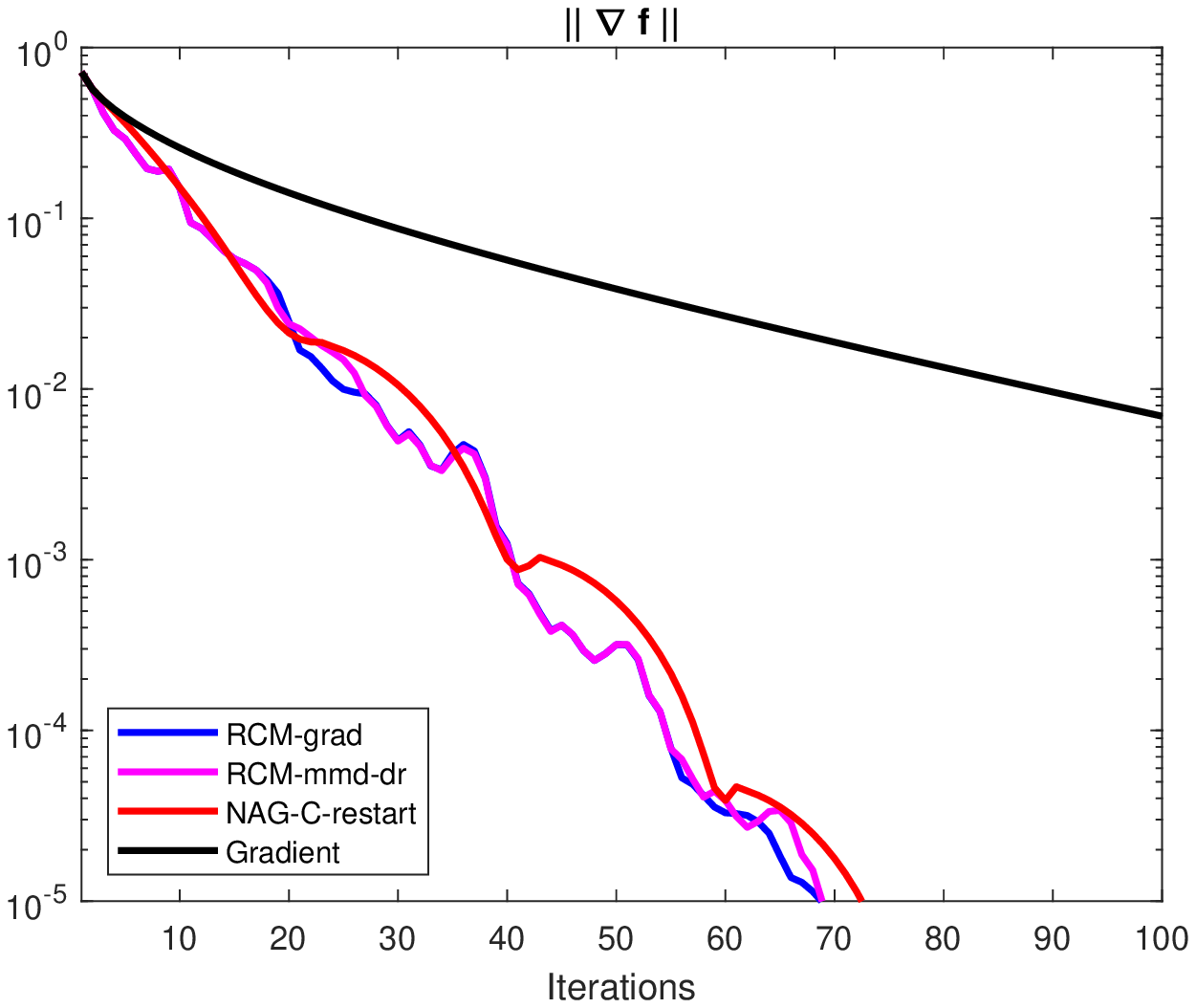}
\includegraphics[width=5.9cm]{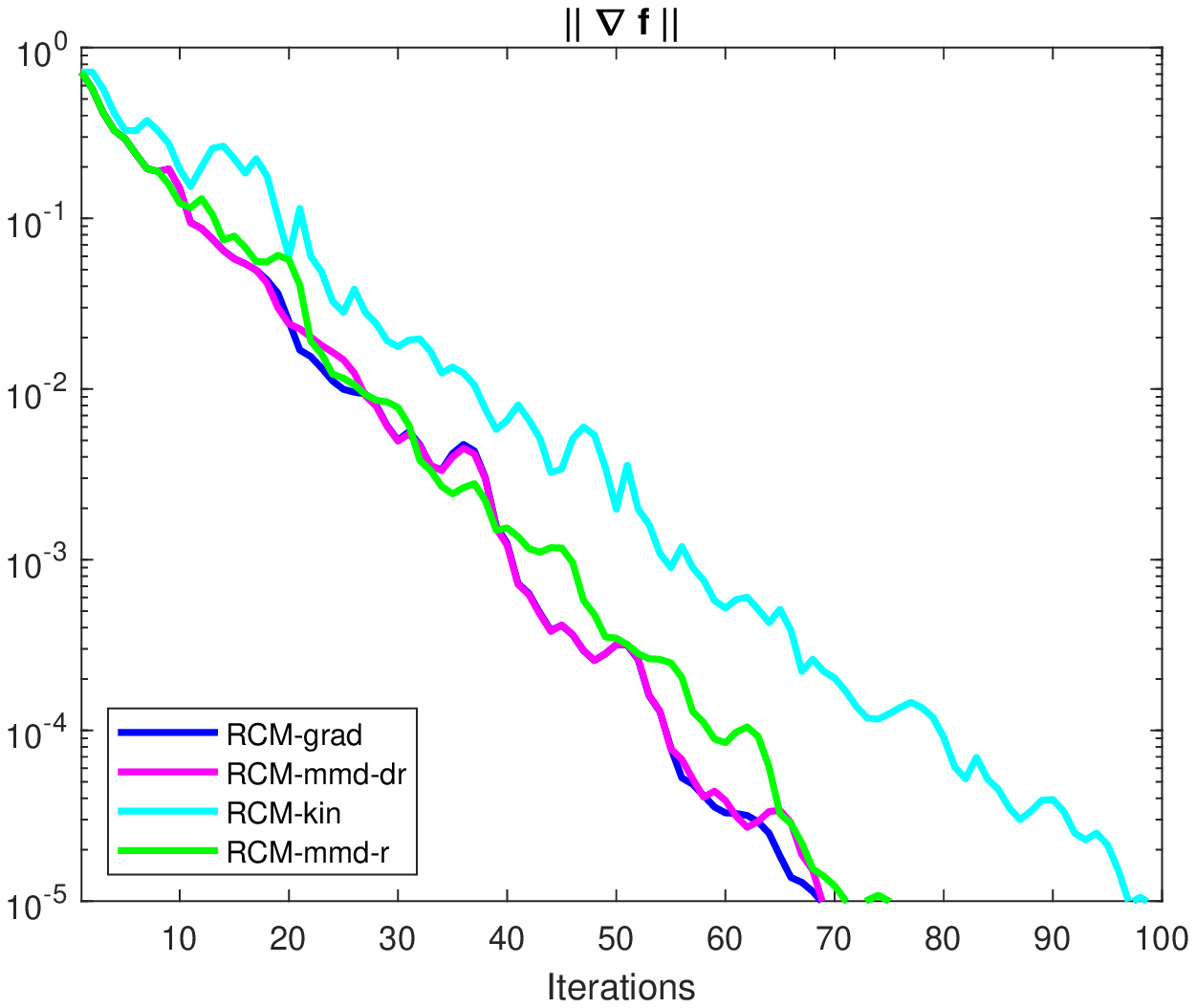}\\
\includegraphics[width=5.9cm]{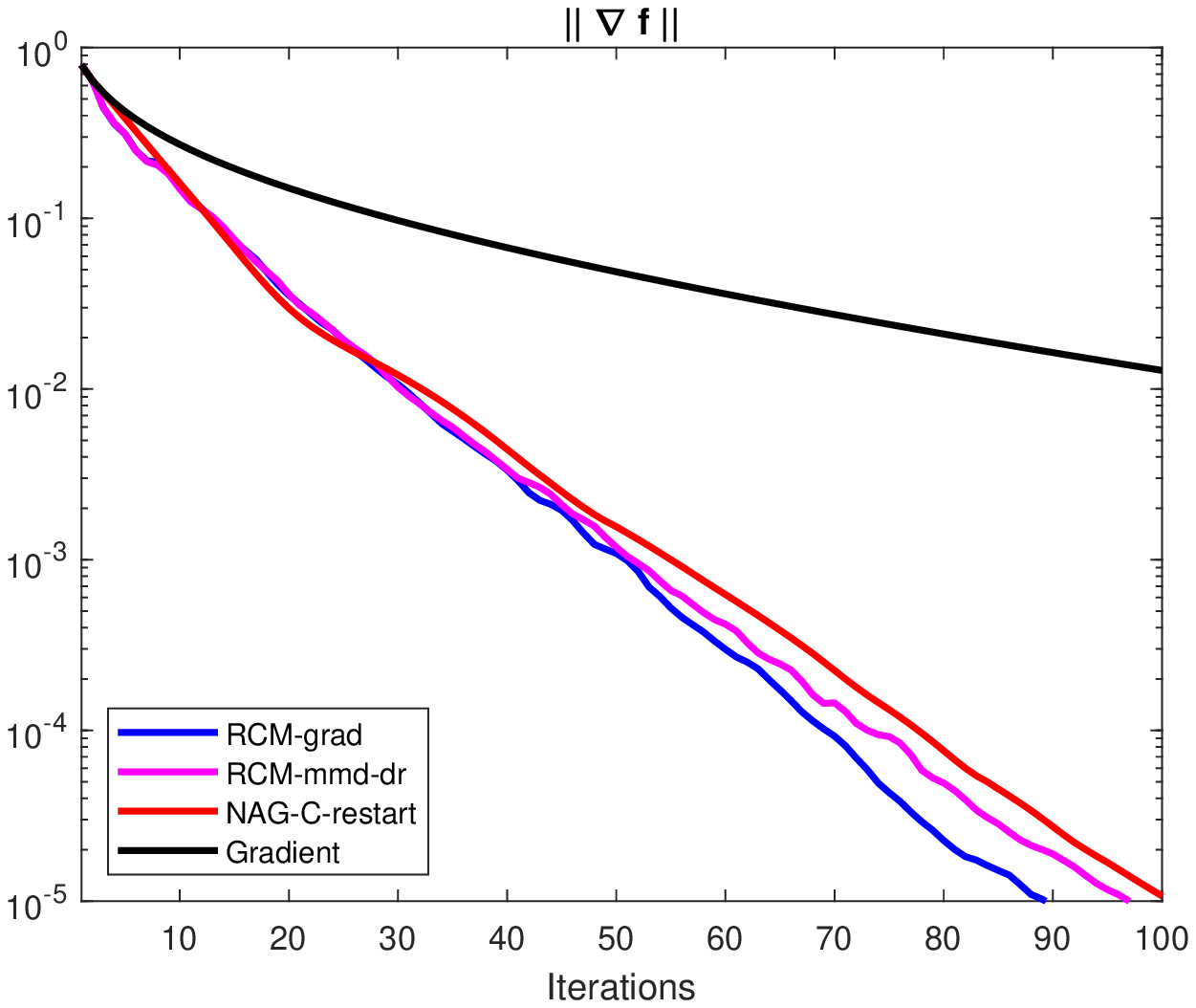}
\includegraphics[width=5.9cm]{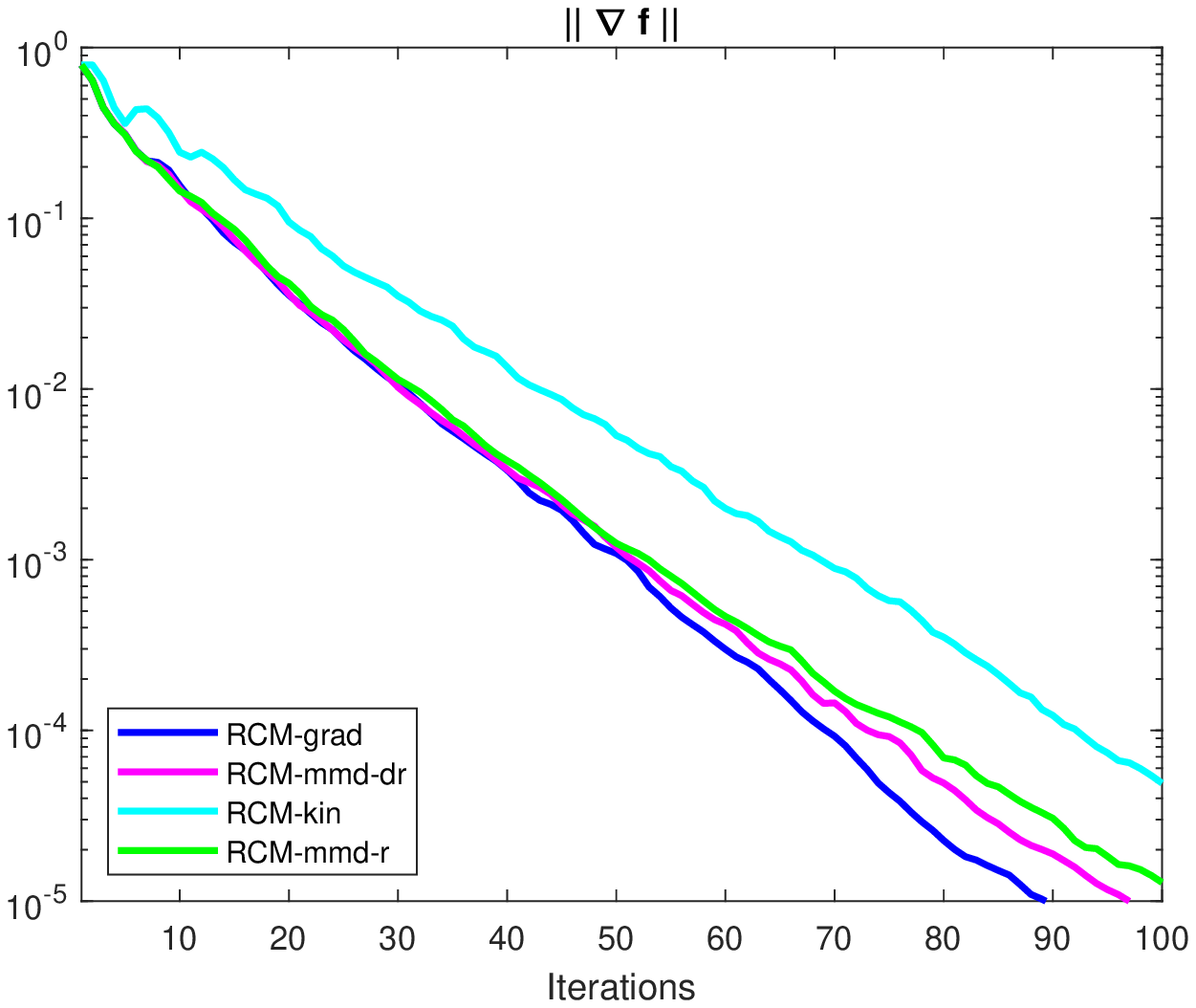}

\caption{LogSumExp function. 
At the top we report the result 
of a single experiment,
at the bottom the average over 50 repetitions of the
experiment.
The plots
at left-hand side shows
the decay of the norm of the gradient of the 
objective function achieved 
by RCM-grad (blue), RCM-mmd (magenta), 
NAG-C-restart (red), and
the classical gradient descent (black).
At right-hand side we compare the convergence rate
of RCM with different restart schemes. RCM-grad
and RCM-mmd-dr
 seem to be faster
than the benchmark NAG-C-restart.}
\label{fig:LogSumExp}
\end{figure}
\end{center}
\end{subsection}

We want to conclude this section with some
considerations about the non-smooth case. 
We try to give heuristic ideas 
to generalize our method to the
minimization of {\it composite functions}
(see, for example, \cite{N13} for an introduction
to the subject). Namely, we consider functions
$f:\R^n \to \R$ of the form
\[
f(x) = g(x) + \Psi(x),
\]
where $g:\R^n \to \R$ is a smooth convex function 
and $\Psi:\R^n \to \R$ is a Lipschitz-continuous
convex function. The main obstruction to the
direct application of RCM for the minimization
of $f$ is due to the fact that, in general,
the gradient $\nabla f$ may not be well-defined.
In order to avoid this inconvenient,
we introduce the map $\partial^-f:\R^n \to \R^n$
defined as follows:
\begin{equation} \label{eq:gen_grad}
\partial^-f(x) = \mbox{argmin} \left\{ 
|| v ||_2: \, v \in \partial f(x)
 \right\},
\end{equation}
where $\partial f(x) \subset \R^n$ is the 
sub-differential of $f$ at the point $x$, and
$|| \cdot ||_2$ denotes the Euclidean norm.
The good definition of the map $\partial^-f$
descends from general properties of convex
functions (see, for example, the textbooks 
\cite{HL}, \cite{R97}). Hence, the first modification
consists in replacing $\nabla f$ with 
$\partial^-f$.

The second modification to the original RCM
 is suggested by  
physical intuition. Let us imagine that a 
small massive ball subject to the gravity force
is constrained to move on the graph of the
function $f$. The graph is {\it sharp-shaped} in
correspondence of the non-differentiability 
points of the function $f$. 
If a physical ball crosses
these regions, we expect a loss of kinetic energy
due to the inelastic collision between the ball
and the sharp surface of the graph. 
Then, for example, we can reset the velocity
equal to zero whenever the sequence crosses a 
non-differentiability region.
This intuition can be motivated by the fact that
the quantity $\partial^-f$ usually has 
sudden variation in correspondence of
non-differentiability points of $f$.
Hence, when we cross these regions,
the information carried by the momentum
can be of little use, if not misleading.

From now on, we suppose that 
$\Psi : \R^n \to \R$ has the form:
\[
\Psi(x) = \sum_{i=1}^n|x_i|.
\]
For this choice of $\Psi$, we propose 
in Algorithm \ref{Algoritmo composite}
a variant of RCM. 
\begin{algorithm}
\caption{Restart-Conservative Method for $\ell	^1$-composite   optimization (RCM-COMP-grad)}
\begin{algorithmic}[1]
  \STATE $x \gets x_0$
  \STATE $v \gets 0$
  \WHILE{$i \leq max\_iter$}
  \STATE $x' \gets x -h^2 \partial^- f(x) +hv$
  \IF{$\partial^- f(x') \cdot v > 0$}
  \STATE $x' \gets x -h^2 \partial^- f(x)$
  \STATE $v \gets -h \partial^- f(x)$
  \ELSE
  \STATE $v \gets v -h \partial^- f(x')$
  \ENDIF
  \FOR{$j=1,\ldots , n$}
  \IF{$x'_j x_j<0$}
  \STATE $x'_j \gets 0$
  \STATE $v \gets 0$
  \ENDIF
  \ENDFOR
  \STATE $x \gets x'$
  \STATE $i \gets i+1$
  \ENDWHILE
\end{algorithmic}
\label{Algoritmo composite}
\end{algorithm}

\noindent
In the lines 11--16 of 
Algorithm~\ref{Algoritmo composite}
we check if the sequence has crossed
the set where the function $f$ is not
differentiable, i.e., the set 
$\{ x\in \R^n: \, x_1\cdots x_n =0 \}$.
If it has, we reset the velocity equal to $0$.
As done for RCM-grad, we can 
replace the gradient restart criterion at
line 5 with  the alternative restart procedures
described in 
Section~\ref{sec:discr}.
Similarly as before,
we call {RCM-COMP-kin}, {RCM-COMP-mmd-r}
and {RCM-COMP-mmd-dr} the methods obtained using
 the alternative restart criteria.
We just recall that, in the case of RCM-mmd-dr,
 in \eqref{eq:discr_mmd_n_2}
we need to replace $\nabla f (x_{k+1})$ with  
$\partial^- f (x_{k+1})$.

For the experiments concerning the 
$\ell^1$-composite optimization, we use
as benchmark
the restarted version 
of FISTA proposed in \cite{OC}:
as done for the NAG-C, in their paper
O'Donoghue and Cand\`es proposed an
adaptive restart procedure 
to accelerate the convergence of FISTA.
We refer to this algorithm as FISTA-restart.
We recall that FISTA
was originally introduced in \cite{BT}.

\begin{subsection}{Quadratic with 
$\ell^1$-regularization}
We considered the function $f:\R^n \to \R$ defined
as
\begin{equation}\label{eq:quad_l1}
f(x) = \frac12 x^TAx + b^Tx 
+ \gamma \sum_{i=1}^n|x_i|,
\end{equation}
where $A \in \R^{n\times n}$ and $b\in \R^n$
 were constructed as 
in Subsection~\ref{subsec:quad}.
We set $\gamma=\frac14 || b ||_{\infty}$,
in order to guarantee that the minimizer 
is not the origin. Let $\lambda_{\max}$ be
the greatest eigenvalue of $A$.
We minimized \eqref{eq:quad_l1} using the following
algorithms:
\begin{itemize}
\item FISTA with step-size $s=\frac{1}{\lambda_{\max}}$;
\item FISTA-restart with step-size $s=\frac{1}{\lambda_{\max}}$;
\item RCM-COMP-grad with step-size $h=\frac{1}{\sqrt{\lambda_{\max}}}$;
\item RCM-COMP-mmd-dr with step-size $h=\frac{1}{\sqrt{\lambda_{\max}}}$;
\item RCM-COMP-mmd-r with step-size $h=\frac{1}{\sqrt{\lambda_{\max}}}$;
\item RCM-COMP-kin with step-size $h=\frac{1}{\sqrt{\lambda_{\max}}}$.
\end{itemize}
The results are shown in Figure~\ref{fig:quad+l1}.
We measured the convergence rate by considering
the decay of $||\partial^- f||$ along the sequences
generated by the methods.
We observe that in this problem the most performing
algorithm is 
the benchmark FISTA-restart
and the worst is the original FISTA  without restart.  
Among the RCM algorithms  RCM-mmd-r and RCM-mmd-dr exhibit similar convergence rate  while RCM -kin  is the slowest.
Finally, RCM-COMP-grad shows an asymptotic
 convergence rate very similar to
 FISTA-restart.

\begin{center}
\begin{figure}
\includegraphics[width=5.9cm]{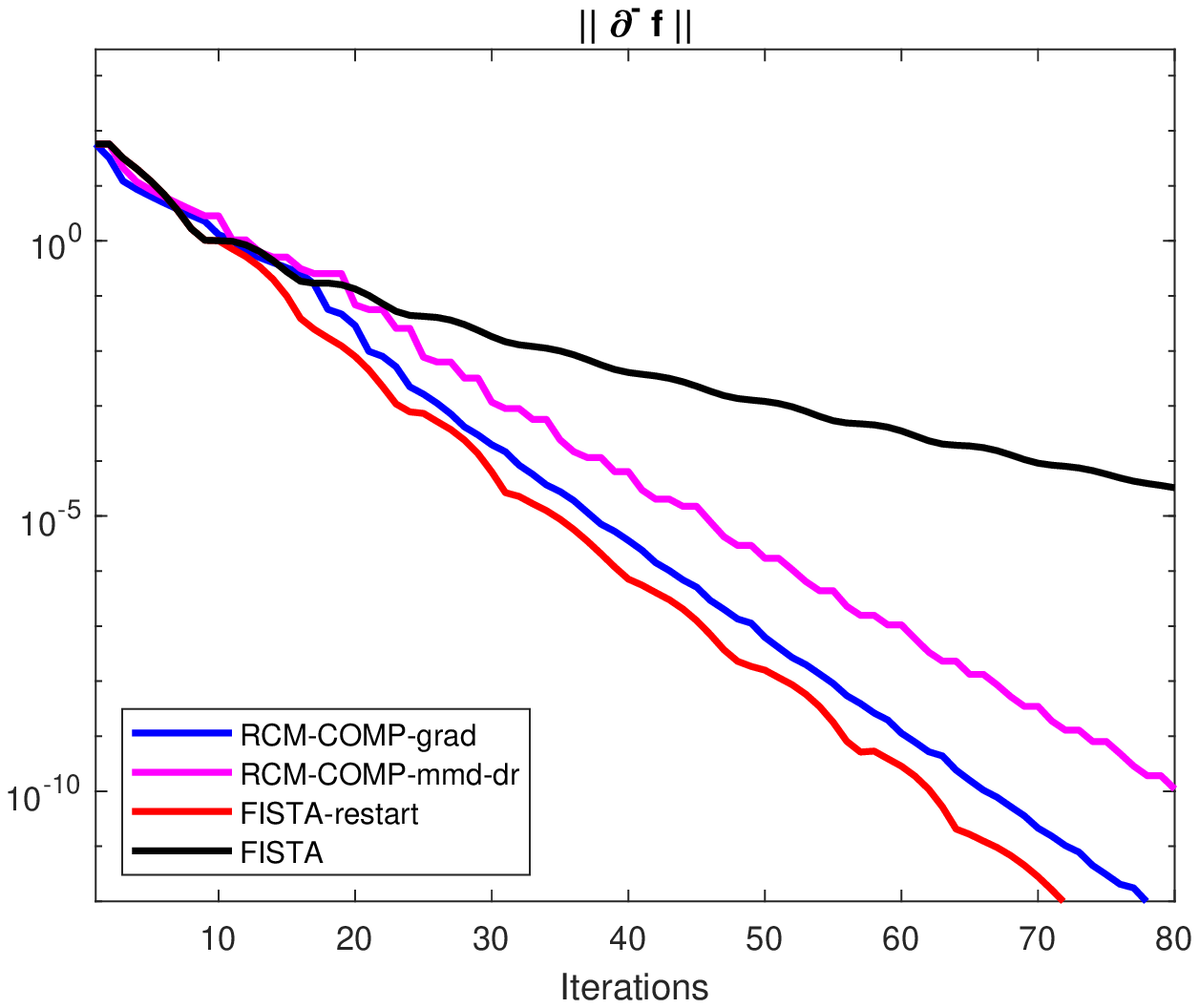}
\includegraphics[width=5.9cm]{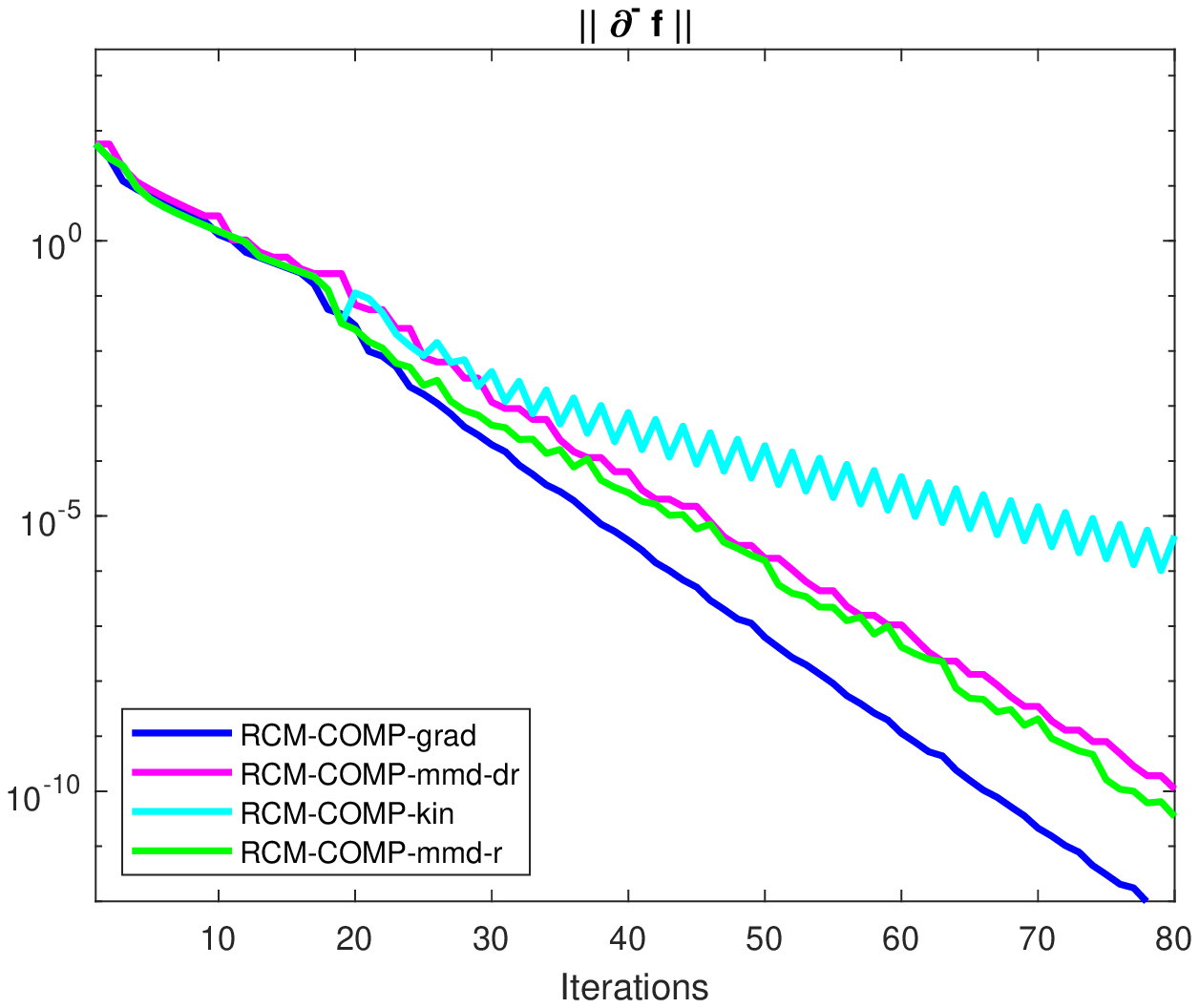}\\
\includegraphics[width=5.9cm]{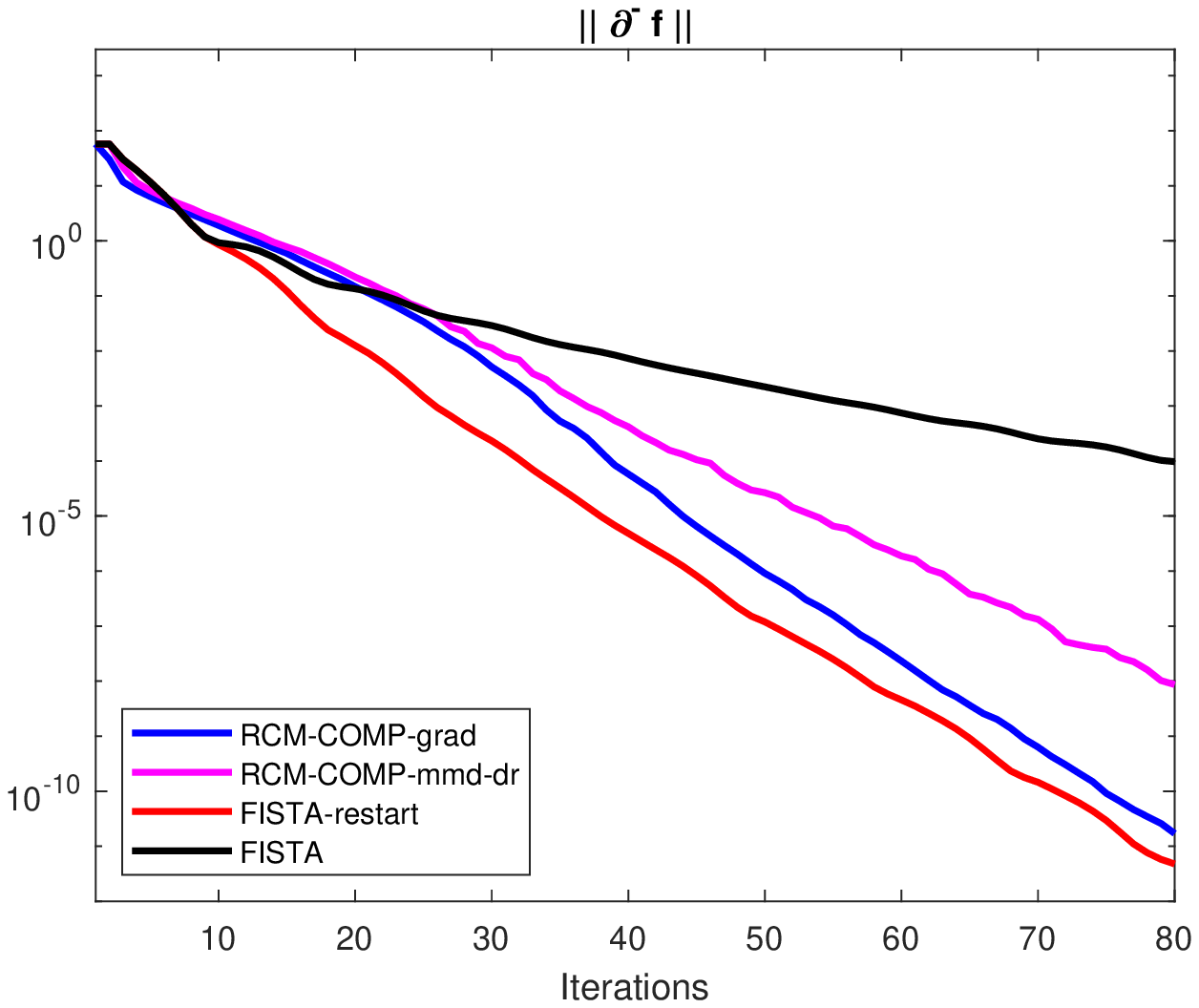}
\includegraphics[width=5.9cm]{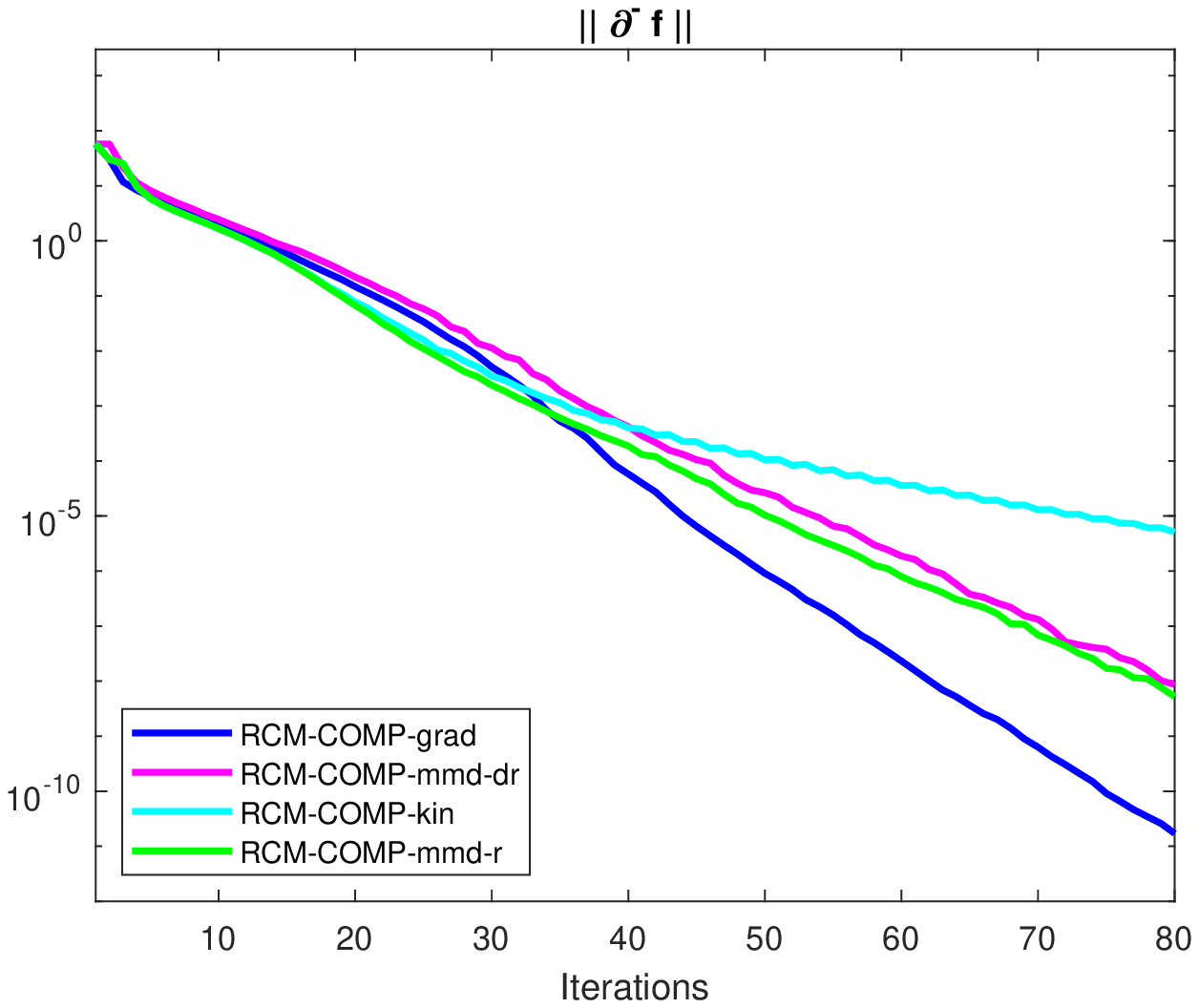}
\caption{Quadratic function 
with $\ell^1$-regularization.
At the top we report the result of a single experiment,
at the bottom the average over 
100 repetitions of the experiment.
The plots at left-hand side shows
the decay of $|| \partial^-f ||$
 achieved  by
RCM-COMP-grad (blue), RCM-COMP-mmd-dr (magenta), 
FISTA-restart (red), and
FISTA (black).
At right-hand side we compare the convergence rate
of the  Restart-Conservative method
 with different restart schemes. 
The benchmark method FISTA-restart has the best 
performances. Among the Restart-Conservative family,
RCM-COMP-grad shows the fastest convergence rate.}
\label{fig:quad+l1}
\end{figure}
\end{center}

\end{subsection}

\begin{subsection}{Logistic with 
$\ell^1$-regularization}
We considered the function $g:\R^n \to \R$ defined
as
\begin{equation*}
g(x) = \sum_{i=1}^m
\left( 
(1-y_i) a_i^Tx + \log \left( 1+ e^{-a_i^Tx}  
\right)
\right),
\end{equation*}
where $A = (a_1,\ldots,a_m) \in \R^{n\times m}$ and 
$y=(y_1,\ldots,y_m)^T \in \R^m$ 
 were constructed as 
in Subsection~\ref{subsec:logistic}.
We studied the function $f:\R^n \to \R$ defined as
\begin{equation}\label{eq:logistic+l1}
f(x) =  g(x) + \gamma \sum_{i=1}^n|x_i|.
\end{equation}
We set $\gamma=\frac12 || \nabla g(0) ||_{\infty}$,
in order to guarantee that the minimizer of $f$ 
is not the origin. 
Let ${ L}$ be the Lipschitz constant
of the function 
$\nabla g$.
We minimized \eqref{eq:logistic+l1}
 using the following algorithms:
\begin{itemize}
\item FISTA with step-size $s=\frac{1}{{ L}}$;
\item FISTA-restart with step-size $s=\frac{1}{{ L}}$;
\item RCM-COMP-grad with step-size $h=\frac{1}{\sqrt{{ L}}}$;
\item RCM-COMP-mmd-dr with step-size $h=\frac{1}{\sqrt{{ L}}}$;
\item RCM-COMP-mmd-r with step-size $h=\frac{1}{\sqrt{{ L}}}$;
\item RCM-COMP-kin with step-size $h=\frac{1}{\sqrt{{ L}}}$.
\end{itemize}
The results are shown in 
Figure~\ref{fig:logistic+l1}.
We measured the convergence rate by considering
the decay of $||\partial^- f||$ along the sequences
generated by the methods.
We recall that in this test  the smooth part of the objective function
 is a non-strongly convex function.  
We observe   that 
RCM-COMP-grad and   RCM-COMP-mmd-r 
exhibit the best  performances    while the original 
FISTA  is the worst performing method.
In this case  RCM-COMP-mmd-dr
shows on average performances very
close to the benchmark FISTA-restart and both
 exhibit the  same convergence rate.
\begin{center}
\begin{figure}
\includegraphics[width=5.9cm]{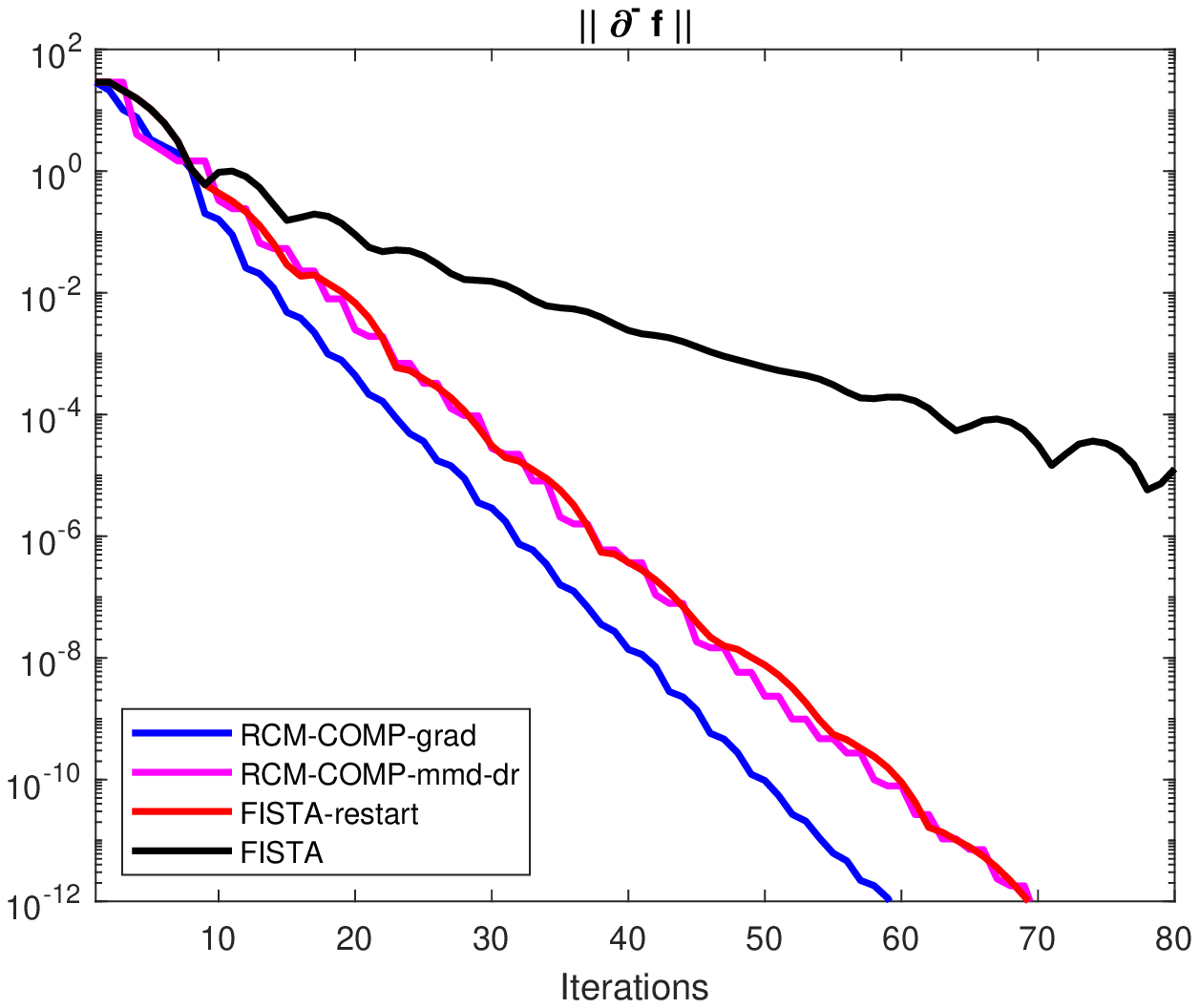}
\includegraphics[width=5.9cm]{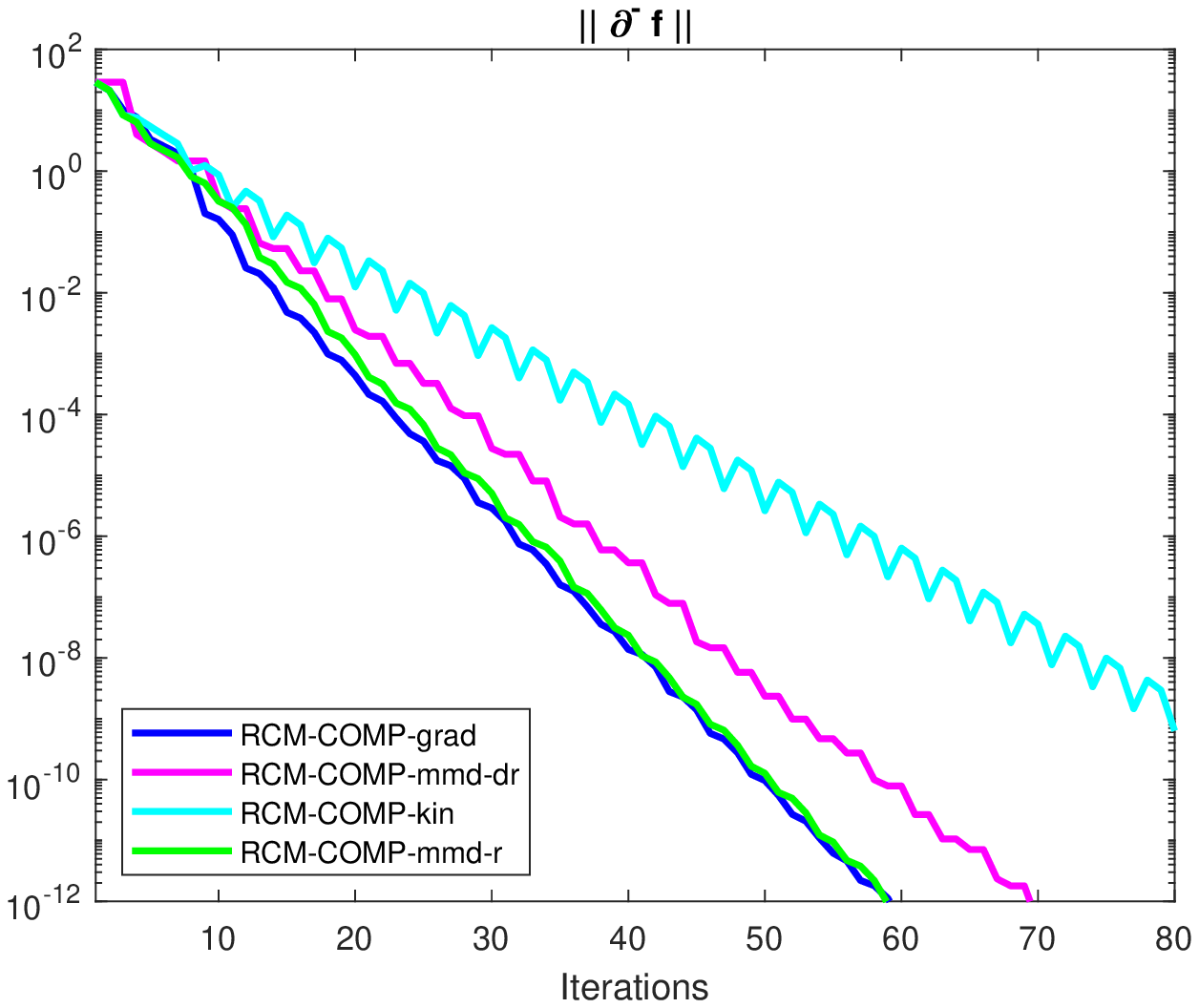}\\
\includegraphics[width=5.9cm]{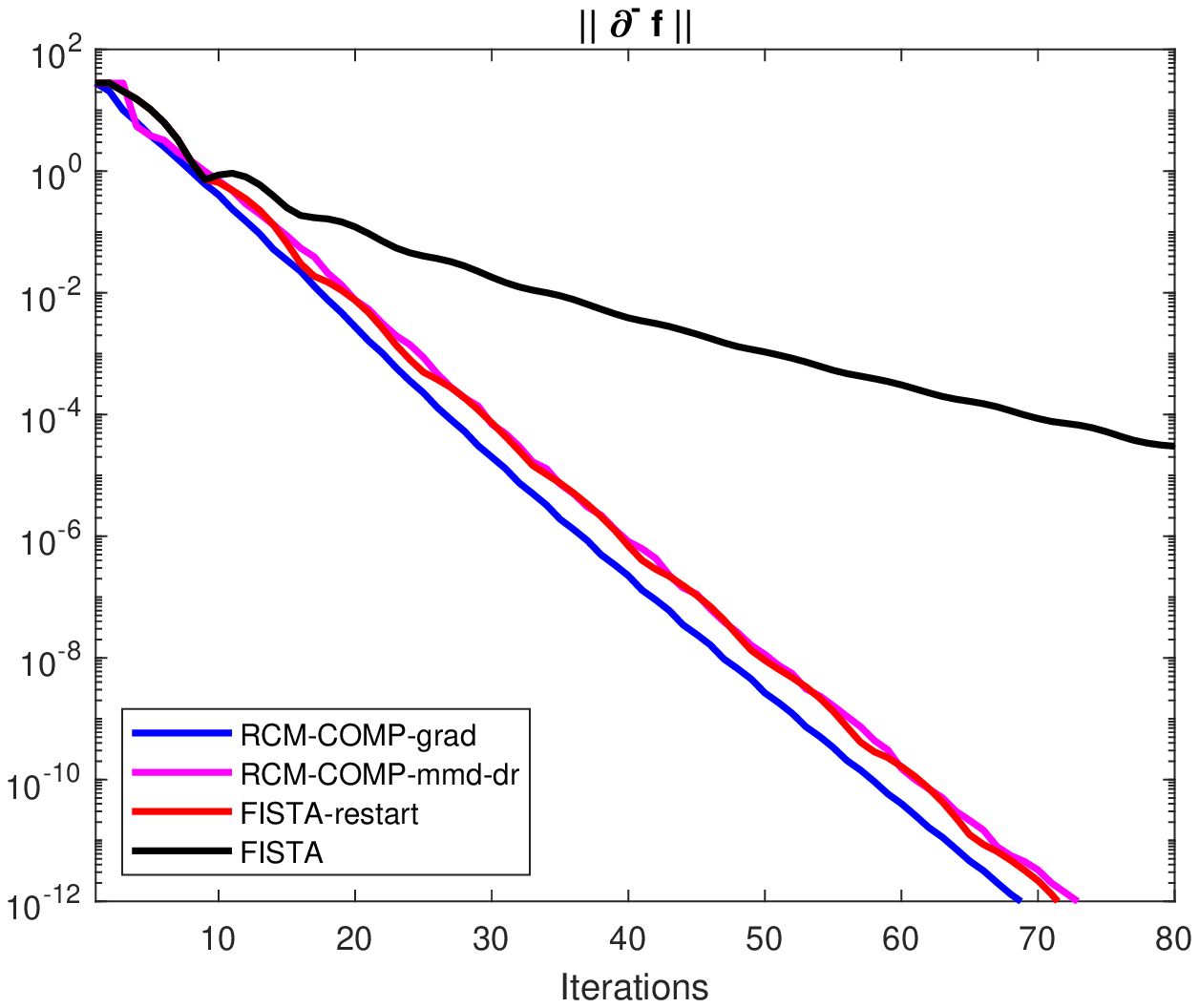}
\includegraphics[width=5.9cm]{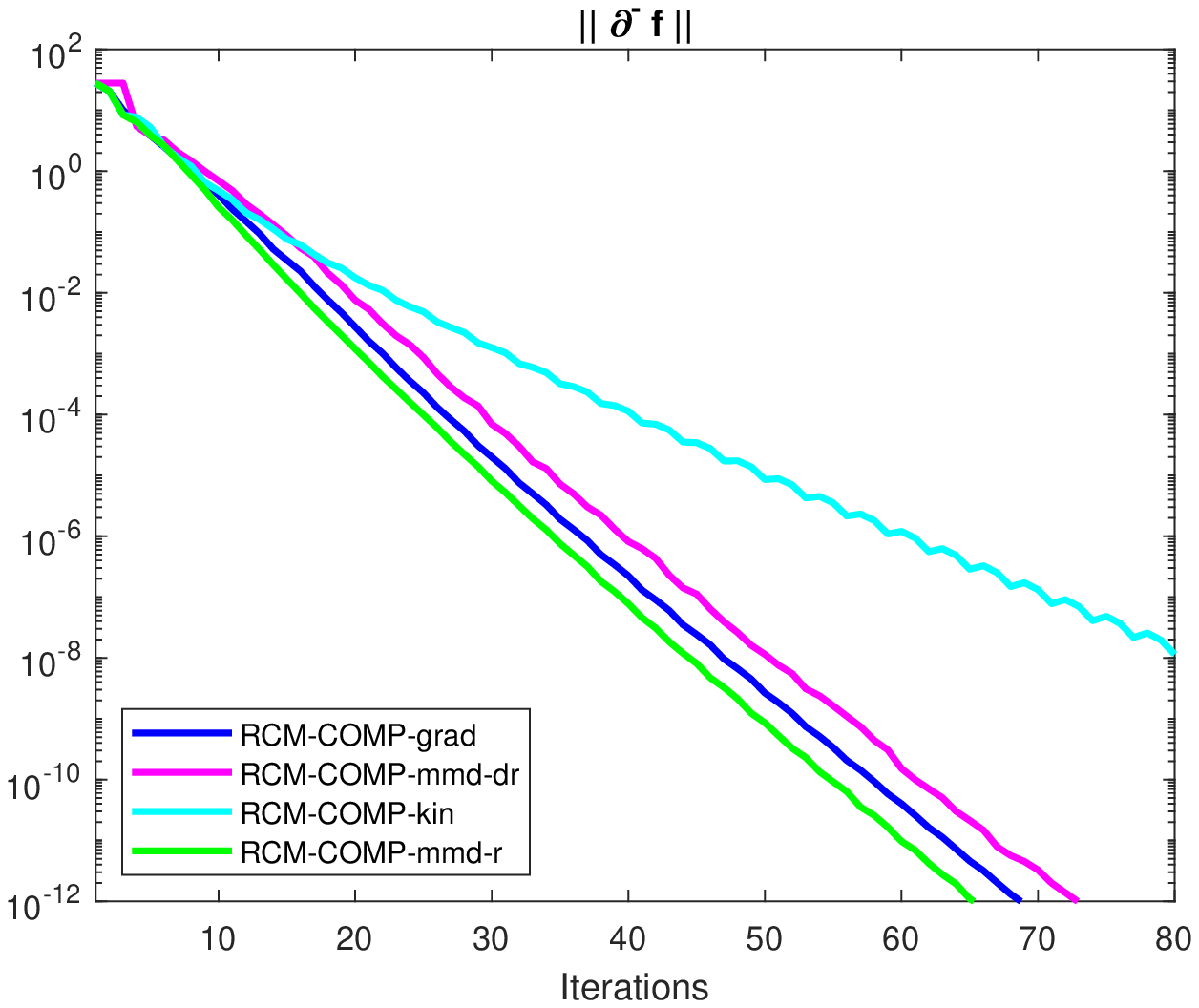}
\caption{Logistic with $\ell^1$-regularization.
At the top we report the result of a single experiment,
at the bottom the average over 100
 repetitions of the experiment.
The plots at left-hand side shows
the decay of $|| \partial^-f ||$
achieved  by RCM-COMP-grad (blue),
RCM-COMP-mmd-dr (magenta), FISTA-restart (red),
and FISTA (black).
At right-hand side we compare the convergence rate
of the  Restart-Conservative method
 with different restart schemes.
RCM-COMP-grad and RCM-COMP-mmd-r (green)
show better convergence rate than the benchmark
FISTA-restart.}
\label{fig:logistic+l1}
\end{figure}
\end{center}
\end{subsection}

\begin{subsection}{LogSumExp with 
$\ell^1$-regularization}
We considered the function $g:\R^n \to \R$ defined
as
\begin{equation*}
g(x) = \rho \log \left(
\sum_{i=1}^m\exp \left( 
\frac{a_i^Tx - b_i }{\rho}\right)  
\right),
\end{equation*}
where $A = (a_1,\ldots,a_m) \in \R^{n\times m}$ and 
$b \in \R^m$ 
 were constructed as 
in Subsection~\ref{subsec:logsumexp}. We set
$\rho =1$.
We studied the function $f:\R^n \to \R$ defined as
\begin{equation}\label{eq:logsumexp+l1}
f(x) =  g(x) + \gamma \sum_{i=1}^n|x_i|.
\end{equation}
We set $\gamma=\frac12 || \nabla g(0) ||_{\infty}$,
in order to guarantee that the minimizer of $f$ 
is not the origin. 
Let ${ L}$ be the Lipschitz constant of the function 
$\nabla g$.
We minimized \eqref{eq:logistic+l1}
 using the following algorithms:
\begin{itemize}
\item FISTA with step-size $s=\frac{1}{{ L}}$;
\item FISTA-restart with step-size $s=\frac{1}{{ L}}$;
\item RCM-COMP-grad with step-size $h=\frac{1}{\sqrt{{ L}}}$;
\item RCM-COMP-mmd-dr with step-size $h=\frac{1}{\sqrt{{ L}}}$;
\item RCM-COMP-mmd-r with step-size $h=\frac{1}{\sqrt{{ L}}}$;
\item RCM-COMP-kin with step-size $h=\frac{1}{\sqrt{{ L}}}$.
\end{itemize}
The results are shown in 
Figure~\ref{fig:logsumexp+l1}.
We measured the convergence rate by considering
the decay of $||\partial^- f||$ along the sequences
generated by the methods.
We observe that 
RCM-COMP-grad
is  the most performing method.
Moreover on average, all the other RCM methods
show performances very close to
the benchmark FISTA-restarted. 
\begin{center}
\begin{figure}
\includegraphics[width=5.9cm]{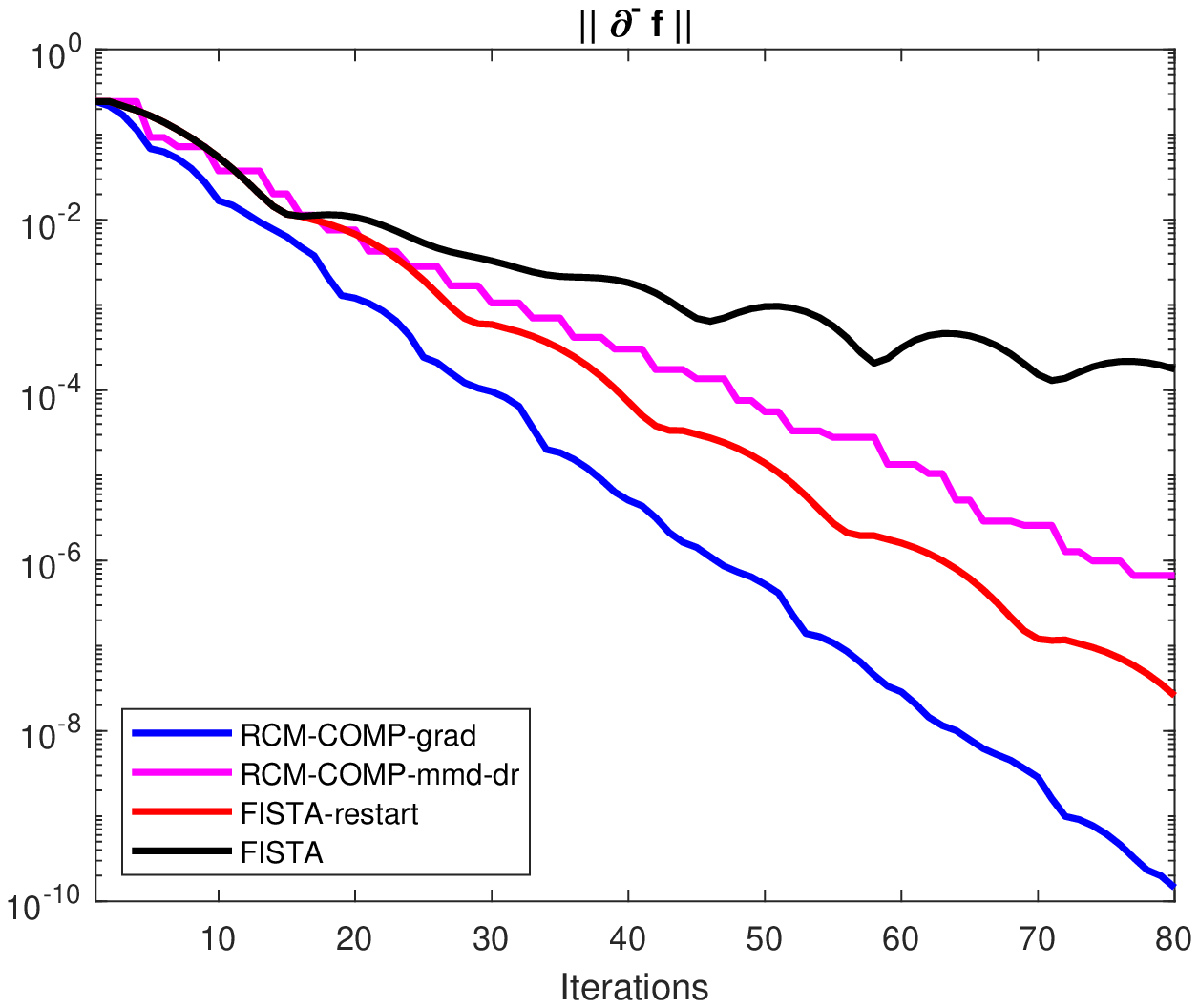}
\includegraphics[width=5.9cm]{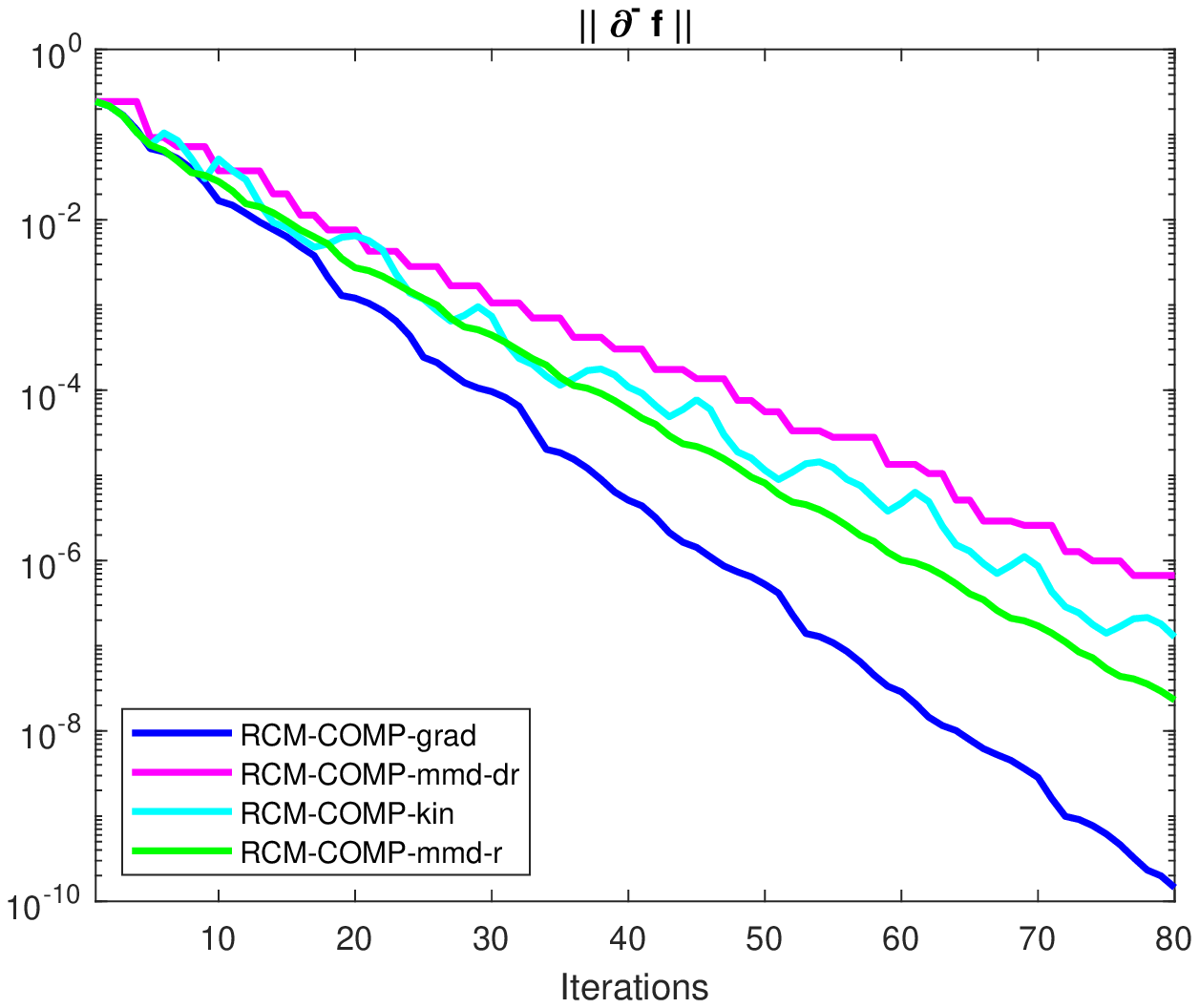}\\
\includegraphics[width=5.9cm]{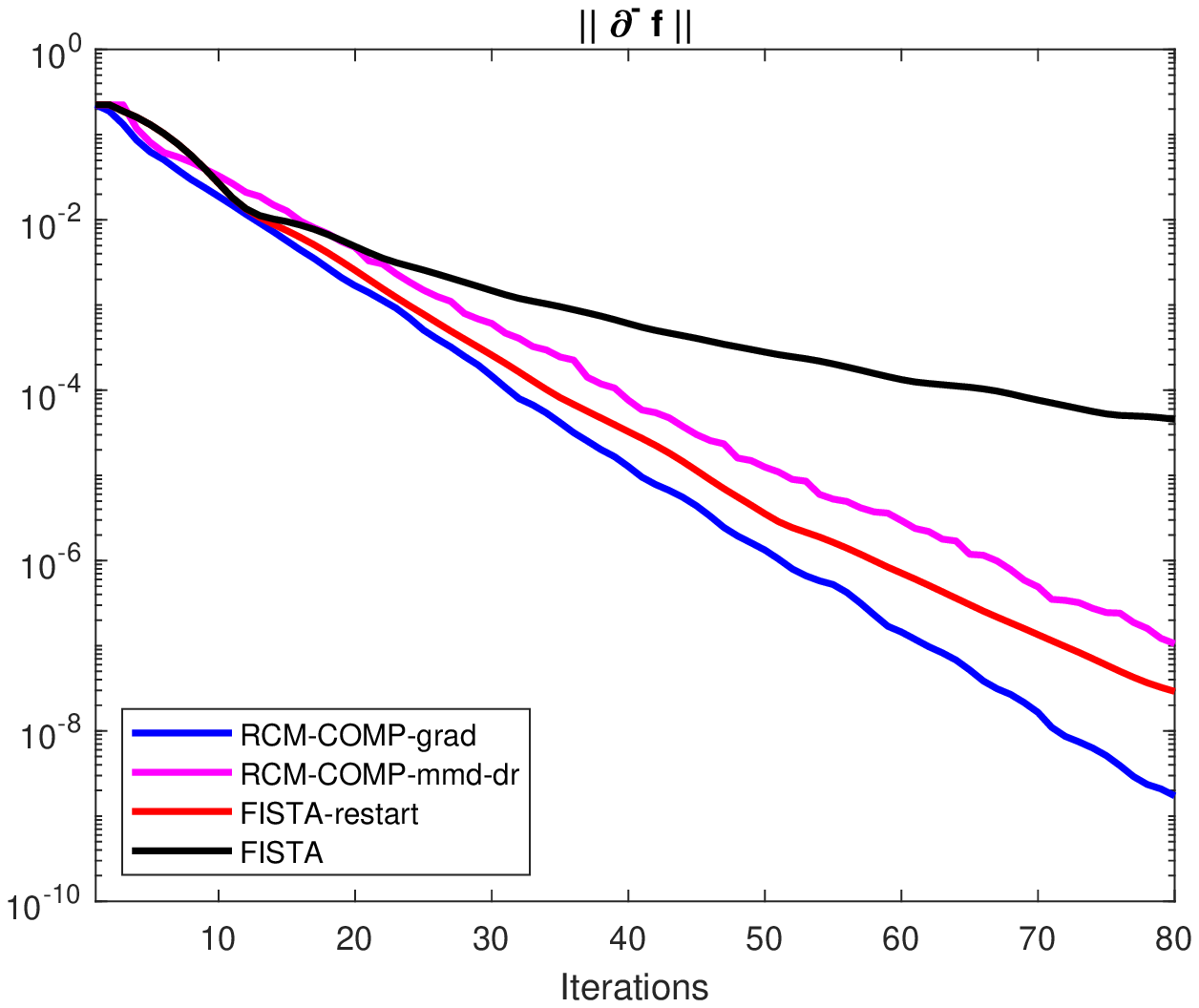}
\includegraphics[width=5.9cm]{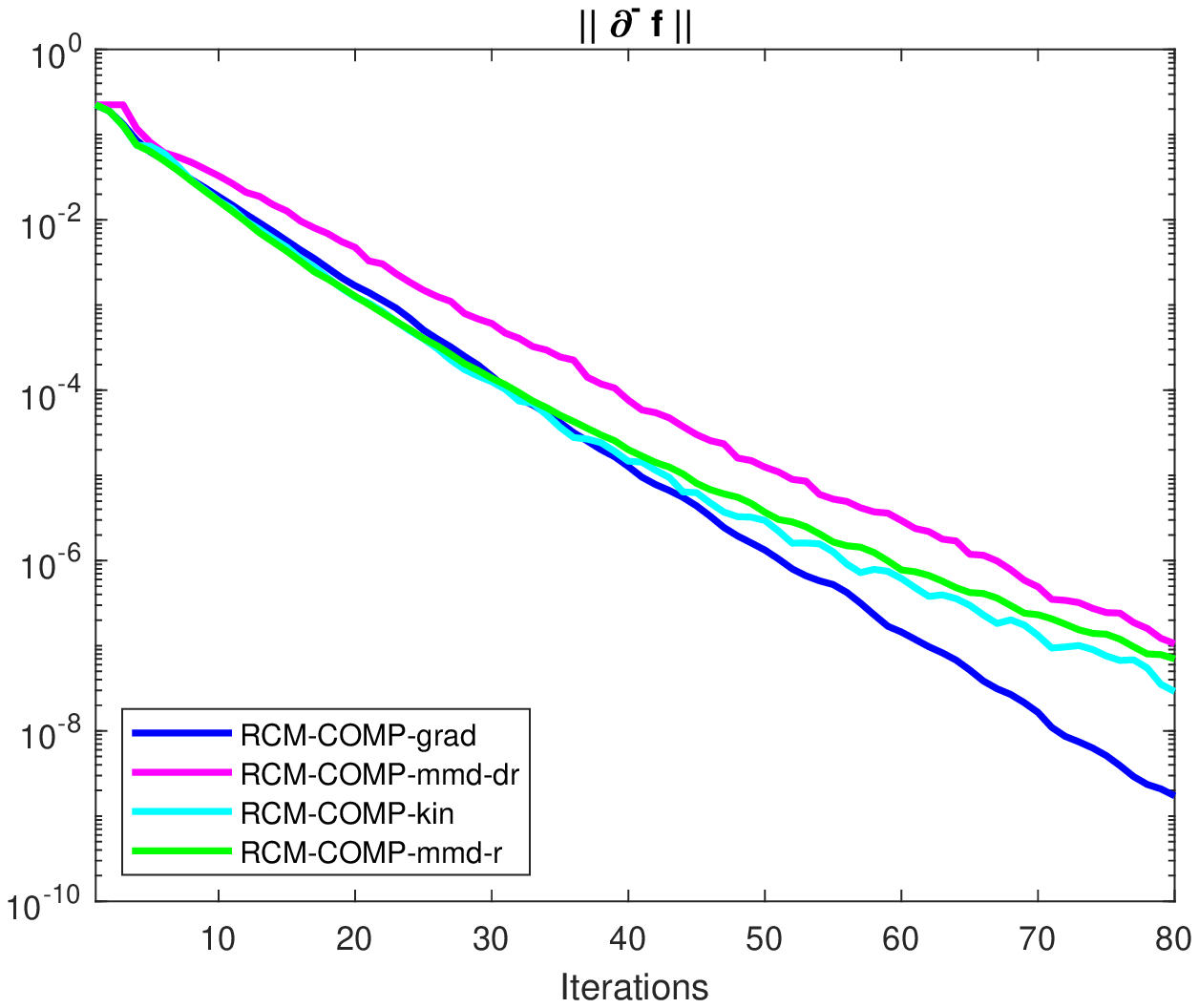}
\caption{LogSumExp with $\ell^1$-regularization.
At the top we report the result of a single experiment,
at the bottom the average over 100
 repetitions of the experiment.
The plots
at left-hand side shows
the decay of $|| \partial^-f ||$
 achieved 
by RCM-COMP-grad (blue), RCM-COMP-mmd-dr (magenta), 
FISTA-restart (red), and
FISTA (black).
At right-hand side we compare the convergence rate
of the  Restart-Conservative method
 with different restart schemes.
 RCM-COMP-grad
is the most performing and it
 shows better convergence rate than the benchmark
FISTA-restart.}
\label{fig:logsumexp+l1}
\end{figure}
\end{center}
\end{subsection}
\end{section}

\begin{section}{Conclusions}

In a series of recent works (see, e.g.,
\cite{SBC1}, \cite{SBC}, 
\cite{A16}, \cite{SJ18}, \cite{A18}, \cite{SJ19}) the
connection between ODEs with suitable
friction term and {discrete
optimization} algorithms with
suitable momentum term has been investigated
from both theoretical and computational point
of view. {Conversely, in the
present work we investigate optimization method 
derived from a conservative ODE (i.e., without 
friction) with suitable restarting
criteria.}

{In the first part of the paper we propose a continuous-time
optimization method for convex functions starting from a conservative ODE coupled with an original restart procedure based on the
mean dissipation of the kinetic energy. Here, our main contribution
consists in proving a linear convergence result.
Indeed, with our stopping
procedure we can prove the 
boundedness 
of the restart time, and therefore we can strengthen
the linear convergence theorem of \cite{TPL},
where the authors assumed the boundedness in the
hypotheses.
As well as in \cite{SBC}, where a restarted 
dissipative dynamics was investigated, our
continuous-time method does not require an estimate
of the strong convexity parameter.
}

In the second part,
a discrete algorithm is derived 
(Restart-Conservative Method, RCM) and
various discrete restart criteria are 
considered{, some of them proposed in
\cite{SI} and \cite{TPL}.
A new contribution is the qualitative global 
convergence result for
RCM with gradient restart (RCM-grad).
To the best of our knowledge, no global convergence
result was available for optimization methods
obtained by the discretization of the conservative
dynamics.}

The numerical tests show that the Restart 
Conservative methods can effectively compete 
with the most performing existing algorithms.
We used as benchmark the restarted versions
of NAG-C and FISTA proposed in \cite{OC}.
{We recall that these methods do
not make use of the constant of strong convexity
of the objective, and they are suitable both
for strongly and non-strongly convex optimization.}
{In the smooth case, in the experiments 
with non-strongly convex functions,
RCM-grad and RCM-mmd-dr have similar 
performances and they both show
a faster convergence rate than NAG-C-restart (see 
Figures~\ref{fig:Logistic},~\ref{fig:LogSumExp}).
In the non-smooth case, when minimizing
a non-strongly convex function with 
$\ell^1$-regularization, the experiments
show that RCM-COMP-grad outperforms FISTA-restart.
Moreover, RCM-COMP-mmd-dr shows 
performances similar to FISTA-restart (see 
Figures~\ref{fig:logistic+l1},~\ref{fig:logsumexp+l1}).}
  
\end{section}

\subsection*{Acknowledgments}
The Authors want to thank
 Prof. G. Savar\'e for the helpful
suggestions and discussions.

\appendix
\section{Proof of Proposition~\ref{prop:1d_finite}}
\label{App_dim_1}
\begin{proof}
Let us prove that the function $t\mapsto E_K(t)$ has
a local maximum in $[0,+\infty)$.
By contradiction, if $t\mapsto E_K(t)$ has
no local maxima, then $t\mapsto E_K(t)$ is 
injective (otherwise we can apply twice Weierstrass 
Theorem and we can find a local maximum).
Since $t\mapsto E_K(t)$ is continuous, it has to
be strictly increasing.
This implies that $t \mapsto \dot x(t)$ can not
change sign and hence that it is monotone as well.
Moreover, it follows that $t \mapsto x(t)$ is
monotone as well.
Since both $x(t)$ and $\dot x(t)$ remain bounded
for every $t \in [0, +\infty)$, there exist 
$x_\infty, v_\infty \in \R$ such that
\[
\lim_{t \to +\infty}x(t) = x_{\infty} \,\,\,\,\,
\mbox{and }\,\, 
\lim_{t \to +\infty}\dot x(t) = v_{\infty}.
\]
On the other hand, $v_{\infty}$ should be zero, and
this is a contradiction.

Let $\bar t$ be a point of local maximum for
the kinetic energy function $t \mapsto E_K(t)$. 
This implies that  $|\dot{x}(\bar t)| > 0$.
The conservation of the total mechanical energy ensures that
the function $t\mapsto f(x(t))$ attains a local minimum at $\bar t$.
Using the Implicit Function Theorem, we obtain
that $t \mapsto x(t)$ is a local homeomorphism 
around $\bar t$.
This implies that $x(\bar t)$ is a point of local
minimum for $f$.   
\end{proof}

\section{Proof of Proposition \ref{prop:est_time_1d}}
\label{App_2dim}
\begin{proof}
Without loss of generality, we can assume that $x^*=0$ and that $x_0>0$.
We define a strongly convex function $g: \R \mapsto \R$ as follows:
\[
g(x) := {\frac12 \mu |x-x^*|^2=}
\frac12 \mu |x|^2.
\]
We claim that, for every $y \in [0,x_0]$, the following inequality is satisfied:
\begin{equation} \label{eq:comp_pot_1d}
f(x_0) - f(y) \geq g(x_0) - g(y).
\end{equation}
Indeed, we have that
{
\begin{align*}
f(x_0) - g(x_0)& = f(y) - g(y) + 
\int_y ^{x_0}(f'(u) - g'(u))\,du
\geq f(y) - g(y),
\end{align*}
since $f'(u) - g'(u)\geq 0$ for every $u\geq 0$.}
Combining \eqref{eq:time_1d} and \eqref{eq:comp_pot_1d} we {obtain}:
\begin{align*}
t_1  =  \int_0^{x_0}\frac{1}{\sqrt{2(f(x_0)-f(y))}}dy 
 \leq & \int_0^{x_0}\frac{1}{\sqrt{2(g(x_0)-g(y))}}dy\\
& = \int_0^{x_0}\frac{1}{\sqrt{\mu(x_0^2- y^2)}}dy 
 = \frac{\pi}{2\sqrt{\mu}}.
\end{align*}
This completes the proof.  
\end{proof}

\section{Proof of Lemma \ref{lem:est_decr} }

\begin{proof}
Up to a linear orthonormal change of coordinates, we can assume that the
function $f$ is of the form
\[
f(x) = \sum_{i=1}^n { \lambda}_i \frac{x_i^2}{2}.
\]
Hence, the differential system \eqref{eq:Cau_prob_nd} becomes
\[
\begin{cases}
\ddot{x}_1 + { \lambda}_1 x_1 = 0, \\
\vdots \\
\ddot{x}_n + { \lambda}_n x_n = 0,
\end{cases}
\]
i.e., the components evolve independently one of each other.
If the Cauchy datum is
\[
x(0) = (x_{1,0}, \, \ldots \, x_{n,0} ) \,\,\, \mbox{and }\,\, \dot{x}(0)=0,
\]
then we can compute the expression of
the kinetic energy function $E_K: t\mapsto \frac12 |\dot{x}(t)|^2$:
\begin{equation*}
E_K(t) = \sum_{i=1}^n { \lambda}_i
\frac{x_{i,0}^2}{2} {\sin^2(
\sqrt{{ \lambda}_i} t)}.
\end{equation*}
For every $0 \leq t\leq \frac{\pi}{2\sqrt{{ \lambda}_n}}$, we have that
\[
0 \leq \sin( \sqrt{{ \lambda}_1} t )\leq \ldots \leq \sin( \sqrt{{ \lambda}_n} t ),
\]
and then we deduce that
\begin{equation*}
E_K(t) \geq \left( \sum_{i=1}^n { \lambda}_i\frac{x_{i,0}^2}{2} \right) {\sin^2(\sqrt{{ \lambda}_1} t)},
\end{equation*}
for every $t \in [ 0, {\pi}/{(2\sqrt{{ \lambda}_n})} ] $. Evaluating the last inequality for
$t = \frac{\pi}{2\sqrt{{ \lambda}_n}}$ and using the conservation of the energy, we obtain the
thesis.    
\end{proof}


\end{document}